\def\vs{\vskip .6cm}
\def\ra{\rightarrow}
\def\nb{\nabla}
\def\G{\Gamma}
\def\a{\alpha}
\def\c{\gamma}
\def\e{\varepsilon}
\def\f{\varphi}
\def\k{\kappa}
\def\l{\lambda}
\def\n{\nabla}
\def\o{\omega}
\def\s{\sigma}
\def\t{\tilde}
\def\tl{\tilde}
\def\beq{\begin{equation}}
\def\eeq{\end{equation}}
\def\bi{\begin{enumerate}}
\def\ei{\end{enumerate}}
\def\bea{\begin{eqnarray*}}
\def\eea{\end{eqnarray*}}
\def\ba{\begin{array}}
\def\ea{\end{array}}
\def\r{\end{proof}}
\def\I{\mathcal{I}}
\def\A{\mathcal{A}}
\def\B{\mathcal{B}}
\def\U{\mathcal{U}}
\def\ll{\mathcal{L}}
\def\sm{\smallsetminus}
\def \R{\mathbb{R}}
\def \RM{\mathbb{R}}
\def \Q{\mathbb{Q}}
\def \N{\mathbb{N}}
\def \Z{\mathbb{Z}}
\def \C{\mathbb{C}}
\def\d{{\partial}}
\def\Ric{\mathrm{Ric}}
\def\id{\mathrm{Id}}
\def\be{\begin{equation}}
\def\ee{\end{equation}}
\def\SO{\mathrm{SO}}
\def\Sp{\mathrm{Sp}}
\def\SU{\mathrm{SU}}
\def\UU{\mathrm{U}}
\def\Spin{\mathrm{Spin}}
\def\Hol{\mathrm{Hol}}
\def\mm{M_0}
\def\mmm{\widehat{\mm}}
\def\dt{\delta}
\newtheorem{epr}{Proposition}[section]
\newtheorem{prop}[epr]{Proposition}
\newtheorem{ath}[epr]{Theorem}
\newtheorem{elem}[epr]{Lemma}
\newtheorem{ecor}[epr]{Corollary}
\newtheorem{conj}[epr]{Conjecture}
\theoremstyle{definition}
\newtheorem{defi}[epr]{Definition}
\newtheorem{ede}[epr]{Definition}
\newtheorem{ere}[epr]{Remark}
\newtheorem{ex}[epr]{Example}
\def\obs{\begin{ere}}
\def\eobs{\end{ere}}
\def\bl\begin{elem}
\def\el\end{elem}
\def\bdf\begin{ede}
\def\edf\end{ede}
\title{On the irreducibility of locally metric connections}
\author{Florin Belgun, Andrei Moroianu}
\address{Florin Belgun\\Fachbereich Mathematik\\
Bereich AD\\
Bundesstr. 55 (Geomatikum)\\
20146 Hamburg, Germany}
 \email{florin.belgun@math.uni-hamburg.de}
\address{Andrei Moroianu \\ Universit\'e de Versailles-St Quentin \\
Laboratoire de Math\'ema\-tiques \\ UMR 8100 du CNRS\\
45 avenue des \'Etats-Unis\\
78035 Versailles, France }
\email{andrei.moroianu@math.cnrs.fr}
\thanks{This work was partially supported by the ANR-10-BLAN 0105 grant of the
Agence Nationale de la Recherche.}
\begin{document}

\begin{abstract}
A locally metric connection on a smooth manifold $M$ is a torsion-free
connection $D$ on $TM$ with 
compact restricted holonomy group $\Hol_0(D)$. 
If the holonomy representation of such a connection is irreducible,
then $D$ preserves a conformal structure on $M$. Under some natural
geometric assumption on the life-time of incomplete geodesics, we
prove that conversely, a locally metric connection $D$ preserving a
conformal structure on a compact manifold $M$ has irreducible holonomy
representation, unless $\Hol_0(D)=0$ or $D$ is the
Levi-Civita connection of a Riemannian metric
on $M$. This result generalizes Gallot's theorem on the
irreducibility of Riemannian cones to a much wider class of connections. As
an application, we give the geometric description of compact conformal manifolds carrying a tame
closed Weyl connection with non-generic holonomy. 
 
%
%
  \bigskip

\noindent
2010 {\it Mathematics Subject Classification}: Primary 53A30, 53C05,
53C29.

\medskip
\noindent{\it Keywords:} closed Weyl connections, reducible holonomy, Riemannian cones.
\end{abstract}

\maketitle

\vs
\section{Introduction}

The restricted holonomy group of the Levi-Civita
connection of a Riemannian manifold $(M,g)$ is (conjugate to) a closed
subgroup of $\SO(n)$, and thus compact. This property actually
characterizes locally the Levi-Civita connections: If $D$ is a
torsion-free connection on some manifold $M$ with compact restricted
holonomy group $\Hol_0(D)$, then every point $x$ of $M$ has a neighborhood
with a Riemannian metric on it whose Levi-Civita connection is
$D$. 
This explains the following:
\begin{ede}
A {\em locally metric connection} on a smooth manifold $M$ is a
torsion-free connection $D$ on $TM$ with compact restricted holonomy
group $\Hol_0(D)$. 
\end{ede}

A typical example of locally (but not globally) metric connection is
the following:  

\begin{ex}\label{ex1} Let $(N,g^N)$ be
a Riemannian manifold, $\lambda>1$ a real number and consider the
product $\bar N:=\R^*_+\times N.$ The group $\Gamma$ generated by the
dilation $\gamma:\bar N\to \bar N$, $\gamma(t,x)=(\lambda t,x)$
consists of strict homotheties of the cone metric $\bar
g:=dt^2+t^2g^N$ on $\bar N$, so the Levi-Civita connection of $\bar g$
induces a locally metric connection $D$ on $M:=\bar N/\Gamma=S^1\times
N$ which is not globally metric (see also Example \ref{leit} below). 
\end{ex}

In this paper we study the reducibility question for locally metric
connections. Let us first remark that an irreducible locally metric
connection $D$ always preserves a conformal structure on $M$. Indeed,
the lift $\tilde D$ of $D$ to the universal cover $\tilde M$ of $M$ has compact holonomy group 
$\Hol(\tilde D)=\Hol_0(D)$, thus is the Levi-Civita connection of a Riemannian metric $\tilde g$ on $\tilde M$.
Moreover, the fundamental group $\Gamma$ of $M$ acts on $\tilde M$ by $\tilde D$-affine transformations, hence by
homotheties (for every $\gamma\in\Gamma$ the metric $\gamma^*g$ is $\tilde D$-parallel, thus homothetic to $g$ by the irreducibility hypothesis).
Consequently, the conformal structure $[\tilde g]$ defined by $\tilde g$ is $\Gamma$-invariant, so it defines a $D$-parallel conformal structure on $M$.

Conversely, one can ask whether a locally metric connection $D$
preserving a conformal structure on $M$ is necessarily irreducible. The
answer is negative in general, as shown by the following examples:
\begin{itemize}
 \item[(a)] $D$ is the Levi-Civita connection of a product of
   Riemannian manifolds; 
 \item[(b)] $M$ is the quotient of $\RM^n\setminus\{0\}$ by the group
   $\Gamma$ generated by multiplication with some $\lambda>1$ and $D$
   is the connection induced on $(\RM^n\setminus\{0\})/\Gamma$ by the
   Levi-Civita connection of the flat metric of $\RM^n\setminus\{0\}$;
\item[(c)] $D$ is obtained like in Example \ref{ex1} from the product
  of two Riemannian cones (which is itself a Riemannian cone, see \cite{al}).
\end{itemize}

Remark now that in case (a) $D$ is globally metric, in case (b) $D$ is
flat, whereas in case (c) the manifold $M$ is non-compact. It turns
out that all known examples of reducible locally metric connections
preserving a conformal structure fall in one of the three cases
above. It is therefore tempting to make the following:

\begin{conj}\label{conj}
A locally metric conformal connection on a compact manifold $M$ which
is neither flat, nor globally metric, has irreducible holonomy. 
\end{conj}

In spite of its simplicity, this statement is still open in full
generality. Some evidence is provided by the beautiful theorem of Gallot
\cite{gal} concerning the irreducibility of Riemannian cone metrics,
which is the core of B\"ar's geometric description of compact manifolds with Killing
spinors \cite{baer93a}. Indeed, Gallot's theorem, which says that if $(N,g)$ is compact then the Riemannian cone $(\bar N, \bar g)$ is irreducible or flat, can be restated as follows: the locally metric connection $D$ on the compact manifold $M:=S^1\times N$ defined in Example \ref{ex1} is either flat or irreducible (note that, by construction, $D$ is not globally metric).

One of the main results of this paper is the following:
\begin{ath}\label{tm1}
Conjecture \ref{conj} holds provided that $D$ is {\em tame} ({\em cf.}
Definition \ref{tamme}).  \end{ath}

The tameness condition is related to the life-time of
incomplete geodesics of the connection. It is satisfied by the
connections covered by Gallot's theorem, by every connection
$C^1$-close to them ({\em cf.} Theorem \ref{geod} and Remark \ref{et}), and by large classes of locally metric connections defined on generalized cylinders ({\em cf.} Example \ref{gc}).  

We will now give a more detailed account of our results from the point
of view of conformal geometry. On a conformal manifold $(M,c)$, the
r\^ole of the Levi-Civita connection is played by the family of {\em Weyl
connections}, which are torsion-free connections on the
tangent bundle 
$TM$ preserving the conformal structure \cite{weyl}. 
Weyl connections can be either {\em closed}, or {\em exact}, 
({\em i.e.} locally, resp. globally equal to the Levi-Civita
connection of some Riemannian metric in the conformal class), or {\em
  non-closed}. In this framework, a locally metric connection
compatible with a conformal structure $c$ is nothing but a closed Weyl
connection on $(M,c)$.

As a consequence of the Merkulov-Schwachh\"ofer classification of 
groups occurring as holonomy of torsion-free connections
\cite{schwach}, the holonomy group of every non-closed {\em
irreducible} Weyl connection is the full
conformal group in dimensions other than 4. In \cite{af}
we show that the reducible case is very interesting and, so far,
little understood: The holonomy reduction defines locally a {\em
conformal product} structure, and the holonomy group, although included in a
product group, is not necessarily a product itself. 
In short, the restricted holonomy
of a non-closed Weyl connection is either trivial, the full conformal
group, some special groups in dimension 4, or it is reducible (in
which case no complete description exists yet).

In contrast to that, the restricted holonomy of a closed Weyl connection 
is always a Riemannian holonomy (see Remark \ref{cor} below). 
However, not every Riemannian holonomy group occurs as holonomy of 
a closed, non-exact Weyl connection. More precisely, we show in
Section 6.1 that the locally symmetric case and the quaternion
K\"ahler holonomy $\Sp_k\cdot\Sp_1$ do not occur, while all other
irreducible holonomy groups in the Berger list occur as restricted
holonomy groups of closed, non-exact Weyl connections. Moreover, one
can even realize them on compact manifolds, by means of the cone
construction, {\em cf.} Theorem \ref{holo} for details. 

Theorem \ref{tm1} excludes (under the tameness assumption), the
existence of non-trivial reducible holonomy groups of closed, non-exact,
Weyl connection on a compact conformal manifold.
Of course, an analogous statement can not hold for exact 
or non-closed Weyl connections. Simple counter-examples are Riemannian
products for the first case and conformal products (see
\cite{af}) for the second one.
\medskip

We now describe the strategy of the proof of the main results. To every closed,
non-exact Weyl connection $D$ on a conformal manifold $(M,c)$, we associate
its {\em minimal Riemannian cover} $(\mm,g_0)$, with the property that
the deck transformation group acts on $\mm$ by strict
homotheties, and 
the pull-back of $D$ to $\mm$ is the Levi-Civita connection of $g_0$.
We obtain in this way a one-to-one correspondence between closed,
non-exact, Weyl connections on compact conformal manifolds and
incomplete Riemannian manifolds carrying a co-compact group $\G$ of
strict homotheties acting freely and properly discontinuously, called 
{\em cone-like} manifolds (see also Remark \ref{cor} for an equivalent definition). 
Every Riemannian cone over a compact manifold is a cone-like manifold (see Example \ref{leit}). 
However, while the group of strict homotheties is at least 1-dimensional on every a Riemannian cone, it is only a discrete group on general cone-like manifolds.

Our first result which is also of independent interest, gives an information 
(essential for the proof of Theorem \ref{tm1}) about the metric space structure of cone-like manifolds. 
\begin{ath}\label{t1}
Let $(\mm,g_0,\G)$ be a cone-like space and let $d$ denote the distance on $M$ induced by $g_0$. Then
the metric completion of $(\mm,d)$ is a metric
space $\mmm$ such that $\mmm\sm\mm$ is a single point $\o$, called
the {\em singularity} of $\mm$.
\end{ath}

This result will be proved in Section 2 using topological group theory. The
crucial point of the proof is Lemma \ref{2.4}, which states that on a
cone-like manifold, the distance from a fixed point to its image
through any contracting homothety in $\G$ is bounded (a fact which 
does not necessarily hold on the universal covering of $\mm$). 

Theorem \ref{tm1} can now be restated as follows:
If the restricted Riemannian holonomy of a tame cone-like manifold 
is reducible, then the metric is flat. 

The key point of the proof is to show the existence of families of
{\em complete} integral leaves of any of the two integrable foliations
(corresponding to the 
parallel splitting of the tangent bundle), all isometric to each other. 
On the other hand, we show that the homotheties of $(\mm,g_0)$ preserve
these families, and we end up with pairs of complete Riemannian manifolds
which are at the same time isometric {\em and} homothetic to each
other, thus flat.

Roughly, these ideas are inspired by the original proof of Gallot's theorem
\cite{gal}. However, in our more general {\em cone-like} setting, the difficulty
comes from
the lack of information about the incomplete geodesics (which, for a
cone, are just its {\em rays}, the orbits
of the homothety flow). This is where we use the assumption that $D$ is a {\em tame} connection, 
which is equivalent to the existence of
uniform bounds for the life-times of the incomplete geodesics generated by
vectors belonging to any compact subset of the tangent bundle, and allows us to
construct the families of complete submanifolds mentioned above.

Theorem \ref{tm1} applies to a wide class of Weyl connections: We show
in Section 5 that the tameness condition is fulfilled by any small
deformations of a cone metric, and more generally by any Weyl connection $D$ which is 
{\em stable} with respect to a complete metric $g$ in the conformal class:

\begin{ath}\label{geod} A Weyl connection on a conformal manifold $(M,c)$
which is stable with respect to some complete metric $g\in c$ is tame.
\end{ath}

The stability condition ({\em cf.} Definition \ref{at}) is equivalent to a
system of differential inequalities (Equations \eqref{s1}-\eqref{s2} below) and is therefore an
open condition. It is satisfied by a $C^1$-neighborhood of
the canonical Weyl connection on the quotient of a cone by one of its
homotheties. Another class of stable Weyl connections is described in Example \ref{gc}.


We classify, at last, all possible restricted holonomy
groups of closed tame Weyl connections in Theorem \ref{holo}. For some
of these groups, the tameness condition turns out to be automatic. We
give, moreover, a full geometrical description of the underlying
manifolds in Theorem \ref{struct}. 

{\sc Acknowledgment.} We thank the anonymous referee for having pointed out an error in a previous version of this article and for several suggestions which significantly improved the exposition. 

\vs

\section{The minimal Riemannian cover of a closed Weyl connection}\label{2}

In this section, $(M,c)$ denotes a connected conformal manifold
and $D$ denotes a
closed, non-exact, Weyl connection on $(M,c)$ (see {\em e.g.} \cite {af} for 
the basic definitions). Let
$\pi:\tilde M\to M$ be the universal cover of $M$, endowed with
the induced 
conformal structure $\tilde c:=\pi ^*c$, and Weyl derivative
$\tilde D:=\pi ^*D$. Since $\tilde M$ is simply connected, $\tilde D$
is exact, so $\tilde M$ carries a Riemannian metric $\tilde
g_0\in\tilde c$, unique up to a multiplicative constant, whose
Levi-Civita covariant derivative is just $\tilde D$.

\begin{elem}\label{l21}
The group $\A\simeq\pi_1(M)$ of deck transformations of the covering
$\tilde M\to M$ consists of homotheties of $\tilde g_0$. 
\end{elem}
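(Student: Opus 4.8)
The plan is to exploit the defining property of $\tilde g_0$ as the essentially unique metric in the conformal class $\tilde c$ whose Levi-Civita connection is $\tilde D$. Fix a deck transformation $\gamma\in\A$, so that $\tilde\pi\circ\gamma=\tilde\pi$. Since both $\tilde c=\tilde\pi^*c$ and $\tilde D=\tilde\pi^*D$ are pull-backs under $\tilde\pi$ of objects living on $M$, the relation $\tilde\pi\circ\gamma=\tilde\pi$ gives at once
$$\gamma^*\tilde c=\gamma^*\tilde\pi^*c=(\tilde\pi\circ\gamma)^*c=\tilde\pi^*c=\tilde c,\qquad \gamma^*\tilde D=\tilde D,$$
the second identity by the same computation applied to $D$. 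Thus $\gamma$ is a conformal diffeomorphism of $(\tilde M,\tilde c)$ preserving the Weyl connection $\tilde D$.

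First I would observe that, because $\gamma$ preserves the conformal class $\tilde c$, the pulled-back metric $\gamma^*\tilde g_0$ is again a Riemannian metric belonging to $\tilde c$; hence there is a positive function $f$ on $\tilde M$ with $\gamma^*\tilde g_0=f\,\tilde g_0$. Next I would use the naturality of the Levi-Civita construction under diffeomorphisms: the Levi-Civita connection of $\gamma^*\tilde g_0$ is the pull-back of the Levi-Civita connection of $\tilde g_0$, namely $\gamma^*\tilde D$, which by the previous step equals $\tilde D$. Therefore $\gamma^*\tilde g_0$ is a metric in the conformal class $\tilde c$ whose Levi-Civita connection is again $\tilde D$.

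At this point the uniqueness statement already recorded in the excerpt applies directly: a metric in $\tilde c$ with Levi-Civita connection $\tilde D$ is determined up to a multiplicative constant, so the factor $f$ must be a positive constant, which is exactly the assertion that $\gamma$ is a homothety of $\tilde g_0$. Since $\gamma\in\A$ was arbitrary, the whole group $\A\simeq\pi_1(M)$ consists of homotheties. The only point genuinely requiring care is the uniqueness step, and I would justify it (if not merely citing the excerpt) by the standard conformal-rescaling formula: writing $\bar g=e^{2\phi}\tilde g_0$, the two Levi-Civita connections differ by $\bar\n_XY-\tilde D_XY=d\phi(X)Y+d\phi(Y)X-\tilde g_0(X,Y)\,\n\phi$, and the vanishing of this tensor forces $d\phi=0$, hence $\phi$ constant on the connected manifold $\tilde M$. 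This is the heart of the matter; everything else is a direct unwinding of the pull-back definitions.
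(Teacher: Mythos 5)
Your proof is correct and follows essentially the same route as the paper: deck transformations preserve both $\tilde c$ and $\tilde D$, so $\gamma^*\tilde g_0$ is a conformally related metric compatible with $\tilde D$, and the conformal factor must be constant. The paper phrases the last step as ``$\a^*\tilde g_0$ is $\tilde D$-parallel, hence $\rho$ is constant,'' while you invoke the uniqueness (up to constant) of the metric in $\tilde c$ with Levi-Civita connection $\tilde D$ via the conformal rescaling formula --- these are the same observation in two guises.
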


\begin{proof}
Every element $\a\in\A$ is a conformal transformation of $(\tilde
M,\tilde c)$, so there exists a positive function $\rho$ such that $\a
^*\tilde g_0= \rho ^2\tilde g_0$. On the other hand, $\a$
 preserves $\tilde D$, so the Riemannian metric $\a
^*\tilde g_0$ is $\tilde D$-parallel, therefore $\rho$ is constant.
\r

For every $\a\in\A$ we denote by $\rho(\a)$ the constant
of homothety. Consider the sub-group of isometric deck transformations of
$(\tilde M,\tilde g_0)$:
$$\I:=\{\a\in\A\ |\ \rho(\a)=1\}.$$
Of course, $\rho$ being a group homomorphism from $(\A,\circ)$ to $(\RM
^*_+,\times)$, $\I$ is a normal subgroup of $\A$. The quotient
manifold $\mm:=\tilde M/\I$ is a Galois covering of $M$ with Abelian deck
transformation group $\G:=\A/\I$, isomorphic to the subgroup
$\rho(\A)$ of $(\RM
^*_+,\times)$. Moreover $\tilde g_0$ projects to a Riemannian metric $g_0$
on $\mm$. Clearly $\rho$ descends to a group homomorphism, also denoted by
$\rho:\G\to\RM ^*_+$, such that $f^*g_0=\rho(f)^2g_0$ for every $f\in
\G$. The pull-back of $D$ to
$\mm$ (still denoted by $D$) is the Levi-Civita connection of $g_0$,
and the deck transformation group $\G$ acts by {\em pure} homotheties on
$(\mm,g_0)$ ({\em i.e.} the only isometry in $\G$ is the identity).
This motivates the following:

\begin{ede}
Let  $D$ be a closed Weyl
connection on a connected conformal manifold $(M,c)$. The triple
$(\mm,g_0,\G)$, together with the
covering $\pi:\mm\to M=\mm/\G$ is called the {\em minimal Riemannian
cover} of $(M,c,D)$.
\end{ede}

Notice that there is no canonical way to choose $g_0$ in its
homothety class, but all the properties we will consider in the sequel
will not depend on such a choice. 

If $d$ denotes the  geodesic distance on $\mm$ induced by the
Riemannian metric $g_0$, every $f\in \G$ is a homothety of the metric
space $(\mm,d)$, {\em i.e.} $d(f(x),f(y))=\rho(f)d(x,y)$
for each $x,\ y\in\mm$.

\begin{ede} \label{cl} A {\em cone-like} space is a locally compact metric space 
$(\mm,d)$ together with a finitely generated, non-trivial 
group $\G$ acting freely and properly discontinuously by homotheties on $(\mm,d)$,
such that $\G$ contains no isometry besides the
identity, and such that the quotient $\mm/\G$ is a compact topological space.
\end{ede}

\begin{ere}\label{cor} 
The above considerations show that the minimal Riemannian cover 
defines a one-to-one correspondence between the set of triples
$(M,c,D)$ consisting in a compact manifold $M$, a conformal structure
$c$ and a closed, non-exact Weyl connection $D$ on it, and the set of
cone-like Riemannian 
manifolds $(\mm,g_0,\G)$ (modulo constant rescalings of the metric $g_0$).
\end{ere}

A fundamental example of cone-like space, which is the {\em
Leitfaden} of our present study, is the following:

\begin{ex}\label{leit} 
Let $(N,g^N)$ be a complete Riemannian manifold and let 
$$(M_0,g_0):=(\R^*_+\times N, dt^2+t^2g^N)$$
be the {\em Riemannian cone} over $N$ (note that $g_0$ and the product metric $g$ on $M_0\simeq \R\times
N$ are conformally related by setting $t=e^s,\ t\in\R^*_+,\ s\in\R$). The multiplication by 
some $\l>1$ on the $\R$-factor is a strict homothety of $g_0$ and an isometry of $g$. It generates a group $\G$ 
acting freely and properly discontinuously on $\mm$. The metric $g$ projects to the product metric, also denoted by $g$, on 
the quotient manifold $M:=\mm/\G\simeq S^1\times N$. The Levi-Civita connection $D_0$ of $g_0$ is
$\G$-invariant, inducing therefore a closed, non-exact Weyl connection $D$ on $(M,[g])$. It is
straightforward to check that
$(\mm,g_0)$ is the minimal Riemannian cover of $(M,[g],D)$. The theorem of Gallot \cite{gal} about the irreducibility
of the Riemannian cone $(\R^*_+\times N, dt^2+t^2g^N)$ is equivalent to the fact that $D$ is either flat or irreducible.
\end{ex}

Metric cones can be equivalently characterized by the existence of a global {\em homothetic gradient
flow}, {\em i.e.} a complete vector field which is locally a gradient (with respect to a local
$D$-parallel metric $g_0$), and acts infinitesimally by homotheties of $g_0$. We will exhibit in
this section some further properties which the class of cone-like spaces shares with the (much more
restricted) class of Riemannian cones. The most important such property is
that a cone-like space still has an ``apex'', more precisely, it can
be completed by adding one point. This is exactly the statement of Theorem \ref{t1}, which we will now prove.


\begin{proof}[Proof of Theorem \ref{t1}]
Since every commutator of $\G$ is an isometry of $(\mm,d)$, the hypothesis ensures
that $\G$ is Abelian.
We need to show that $(\mm,d)$ contains at least one non-convergent Cauchy
sequence, and that any two such sequences are equivalent. 

Let $f\in\G$ be any element with $\rho(f)<1$. For every $x\in\mm$
and $m<n\in\N$ we
have 
$$d(f^m(x),f^n(x))\le\sum_{k=m}^{n-1}d(f^k(x),f^{k+1}(x))=
d(x,f(x))\sum_{k=m}^{n-1}\rho(f)^k<
d(x,f(x))\frac{\rho(f)^m}{1-\rho(f)},$$ 
thus showing that $\{f^n(x)\}$ is a Cauchy sequence. If this sequence
had a limit $l$ in $\mm$, then $l$ would be a fixed point of $f$,
contradicting the fact that $\G$ acts freely. Thus $(\mm,d)$ is non-complete.

\begin{elem}\label{2.3}
Let $\{x_n\}$ be a non-convergent Cauchy sequence in $(\mm,d)$. Then
there exists $x\in \mm$ and a sequence $\{f_n\}$ of elements of $\G$
satisfying $\lim_{n\to\infty}\rho(f_n)=0$ such that $\{x_n\}$ is
equivalent to $\{f_n(x)\}$.
\end{elem}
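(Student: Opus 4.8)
The plan is to exploit the compactness of the quotient $\mm/\G$ to put each $x_n$ into a normal form $x_n=g_n(k_n)$, with $g_n\in\G$ and $k_n$ ranging in a fixed compact set, and then to show that the homothety ratios $\rho(g_n)$ necessarily tend to $0$. Since $\mm/\G$ is compact and $\mm$ is locally compact, I would first fix a compact set $K\subset\mm$ with $\G\cdot K=\mm$ (take the union of the closures of finitely many relatively compact open sets whose $\G$-orbits cover $\mm$). For each $n$, choose $g_n\in\G$ and $k_n\in K$ with $x_n=g_n(k_n)$. The lemma then reduces to two assertions: (i) $\rho(g_n)\to 0$, and (ii) $\{x_n\}$ is equivalent to $\{g_n(x)\}$ for an arbitrary fixed $x\in\mm$, so that one may take $f_n:=g_n$.

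The core is (i), which I would prove by contradiction, and this is the step I expect to be the main obstacle. Suppose $\rho(g_n)\not\to 0$, so $\rho(g_n)\ge c$ for some $c>0$ and infinitely many $n$. Using local compactness, fix a compact neighbourhood $\overline U$ of $K$ and set $\delta_0:=d(K,\mm\sm U)>0$, where $U$ is the interior of $\overline U$. Proper discontinuity guarantees that $S:=\{h\in\G\mid h(\overline U)\cap\overline U\neq\emptyset\}$ is finite. Pick $\delta\in(0,\delta_0)$; by the Cauchy property there is an $N$ with $d(x_p,x_q)<c\delta$ for all $p,q\ge N$, and I would select an index $a\ge N$ with $\rho(g_a)\ge c$. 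The decisive computation uses the homothety scaling: for every $q\ge N$,
\[
d\big(k_a,\,g_a^{-1}g_q(k_q)\big)=\frac{d(x_a,x_q)}{\rho(g_a)}<\frac{c\delta}{c}=\delta ,
\]
so $g_a^{-1}g_q(k_q)$ lies within $\delta<\delta_0$ of $k_a\in K$, hence in $\overline U$, while $k_q\in K\subset\overline U$; therefore $g_a^{-1}g_q\in S$. Consequently $x_q\in g_a\cdot S\cdot K$ for all $q\ge N$, and the right-hand side is a finite union of images of $K$ under homeomorphisms, hence compact. A Cauchy sequence eventually contained in a compact set converges, contradicting the hypothesis that $\{x_n\}$ does not converge. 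This establishes (i).

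Assertion (ii) is then immediate from the homothety property and the boundedness of $K$: for any fixed $x\in\mm$ and $R:=\sup_{k\in K}d(k,x)<\infty$,
\[
d\big(x_n,g_n(x)\big)=d\big(g_n(k_n),g_n(x)\big)=\rho(g_n)\,d(k_n,x)\le \rho(g_n)\,R\longrightarrow 0 ,
\]
so $\{x_n\}$ and $\{g_n(x)\}$ determine the same point of the completion. Setting $f_n:=g_n$ and invoking (i) finishes the argument. The only delicate point is (i): it is precisely the interplay between the expansion/contraction encoded by $\rho$ and the finiteness coming from proper discontinuity that prevents a non-convergent Cauchy sequence from having the ratios of its $K$-parts bounded below. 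Note that local compactness enters essentially here, in order to thicken $K$ to a compact neighbourhood $\overline U$ on which proper discontinuity can be applied.
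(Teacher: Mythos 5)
Your proof is correct, and it is a close relative of the paper's: both arguments put $x_n$ into the normal form (group element applied to a point of a fixed compact region) and derive $\rho\to 0$ by contradiction from proper discontinuity combined with the rescaled Cauchy estimate $d(\cdot,\cdot)/\rho$. The packaging, however, differs in ways worth recording. The paper first passes to a subsequence so that the projections $\pi(x_n)$ converge to some $y\in M$, works with a single wandering ball $B_x(2r)$ around a preimage $x$ of $y$ (so its normal form holds only for large $n$ and the base point is forced to lie over $y$), and obtains the contradiction from the pairwise lower bound $d(x_n,x_m)\ge 2r\delta$ whenever $f_n\ne f_m$, which makes $f_n$ eventually constant and hence $\{x_n\}$ convergent. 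You instead take a global compact set $K$ with $\G\cdot K=\mm$, need no subsequence, may choose the base point $x$ arbitrarily, and trap the entire tail $\{x_q\}_{q\ge N}$ inside the compact set $g_aSK$ --- a somewhat slicker route to the same contradiction. The one ingredient you use that the paper does not is the compact-set form of proper discontinuity, namely the finiteness of $S=\{h\in\G\mid h(\overline U)\cap\overline U\ne\emptyset\}$; the paper's own proof only invokes the weaker covering-space form (each point has a neighbourhood $U_0$ with $h(U_0)\cap U_0=\emptyset$ for all $h\ne\mathrm{id}$), and Definition \ref{cl} does not specify which version is meant. The upgrade from the weak to the strong form is standard in the present setting (if $h_n(u_n)=v_n$ with $u_n,v_n\in\overline U$ for infinitely many distinct $h_n$, extract limits $u_n\to u$, $v_n\to v$, note $v=g(u)$ for some $g\in\G$ since $\pi(u)=\pi(v)$, and apply the wandering-neighbourhood property at $u$ to the elements $g^{-1}h_n$), but you should either adopt the strong form as your definition or include this short derivation. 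With that line added, every step checks out, including the key identity $d\bigl(k_a,g_a^{-1}g_q(k_q)\bigr)=d(x_a,x_q)/\rho(g_a)$ and the fact that a Cauchy sequence whose tail lies in a compact set converges.
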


\begin{proof}
Let $\pi$ denote the projection of $\mm$ onto the compact space
$M:=\mm/\G$. By choosing a subsequence if necessary, we may assume that
$\pi(x_n)$ converges to some $y\in M$. Take $x\in\pi ^{-1}(y)$. Since
$\G$ acts properly discontinuously, there exists some open
neighborhood $U_0$ of $x$ such that $h(U_0)\cap U_0=\emptyset$ for
every $h\in\G$ different from the identity. We choose $r>0$ such that
the ball $B_x(2r)$ of radius $2r$ in $x$ lies in $U_0$. 
Then $U:=\pi(B_x(r))$ is a
neighborhood of $y$ in the quotient topology, so there exists some
$n_0$ such that $\pi(x_n)\in U$ for $n\ge n_0$. This shows that for
$n\ge n_0$ there exist $z_n\in B_x(r)$ and $f_n\in \G$ such that
$x_n=f_n(z_n)$. 

Suppose that $\rho(f_n)$ does not tend to zero. By taking a
subsequence if necessary, we may assume that $\rho(f_{n})>\delta$ for
every $n$. For every $m,n$ such that $f_n\ne f_m$, the open balls
$f_{n}(B_x(2r))$ and 
$f_{m}(B_x(2r))$ are disjoint, being included in $f_n(U_0)$,
and $f_m(U_0)$ respectively. As $f_{n}(z_n)\in
f_{n}(B_x(r))$ and $f_{m}(z_m)\in
f_{m}(B_x(r))$, we get $d(x_n,x_m)=d(f_{n}(z_n),f_{m}(z_m))\ge
2r\delta$. The fact that
$\{x_n\}$ is a
Cauchy sequence ensures therefore the existence of an index $N$ such that
$f_{n}=f_{N}$ for every 
$n>N$. Since $B_x(r)$ is relatively compact, we may assume (passing
to some subsequence, if necessary) that $z_{n}$ tends to $z\in
\overline{B_x(r)}$. Thus $\{x_{n}\}$ converges to $f_{N}(z)$,
contradicting the fact that $\{x_n\}$ does not converge.

This shows that $\lim_{n\to\infty}\rho(f_n)=0$. Since
$d(f_n(z_n),f_n(x))=\rho(f_n) d(z_n,x)<r\rho(f_n)$, the sequences
$\{x_n\}$ and $\{f_n(x)\}$ are equivalent, thus proving the lemma.
\r

In order to conclude the proof of the theorem we need one more
technical result.

\begin{elem}\label{2.4}
For every fixed point $x\in\mm$ there exists a constant $K_x$, depending
on $x$, such that $d(x,f(x))<K_x$ for every contracting $f\in\G$
({\em i.e.} with $\rho(f)<1$). 
\end{elem}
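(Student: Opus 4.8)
The plan is to reduce everything to the elementary geometric estimate for powers of a single contracting map, namely $d(x,f^n(x))\le d(x,f(x))/(1-\rho(f))$, which is already contained in the Cauchy-sequence computation at the start of the proof of Theorem \ref{t1}, and then to bootstrap it to arbitrary contracting elements. Two structural facts will do the bootstrapping: that $\G$ is Abelian (established at the beginning of the proof of Theorem \ref{t1}), and the homothety-scaling identity $d(x,g^{-1}(x))=\rho(g)^{-1}d(x,g(x))$, valid for any $g\in\G$.

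First I would fix a finite generating set $\gamma_1,\dots,\gamma_k$ of $\G$. Since the only isometry in $\G$ is the identity, each nontrivial generator satisfies $\rho(\gamma_i)\neq1$; replacing $\gamma_i$ by $\gamma_i^{-1}$ where necessary (and discarding trivial generators), I may assume $\lambda_i:=\rho(\gamma_i)<1$ for all $i$. Set $M:=\max_i d(x,\gamma_i(x))/(1-\lambda_i)$, which is finite, so that the rank-one bound above yields $d(x,\gamma_i^{b}(x))\le M$ for every $b\ge0$.

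The core step is a uniform bound on the ``positive cone'': for every $P=\gamma_1^{b_1}\cdots\gamma_k^{b_k}$ with all $b_i\ge0$ one has $d(x,P(x))\le kM$. I would prove this by telescoping with the triangle inequality and the homothety property, obtaining $d(x,P(x))\le\sum_{i}\big(\prod_{j<i}\lambda_j^{b_j}\big)\,d(x,\gamma_i^{b_i}(x))$. Here each prefactor $\prod_{j<i}\lambda_j^{b_j}\le1$ because $\lambda_j<1$ and $b_j\ge0$, while $d(x,\gamma_i^{b_i}(x))\le M$, so the whole sum is at most $kM$, independently of the exponents $b_i$.

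Finally, given an arbitrary contracting $f$, I would write $f=\prod_i\gamma_i^{a_i}$ and split it (using that $\G$ is Abelian) as $f=PR^{-1}$, where $P:=\prod_{a_i\ge0}\gamma_i^{a_i}$ and $R:=\prod_{a_i<0}\gamma_i^{-a_i}$ both lie in the positive cone. Then $d(x,f(x))\le d(x,P(x))+\rho(P)\,d(x,R^{-1}(x))$, and the scaling identity rewrites the second term as $\rho(P)\rho(R)^{-1}d(x,R(x))=\rho(f)\,d(x,R(x))$. Since $\rho(f)<1$ and both $d(x,P(x))$ and $d(x,R(x))$ are at most $kM$ by the cone bound, this gives $d(x,f(x))<2kM=:K_x$. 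The only real obstacle is precisely that a contracting $f$ may be assembled from generators pulling in conflicting (expanding and contracting) directions, so that a naive word-length estimate diverges as the word length grows; the factorization $f=PR^{-1}$ is designed so that the unavoidable expansion produced by $R^{-1}$ is exactly absorbed by the global contraction factor $\rho(P)\rho(R)^{-1}=\rho(f)<1$.
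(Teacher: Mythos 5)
Your proof is correct, and while it shares the paper's overall architecture --- write the contracting element $f$ as (positive part)$\cdot$(positive part)$^{-1}$, use the homothety identity $d(x,R^{-1}(x))=\rho(R)^{-1}d(x,R(x))$, and let the global factor $\rho(f)<1$ absorb the expansion coming from $R^{-1}$ --- the technical heart, namely the bound on the positive cone, is handled by a genuinely different and simpler device. The paper normalizes its generators to be \emph{expanding} ($\rho_i>1$) and proves by induction on the number of generators the exponent-dependent estimate
$$d\Big(x,\prod_i h_i^{a_i}(x)\Big)\le D_x\prod_i\frac{\rho_i^{a_i+1}-1}{\rho_i-1},$$
which grows like $\rho(P)$ as the exponents grow; the required cancellation must then be extracted at the very end from $\prod_i\rho_i^{a_i}=\rho(f)<1$, after some bookkeeping (dropping the $-1$'s in the numerators, multiplying out, and invoking $a+b\le ab+1$). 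By normalizing the generators to be \emph{contracting} instead, your telescoping sum has all prefactors $\prod_{j<i}\lambda_j^{b_j}\le 1$, so the positive-cone bound is already uniform in the exponents ($kM$, with no induction), and the hypothesis $\rho(f)<1$ is used exactly once, to control the single factor $\rho(P)\rho(R)^{-1}$ multiplying $d(x,R(x))$. Both proofs use the commutativity of $\G$ in the same two places (to put $f$ in the normal form $\prod_i\gamma_i^{a_i}$ and to move $R$ past $P$ in the scaling step), and both isolate the same essential difficulty --- that a contracting word may mix expanding and contracting directions --- but your choice of normalization yields a cleaner, shorter argument for the same statement.
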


\begin{proof}
Let $\{h_1,\ldots,h_n\}$ be a system of generators of $\G$ with
$\rho_i:=\rho(h_i)>1$ and let $$D_x:=\max_{\{i=1,\ldots,n\}}d(x,h_i(x)).$$ For
every $(a_1,\ldots,a_k)\in \N^k$, we claim that 
\beq\label{in}d\bigg(x,\bigg(\prod_{i=1}^kh_i ^{a_i}\bigg)(x)\bigg)\le
D_x\prod_{i=1}^k\frac{\rho_i ^{a_i+1}-1}{\rho_i-1}.\eeq 
We prove the claim by induction on $k$. For $k=1$ we have 
$$d(x,h_1^{a_1}(x))\le\sum_{s=0}^{a_1-1}d(h_1^s(x),h_1^{s+1}(x))=
d(x,h_1(x))\sum_{s=0}^{a_1-1}\rho_1^s\le
D_x\frac{\rho_1^{a_1}-1}{\rho_1-1} <D_x\frac{\rho_1^{a_1+1}-1}{\rho_1-1}.$$ 
Assume now that \eqref{in} holds for each $k\le l$ and for every
$(a_1,\ldots,a_k)\in \N^k$ and consider some element
$(a_1,\ldots,a_{l+1})\in \N^{l+1}$. We denote by 
$$h:=\prod_{i=1}^lh_i ^{a_i}\ \mbox{ and by }\
y_j:=\left(h_{l+1}^j\circ h \right)(x),\ \forall\; j=0,\dots,a_{l+1}.$$
Using \eqref{in} for $k=l$ we have
$$d(x,y_0)\le D_x\prod_{i=1}^k\frac{\rho_i
  ^{a_i+1}-1}{\rho_i-1},$$
and $d(y_j,y_{j+1})=\rho_{l+1}^j d(y_0,y_1)=\rho(h)\rho_{l+1}^j
d(x,h_{l+1}(x))$, which further imply
\bea
d\bigg(x,\prod_{i=1}^{l+1}h_i ^{a_i}(x)\bigg)&=&d(x,y_{a_{l+1}})\le
d(x,y_0)+\sum_{j=0}^{a_{l+1}-1}d(y_j,y_{j+1}) \\
&\le& D_x\prod_{i=1}^l\frac{\rho_i
  ^{a_i+1}-1}{\rho_i-1}+D_x\prod_{i=1}^l\rho_i
^{a_i}\sum_{j=0}^{a_{l+1}-1}\rho_{l+1}^j\\
&\le&D_x\prod_{i=1}^l\frac{\rho_i
  ^{a_i+1}-1}{\rho_i-1}\bigg(1+\sum_{j=0}^{a_{l+1}-1}\rho_{l+1}^j\bigg)
\le D_x\prod_{i=1}^l\frac{\rho_i
  ^{a_i+1}-1}{\rho_i-1}\bigg(\sum_{j=0}^{a_{l+1}}\rho_{l+1}^j\bigg)\\
&=&D_x\prod_{i=1}^{l+1}\frac{\rho_i
  ^{a_i+1}-1}{\rho_i-1},
\eea
thus proving our claim for $k=l+1$. In order to finish the proof of
the lemma, let $f\in\G$ be an element with $\rho(f)<1$. By
reordering the system of generators if necessary, we can write 
$$f=\prod_{i=1}^nh_i ^{a_i},\quad \hbox{with } a_i\ge 0\ \hbox{for}\
i\le m\ \hbox{and}\ a_i\le 0\ \hbox{for}\
i\ge m+1.$$  
We denote $b_i:=-a_i\ge 0$ for $i\ge m+1$. 
Using \eqref{in} we obtain
\bea
d(x,f(x))&=&\bigg(\prod_{i=m+1}^n\rho_i ^{a_i}\bigg)d\bigg(\prod_{i=1}^mh_i
^{a_i}(x), \prod_{i=m+1}^nh_i ^{b_i}(x)\bigg)\\
&\le&\bigg(\prod_{i=m+1}^n\rho_i ^{a_i}\bigg)\bigg(d\big(x,\prod_{i=1}^mh_i
^{a_i}(x)\big)+d\big(x, \prod_{i=m+1}^n h_i ^{b_i}(x)\big)\bigg)\\
&\le&\bigg(\prod_{i=m+1}^n\rho_i ^{a_i}\bigg)\bigg(D_x\prod_{i=1}^m\frac{\rho_i
  ^{a_i+1}-1}{\rho_i-1}+D_x\prod_{i=m+1}^n\frac{\rho_i
  ^{b_i+1}-1}{\rho_i-1}\bigg).
\eea
We neglect the $-1$ terms in the numerators above and multiply
the brackets. Remembering that $\prod_{i=1}^n\rho_i^{a_i}=\rho(f)<1$,
we finally get 
\bea 
d(x,f(x))&\le&
D_x\bigg(\prod_{i=1}^m\frac{\rho_i}{\rho_i-1}\prod_{i=1}^n\rho_i^{a_i}+ 
\prod_{i=m+1}^n\frac{\rho_i}{\rho_i-1}\bigg)\\  
&\le&D_x\bigg(\prod_{i=1}^m\frac{\rho_i}{\rho_i-1}+
\prod_{i=m+1}^n\frac{\rho_i}{\rho_i-1}\bigg)\\
&\le&D_x\bigg(\prod_{i=1}^n\frac{\rho_i}{\rho_i-1}+1\bigg)=:K_x,
\eea
where the last inequality follows from the fact that $a+b\le ab+1$ for
all $a,b\ge 1$. 
\r

Let now $\{x_n\}$ be a non-convergent Cauchy sequence in
$\mm$. Choose $y\in\mm$ and $f\in\G$ such that $\rho:=\rho(f)<1$. We
claim that $\{x_n\}$ is equivalent to $\{f^n(y)\}$.
By Lemma \ref{2.3}, there exists $x\in \mm$ and a sequence
$\{f_n\}$ of elements of $\G$ 
satisfying $\lim_{n\to\infty}\rho(f_n)=0$, such that $\{x_n\}$ is
equivalent to $\{f_n(x)\}$.  Since
$\lim_{n\to\infty}\rho(f_n)=0$, there exists an increasing sequence of integers
$\{k_n\}$ such that $\rho(f_{k_n})<\rho ^{n}$. As $\rho(f^{-n}\circ
f_{k_n})<1$, Lemma \ref{2.4} yields
$$d(f^n(x),f_{k_n}(x))=\rho ^n d(x,(f^{-n}\circ f_{k_n})(x))\le K_x\rho
^n.$$
The sequences $\{f_{k_n}(x)\}$ and $\{f^n(x)\}$ are thus equivalent,
so the same holds for $\{x_n\}$ and $\{f^n(x)\}$. Finally, for any $y\ne x$,
$\{f^n(x)\}$ is clearly equivalent to $\{f^n(y)\}$, thus finishing the
proof of the theorem.
\end{proof}

Theorem \ref{t1} shows that, despite the fact that cone-like spaces
only carry a discrete group of homotheties, they still have the
one-point completion like all Riemannian cones. Note that the universal
covering of a Riemannian cone is a Riemannian cone itself, therefore admits
the one-point completion as well. It is unknown whether this fact
holds for the universal covering of an arbitrary cone-like space (see
Section \ref{55}). 

\vs

Functions on a Riemannian cone measuring geometric quantities 
like lengths, are equivariant with respect to the radial flow (acting by
homotheties), and thus vary linearly on the rays.

In the more general case of cone-like spaces, we introduce, for
further use, the following simple notion:

\begin{ede}\label{q-lin}
Two positive functions $f_1,f_2:\mm\ra\R_+^*$ are said to be {\em
  equivalent} if 
their ratio is bounded above and below by positive constants. A
function which is equivalent to the distance to the singularity $\o$ is
called {\em quasi-linear}.
\end{ede}

Denote by $\dt:\mm\ra\R^*_+$ the distance to the singularity $\o\in\mmm$:
$\dt(x):=d(x,\o)$. 

\begin{elem} \label{l3.5} Let $\psi:\mm\to\R_+^*$ be any
  $\G$-equivariant function {\em of weight 1} on $\mm$ ({\em i.e.}  
satisfying ${\psi\circ f}=\rho(f){\psi}$ for every element $f\in\G$), such
that $\psi$ and $\tfrac1\psi$ are locally bounded 
({\em e.g.}, $\psi$ is continuous).  Then $\psi$ is quasi-linear.
\end{elem}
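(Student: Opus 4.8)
The plan is to compare $\psi$ with the distance $\dt$ to the singularity, exploiting the fact that both are $\G$-equivariant of weight $1$, so that their ratio is $\G$-invariant and descends to the compact quotient $M=\mm/\G$, where boundedness comes essentially for free.

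First I would check that $\dt$ is itself $\G$-equivariant of weight $1$. Every $f\in\G$ is a homothety of ratio $\rho(f)$ of $(\mm,d)$, hence Lipschitz, so it extends continuously to the completion $\mmm$; the extension $\bar f$ is again a homothety of ratio $\rho(f)$ and restricts to the bijection $f$ on $\mm$, so by Theorem \ref{t1} it must fix the only added point, $\bar f(\o)=\o$. Consequently $\dt(f(x))=d(f(x),\o)=d(\bar f(x),\bar f(\o))=\rho(f)\,d(x,\o)=\rho(f)\,\dt(x)$, so $\dt$ has weight $1$, exactly like $\psi$.

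Next I would form the ratio $r:=\psi/\dt:\mm\ra\R_+^*$. Having equal weights, $\psi$ and $\dt$ produce a $\G$-invariant $r$, namely $r(f(x))=\rho(f)\psi(x)/(\rho(f)\dt(x))=r(x)$; hence $r$ factors through $\pi:\mm\to M$ as a function $\bar r:M\ra\R_+^*$. It then remains to bound $\bar r$ above and below by positive constants. Local boundedness of $\psi$ and $\psi^{-1}$, together with the continuity and strict positivity of $\dt$ on $\mm$ (which make $\dt$ and $\dt^{-1}$ locally bounded), show that $r$ and $r^{-1}$ are locally bounded. Since $\pi$ is an open local homeomorphism and $r$ is constant on its fibres, these local bounds descend: every $y\in M$ has a neighbourhood $\pi(V)$, with $V\subset\mm$ open, on which $\bar r$ takes the same values as $r$ on $V$ and is therefore confined to some $[c_y,C_y]$ with $0<c_y\le C_y$. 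Compactness of $M$ then yields a finite subcover and global constants $0<c\le C$ with $c\le\bar r\le C$ on $M$, i.e. $c\,\dt\le\psi\le C\,\dt$ on $\mm$; this is precisely the equivalence of $\psi$ with $\dt$, that is, quasi-linearity.

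The only genuine input is the first step: the equivariance of $\dt$ rests on the homotheties of $\mm$ extending to the one-point completion and fixing $\o$, which is exactly where Theorem \ref{t1} is essential. Beyond that, the one point requiring a little care is that $\psi$ is assumed merely locally bounded (with locally bounded inverse) rather than continuous, so one must pass the two-sided local bounds down through the covering map $\pi$ rather than invoke continuity on $M$ directly; everything else is a routine compactness argument.
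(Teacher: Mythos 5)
Your proof is correct and follows essentially the same route as the paper: both exploit the weight-one equivariance of the distance $\dt$ to the singularity (which you justify a bit more explicitly, via the extension of each homothety to $\mmm$ fixing $\o$) to make the ratio $\psi/\dt$ $\G$-invariant, and then bound it by a compactness argument — the paper on a compact fundamental domain, you on the compact quotient $M$, which is the same idea.
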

\begin{proof}
Consider a compact fundamental domain $\Omega$ of
the action of $\G$ on $\mm$ and define
$$k_1:=\inf_{x\in \Omega}\frac{{\psi(x)}}{\dt(x)},\qquad k_2:=\sup_{x\in
  \Omega}\frac{ {\psi(x)}}{\dt(x)}.$$ 
Because $\dt$ is continuous and $\psi$ and its inverse are locally
bounded, their quotients $\dt/\psi$ and $\psi/\dt$ are bounded on the 
compact set $\Omega$. It follows that $k_1,k_2$ are positive real
numbers, so that
$$\frac{{\psi(x)}}{\dt(x)}\in[k_1,k_2]$$ 
holds tautologically on $\Omega$. Let now $y$ be an arbitrary point of
$\mm$ and $f\in\G$ such that $x:=f^{-1}(y)\in \Omega$.  From the
equivariance property of $\psi$ we get
$$\frac{ {\psi(y)}}{\dt(y)}=\frac{ {\psi(f(x))}}{\dt(f(x))}=
\frac{\rho(f) {\psi(x)}}{\rho(f)\dt(x)}=\frac{
{\psi(x)}}{\dt(x)}\in[k_1,k_2],$$ 
which finishes the proof.
\r

As a consequence of the previous lemma, we show for later use that 
if $(\mm,g_0)$ is the minimal Riemannian cover of a closed non-exact
Weyl connection $D$ on a compact conformal manifold $(M,c)$, then any 
conformal factor
relating $g_0$ to the pull-back on $\mm$ of a metric in the conformal 
class $c$ on $M$ is equivalent to the distance function $\dt$ to the 
singularity $\omega\in\mm$:

\begin{elem}\label{3.5} 
Let $g$ be the pull-back to $\mm$ of a metric in $c$ on $M$ and 
let $\f:\mm\to\RM^*_+$ be defined by 
$g_0=\f^2 g$. The function $\f$ is then quasi-linear
on $\mm$.
\end{elem}
\begin{proof}
Every element $f\in\G$ being an
isometry of $g$, we obtain
$$\rho(f)^2{\f^2}g=\rho(f)^2g_0=f^*g_0={(\f\circ f)^2}g,$$
showing that ${\f\circ f}=\rho(f){\f}$. The assertion thus follows from
Lemma \ref{l3.5}. 
\r

\vs

\section{Tame connections and their geodesics}\label{3}

In contrast to the Riemannian situation, a Weyl connection on a compact
conformal manifold is not necessarily geodesically complete.

\begin{ex}
Let $(M,c)$ be a compact conformal manifold and let $D$ be a closed,
non-exact, Weyl connection on $M$. Theorem \ref{t1} shows that the
minimal Riemannian cover $(\mm,g)$ of $(M,c,D)$ is incomplete, so
through every point of $\mm$ passes an incomplete geodesic. Its
projection onto $M$ is thus an incomplete geodesic of $D$.
\end{ex}

In order to study the geometry of $\mm$ in the neighborhood of its
singularity $\o$, we need to understand the behavior of the geodesics
passing through or near $\o$. In principle, the dynamics of the
geodesic flow of $(M,g)$ can be rather wild near $\o$. Here is a list
of phenomena which may occur: \bi
\item The lengths of the geodesics starting at some given point $P$ and
  passing through $\o$ ({\em i.e.} the life-time of an incomplete geodesic) 
  might not be bounded.
\item There might exist closed geodesics through $\o$ ({\em i.e.} geodesics
  having finite life-time in both directions).
\item There might even exist a complete geodesic whose adherence
contains $\o$.  \ei

\subsection{Tame connections }
To begin with, let us recall some basic facts about the geodesic flow of 
an affine connection $D$ on a manifold $M$, or, equivalently, the exponential 
map
$$\exp^D:\U\ra M.$$
defined on a (maximal) open subset $\U$ of $TM$.
For $X\in\U_x:=\U\cap T_xM$, $\exp^D(X)$ is the
point $\c(1)$ on the geodesic defined by 
$$\c(0)=x\ \mbox{and}\ \dot\c(0)=X.$$ 
We define the {\em life-time} $\ll^D:TM\ra(0,+\infty]$ of a
half-geodesic generated by $X\in
TM$, by
$$\ll^D(X):=\sup\{t>0\ |\ tX\in\U\},$$ 
in other words, the supremum
of the time for which the half-geodesic tangent to $X$ is defined. 
Of course, if $(M,D)$ is geodesically complete, all
life-times are infinite. 

We split the complement $TM\sm\{0\}$ of the zero section in the tangent bundle 
into two sets, the set $\I^D$ of vectors
generating incomplete half-geodesics, and its complement $\mathcal{C}^D$. 
These subsets are both star-shaped, {\em i.e.} for a vector $X\in TM$
$$X\in\I^D\ \iff sX\in\I^D,\qquad \forall\ s>0.$$ 
The two sets $\I^D$ and $\mathcal{C}^D$ are in
general neither open, nor closed. They are however Borel measurable, as $\mathcal{C}^D$ is an
infinite intersection of open sets.

For the following definition, we need to consider a Riemannian metric $g$
on $M$, in order to define the sphere bundle $S^gM$ of unit vectors
on $M$. However, the notion that we are about to define does not depend
on the choice of such a metric:

\begin{defi}\label{tamme} A connection $D$ on $M$ is {\em tame} if and
  only if the function
$$\mu^g:\mm\ra [0,+\infty],\qquad \mu^g(x):=\sup\left\{\{0\}\cup\{\ll^D(X)\ |\
X\in\I^D_x\cap S^g_xM \}\right\}$$
is locally bounded on $M$. 
\end{defi}
Of course, $\mu^g=0$ if and only if $D$ is geodesically complete.

If $D:=\nb^g$ is the Levi-Civita connection of a Riemannian manifold $(M,g)$, 
$\mu^g(x)$ is the supremum of the lengths of all incomplete
half-geodesics starting in $x$. If $\nb^g$ is tame, we say that
$(M,g)$ is a {\em tame} Riemannian manifold. Note that, while the
tameness of the connection $D$ does not depend on the auxiliary metric
in Definition \ref{tamme}, the tameness of a Levi-Civita connection
depends on the corresponding metric (see Example \ref{exf} for an
example of a non-tame Riemannian manifold, which
becomes complete after a conformal rescaling).

\subsection{Tame cone-like spaces }
If $D$ is a connection on $M$, the induced connection (still denoted
by $D$) on some covering $M_0$ of $M$ is tame if and only if $D$ is tame on $M$.
A tame closed Weyl connection $D$ on a compact manifold $(M,c)$ is
therefore equivalent to a tame cone-like Riemannian manifold
$(M_0,g_0)$. Here we have the following criterion:

\begin{epr}\label{tam} 
A cone-like manifold $(\mm,g_0)$ is tame if and only if $\mu:\mm\ra
(0,+\infty]$ is (finite and) quasi-linear on $\mm$, i.e. if there
exists a constant $K>0$ such that
\be\label{tamql}\delta(x)\le\mu(x)\le K\delta(x),\ \forall x\in M_0,\ee
where $\dt:M_0\ra\R^*_+$ is the distance from a point to the
singularity $\omega$.
\end{epr}
\begin{proof} The distance $\dt$ to the singularity $\o\in \mm$ 
is always continuous on $\mm$, and $\mu\ge
  \dt$, therefore $\tfrac1\mu$ is locally bounded. On the other hand,
  $\dt$ and $\mu$ are clearly $\G$-equivariant of weight 1, because
  both denote geometrical lengths. Therefore, if (\ref{tamql}) holds
  on a fundamental domain of $\G$, then it holds on whole $\mm$. As
  such a fundamental domain is relatively compact, the quasi-linearity
  of $\mu$ is thus equivalent to its local boundedness. 
\end{proof}

It is thus obvious that a cone over a complete Riemannian manifold is
tame (its only incomplete geodesics are its rays, thus $\mu=\dt$). On
the other hand, not all cone-like manifolds are tame, as we will see
in Section \ref{ntame}, therefore not all closed Weyl connections on a
compact manifold are tame. We will however show in 
 that the
tameness condition holds for an open set of Weyl connections (in
the $C^1$-topology) containing the ones constructed (as in Example
\ref{leit}) from Riemannian cones.

\vs
 
\section{Closed Weyl connections with reducible holonomy}\label{4}

The goal of this section is to prove our main result, Theorem \ref{tm1}, concerning the holonomy
of tame locally metric connections. We will rephrase it in terms of Weyl connections.

\begin{ath}\label{main}
If the restricted holonomy representation of a closed, non-exact, tame
Weyl connection $D$ on a compact conformal manifold $(M,c)$ is
reducible, then $D$ is flat.
\end{ath}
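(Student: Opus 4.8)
I would work on the minimal Riemannian cover $(\mm,g_0)$, where $D$ becomes the Levi-Civita connection of $g_0$ and $\G$ acts by strict homotheties. By Remark \ref{cor} the reducibility of the restricted holonomy means that, at least locally, the tangent bundle splits $\G$-equivariantly and $D$-parallel as $T\mm=\V_1\oplus\V_2$ into two non-trivial parallel distributions; these are integrable with totally geodesic leaves, and they define two foliations $\F_1,\F_2$ whose leaves are the maximal integral submanifolds. The strategy, following Gallot \cite{gal}, is to show that the leaves through the singularity are at once isometric and homothetic to each other, which forces them to be flat, and then to propagate flatness to all of $\mm$. The de Rham splitting is only local a priori, so I would pass to the universal cover $\tilde M$ (where the splitting is global) when convenient, using that tameness lifts to any cover, as noted in the excerpt.

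\textbf{Key steps.}
First I would use the \emph{tame} hypothesis via Proposition \ref{tam}: the function $\mu$ measuring the supremal length of incomplete half-geodesics from a point is finite and quasi-linear, i.e. comparable to $\dt(x)=d(x,\o)$. This gives uniform control on the life-times of the incomplete geodesics approaching $\o$, exactly the ingredient missing in the general cone-like setting. Using this control, I would show that a leaf $L$ of one foliation, say $\F_1$, through a point near $\o$ is \emph{complete} as an immersed submanifold: incomplete directions of $g_0$ must, by the parallel splitting and the life-time bounds, lie in a controlled way, and the tame bound lets me rule out leaves running into the singularity in finite time, so that along the leaf directions geodesics survive for all time. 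This produces a ``large'' family of complete, mutually isometric leaves (isometric because they are translates under the parallel transport / the local product structure). Second, I would show that the homotheties in $\G$ (and, infinitesimally, the radial-type scaling implicit in the quasi-linearity of $\dt$) \emph{preserve} each foliation: since $f\in\G$ preserves $D$ and hence its parallel distributions $\V_1,\V_2$, it maps leaves to leaves, and it does so by a homothety of ratio $\rho(f)$. Combining the two facts, I obtain pairs of leaves that are simultaneously isometric (from the product structure) and homothetic with ratio $\rho(f)\neq1$ (from $\G$), and a Riemannian manifold admitting a non-trivial self-homothety that is also an isometry onto its image must be flat. Finally, flatness of all leaves of both foliations, together with the local product decomposition $g_0=g_0|_{\V_1}\oplus g_0|_{\V_2}$, yields that $g_0$ itself is flat, whence $D$ is flat.

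\textbf{The main obstacle.}
The delicate point is establishing completeness of the leaves. Unlike the genuine cone, where the only incomplete geodesics are the radial rays (Example \ref{con}), here the incomplete geodesics can a priori behave wildly near $\o$, and a parallel leaf could in principle be dragged into the singularity along a non-radial direction. Controlling this is precisely where tameness must be exploited in full strength: I expect to argue that if a geodesic tangent to $\V_1$ had finite life-time, then by quasi-linearity of $\mu$ and the $\G$-equivariance one could transport the situation by homotheties to produce geodesics of unbounded life-time inside a compact set of unit vectors, contradicting Proposition \ref{teim}. Making this transport argument precise — identifying exactly which compact set of directions to use and how the parallel splitting interacts with the homothety flow so that the bound on $\mu$ translates into leaf-completeness — is the technical heart of the proof, and it is where the bulk of the estimates will go.
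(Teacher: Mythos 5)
Your overall architecture matches the paper's (work on $(\mm,g_0)$, use tameness to produce complete leaves, note that $\G$ maps leaves to leaves homothetically, and conclude via the flatness of a complete manifold admitting a strict self-homothety), but two of your steps, as stated, would fail. First, the completeness claim is wrong and the mechanism you propose for it proves too much. It is not true that leaves through points near $\o$ are complete, nor that geodesics tangent to $V_1$ cannot have finite life-time: already in the flat model $\R^p\times\R^q\sm\{0\}$ (the minimal cover of a flat Hopf-type quotient) the $V_1$-leaf through $x$ is incomplete whenever $x\in V_1$. The paper in fact \emph{needs} incomplete leaves tangent to each $V_i$ and must prove they exist (Lemma \ref{exist}, via a separate ``slope reduction'' argument you do not address). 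The correct completeness statement is conditional (Proposition \ref{complete}): if the maximal leaf $M_1$ of $V_1$ through $x$ is \emph{incomplete}, then every $V_2$-leaf meeting $M_1$ is complete. Its proof is not a contradiction via Proposition \ref{teim} but an iteration: along a geodesic $\c(t)\to\o$ inside $M_1$, tameness gives $\dt(\c(t))\ge \k t$, so $\exp_{\c(t)}sX$ exists for $|s|\le \k t$; by Lemma \ref{geo} the resulting curves are again incomplete geodesics tending to $\o$, so the same bound applies to them and the interval of definition grows successively to $[0,2\k t]$, $[0,3\k t]$, etc. Your blanket argument, if it worked, would make every leaf complete and leave nothing to anchor the rest of the proof on.

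Second, you assert that combining ``the leaves in the family are mutually isometric'' with ``$\G$ maps leaves to leaves'' yields a complete leaf that is simultaneously isometric and strictly homothetic to another, but you give no reason why some $f\in\G$, $f\ne\id$, should carry a leaf of the isometric family to another leaf of the \emph{same} family, i.e.\ one that also meets the incomplete leaf $M_1$ (the isometry between two $V_2$-leaves is only available when both can be joined through $M_1$, Lemma \ref{iso}). This is the missing pigeonhole step: one places pairwise disjoint balls $B_n=B_{\c(q^n)}(\rho q^n)$ along the incomplete geodesic (Lemma \ref{disj}), with $\rho$ small enough, via the quasi-linear de Rham radius $\s$ of Lemma \ref{polidisc}, that every $V_2$-leaf through $B=\cup_n B_n$ meets $M_1$; each $B_n$ contains a ball of \emph{uniform} radius for the compact quotient metric $g$ (Lemma \ref{ball}), so compactness of $M$ forces $f(B)\cap B\ne\emptyset$ for some $f\ne\id$ (Corollary \ref{inter}). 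Only then does one obtain $y=f(z)$ with $M_2(y)$ and $M_2(z)$ both in the isometric family, hence a strict self-homothety of the complete leaf $M_2(z)$ and its flatness. (A smaller point: before any of this one must make the de Rham splitting $\G$-invariant; the deck group may permute the factors, which the paper handles by first passing to a finite cover of $M$.) Without the packing argument the proof does not close.
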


\begin{proof} We start by showing that if the restricted holonomy $\Hol_0(D)$ 
is reducible, then there exists a finite covering $\bar M$ of $M$ on
which the full holonomy of the pull-back of $D$ has reducible
holonomy. In order to keep the argument as simple as possible, we will
not be very precise on the holonomy groups and consider them as
abstract groups rather than as transformation groups of each tangent
space.

Consider the metric $\t g_0$ on the universal cover $\t M$ of $M$
(defined up to a multiplicative constant), whose Levi-Civita covariant
derivative $\t\n$ is the pull-back of $D$ to $\t M$. The holonomy of
$\t\n$ is clearly equal to the restricted holonomy of $D$. By Theorem
IV.5.4 in \cite{kn1}, the tangent bundle of $\t M$ splits in a direct sum
$T\t M=T_0\oplus\ldots \oplus T_m$ of $\t\n$-parallel sub-bundles and
the holonomy group of $\t\n$ satisfies $\Hol(\t\n)=H_1\times \ldots\times
H_m$, where $H_i$ acts irreducibly on $T_i$ and trivially on $T_j$ for
$j\ne i$ ($T_0$ being the flat component).  Moreover this
decomposition is unique up to a permutation of the set
$\{1,\ldots,m\}$ (such permutations may occur if some of the factors
$H_i$ coincide).

By Lemma \ref{l21}, every element $f\in\A$ of the deck transformation
group of the covering $\t M\to M$ is affine with respect to $\t\n$, so
there exists a permutation $\s_f$ of $\{1,\ldots,m\}$ such that
$f_*(T_i)=T_{\s_f(i)}$. Let $\B\subset \A$ be the kernel of the group
homomorphism $\A\to S_m$ given by $f\mapsto \s_f$. The metric $\t g_0$ and
the connection $\t\n$ on $\t M$ descend to a conformal structure $\bar
c$ and a Weyl connection $\bar D$ on the quotient $\bar M:=M/\B$, which
is a finite covering of $M$ with group $\A/\B\subset S_m$. By
construction, the holonomy group of $\bar D$ on $\bar M$ is reducible.

Replacing $(M,c,D)$ by $(\bar M,\bar c,\bar D)$, we can from now on
assume that the full holonomy of $D$ is reducible. This implies that
the tangent bundle of the minimal Riemannian cover $(\mm,g_0)$ of
$(M,c,D)$ splits in a direct sum of orthogonal distributions
$TM=V_1\oplus V_2$, parallel with respect to the Levi-Civita
connection $\n^0=D$ of $g_0$. These distributions are integrable,
hence define two orthogonal (and complementary) foliations on $\mm$.

We will use the notion {\em maximal leaf} $M_i$, $i=1,2$, through
$x\in \mm$ to denote the set of points that can be connected to $x$ by
means of a smooth curve tangent to $V_i$. It is a standard fact that $M_i$ have smooth structures such that
$M_i\to M_0$ are immersions (although $M_i$ are not necessarily submanifolds of $\mm$).

We start with two preliminary results which hold on every (not
necessarily complete) reducible Riemannian manifold $(\mm,g_0)$.
The first one is an elementary consequence of the local de Rham decomposition theorem.

\begin{elem} \label{exp-loc}
Let $U_1$ be a local leaf of $V_1$ (connected but not necessarily complete) and assume that $X\in
\Gamma(V_2)$ is a parallel vector field along $U_1$. Assume moreover that $\exp_x tX$
is defined for all $x\in U_1$ and $t\in[0,1]$. Then $x\mapsto
\psi(x):=\exp_x X$ maps $U_1$ locally isometrically onto its image $U'_1$ (which is itself a leaf of $V_1$).
\end{elem}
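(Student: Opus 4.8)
The plan is to reduce everything to a single Jacobi field computation showing that $d\psi_x$ is a linear isometry of $T_xU_1=V_1(x)$ onto its image at every $x\in U_1$; this exhibits $\psi$ as a local isometry, hence as an isometry of $U_1$ onto its image (endowed with the induced metric). First I would fix $x\in U_1$ and $v\in V_1(x)$, choose a curve $u\colon(-\e,\e)\to U_1$ with $u(0)=x$ and $\dot u(0)=v$, and form the geodesic variation $\Phi(s,t):=\exp_{u(s)}(tX(u(s)))$, which is defined for $t\in[0,1]$ by hypothesis. Each $t\mapsto\Phi(s,t)$ being a geodesic, the variation field $J(t):=\frac{\d\Phi}{\d s}(0,t)$ is a Jacobi field along $\gamma(t):=\exp_x(tX)$, and $d\psi_x(v)=J(1)$. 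Its initial conditions are $J(0)=v\in V_1(x)$ and $J'(0)=\n_vX=0$, the latter precisely because $X$ is parallel along $U_1$.

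The key geometric input I would invoke is that a parallel orthogonal splitting $T\mm=V_1\oplus V_2$ forces the curvature to vanish on mixed pairs, $R(A,B)=0$ for $A\in V_1$, $B\in V_2$. This follows because $R(A,B)$ preserves each parallel distribution: for $C\in V_1$ one has $R(A,B)C\in V_1$, so $g_0(R(A,B)C,E)=0$ when $E\in V_2$, while for $E\in V_1$ the pair symmetry $g_0(R(A,B)C,E)=g_0(R(C,E)A,B)$ together with $R(C,E)A\in V_1\perp B$ again gives zero; the case $C\in V_2$ is identical. Since $\dot\gamma(0)=X\in V_2$ and parallel transport along $\gamma$ preserves the splitting, $\dot\gamma(t)\in V_2$ for all $t$.

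Next I would decompose $J=J^{(1)}+J^{(2)}$ along $V_1\oplus V_2$; as the distributions are parallel, covariant differentiation respects the decomposition. In the Jacobi equation $J''+R(J,\dot\gamma)\dot\gamma=0$, the term $R(J^{(1)},\dot\gamma)\dot\gamma$ vanishes by the mixed-pair vanishing, while $R(J^{(2)},\dot\gamma)\dot\gamma$ stays in $V_2$; projecting thus decouples the equation into $(J^{(1)})''=0$ and $(J^{(2)})''+R(J^{(2)},\dot\gamma)\dot\gamma=0$. The split initial data are $J^{(1)}(0)=v$, $(J^{(1)})'(0)=0$, and $J^{(2)}(0)=(J^{(2)})'(0)=0$. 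By uniqueness for linear second-order ODEs I get $J^{(2)}\equiv0$, while $(J^{(1)})''=0$ with $(J^{(1)})'(0)=0$ forces $J=J^{(1)}$ to be parallel along $\gamma$. Therefore $d\psi_x(v)=J(1)$ is the parallel transport of $v$ along $\gamma$, which is a linear isometry (and carries $V_1(x)$ into $V_1(\psi(x))$), completing the argument.

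I expect the main obstacle to be conceptual rather than computational: the whole proof hinges on the decoupling of the Jacobi equation, which in turn rests on the vanishing of the mixed curvature terms coming from the de Rham type parallel splitting. Once that is isolated, the identification $d\psi_x(v)=J(1)$ and the ODE uniqueness are routine; the one place requiring care is the initial condition $J'(0)=\n_vX=0$, which is exactly where the hypothesis that $X$ is a \emph{parallel} section of $V_2$ along the leaf $U_1$ enters.
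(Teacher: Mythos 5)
Your Jacobi field computation is correct and is a genuinely different route from the paper's: the paper covers the geodesic $t\mapsto\exp_x(tX)$ by finitely many local de Rham product neighbourhoods $U_1(t_k)\times U_2(t_k)$ and writes $\psi$ as a composition of the obvious isometries between local leaves, whereas you get the same local statement in one stroke from the decoupling of the Jacobi equation. The mixed-curvature vanishing $R(V_1,V_2)=0$, the initial condition $J'(0)=\n_vX=0$, and the conclusion that $d\psi_x$ is parallel transport restricted to $V_1(x)$ are all sound, and this argument also delivers $d\psi_x(V_1(x))\subset V_1(\psi(x))$, so the image lies in a single leaf. For the purpose of proving that $\psi$ is a local isometry, your approach is arguably cleaner.

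However, there is a genuine gap at the last step: ``local isometry, hence an isometry of $U_1$ onto its image'' is not a valid inference. A local isometry between Riemannian manifolds need not be injective (any nontrivial Riemannian covering, or a complete geodesic wrapping around a flat cylinder, is a local isometry onto its image without being one-to-one), and here $U_1$ is an arbitrary, possibly non-simply-connected, possibly incomplete leaf. Injectivity is genuinely part of the content of the lemma: it is what is used later (Lemma \ref{iso}) to produce honest isometries between complete leaves of $V_2$, which are then composed with elements of $\G$ to obtain strict homotheties and invoke the Kobayashi--Nomizu flatness lemma. The paper closes this gap by showing that $Y:=\psi_*(X)$ is a well-defined parallel section of $V_2$ along the image leaf $U_1'$ and that $\t\psi:y\mapsto\exp_y(-Y)$ satisfies $\t\psi\circ\psi=\id_{U_1}$, so $\psi$ has a left inverse. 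You would need to add this (or an equivalent) argument; note that the well-definedness of $Y$ on the image is exactly the point that requires the local product structure (or your observation that $d\psi$ maps $V_1$ to $V_1$ together with local invertibility), and is not a formal consequence of $\psi$ being an isometric immersion.
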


%

\begin{proof} 
Consider the map $\f:U_1\times
[0,1]\to \mm$ defined by $\f(x,t):=\exp_x tX$. Define $X_{(x,t)}\in
T_{\f(x,t)}\mm$ by
$$X_{(x,t)}:=\frac{d}{ds}\bigg|_{s=t}\f(x,s).$$ In other words,
$X_{(x,t)}$ is the tangent vector to the geodesic $s\to \exp_x sX$ at
$s=t$, so we clearly have the relation
\beq\label{pse}\f(x,t+s)=\exp_{\f(x,t)}sX_{(x,t)}.  \eeq Let us fix
$x\in U_1$ and denote $x_t:=\f(x,t)$. The local de Rham decomposition
theorem (Proposition IV.5.2 in \cite{kn1}) states that each $x_t$
has a neighborhood $U(t)$ in $\mm$ isometric to a Riemannian product
$U(t)\simeq U_1(t)\times U_2(t)$, where $U_1(t)$ and $U_2(t)$ are
local leaves of $V_1$ and $V_2$ through $x_t$.
  
  The geodesic segment $\f(\{x\}\times [0,1])$ is compact, so it can
  be covered by a finite number of neighborhoods $U_2(s_1),\ldots
  ,U_2(s_n)$, with $0=s_1<\ldots<s_n=1$. Choose now $t_i\in
  (s_i,s_{i+1})$ $\forall\; i=1,\dots,n-1$, such that $\f(x,t_i)\in
  U_2(s_i)\cap U_2(s_{i+1})$, and set $t_n:=1$. For $k=1,\ldots,n$,
  let $V_k$ be the open subset of $U_1$ defined by
  $$V_k:=\{y\in U_1\ |\ \f(y,s_k)\in U(s_k)\}.$$ We denote by $V$ the
  intersection of the $V_k$'s and by $W_k$ the subset of $U(t_k)$
  given by $W_k:=\f(V\times\{t_k\})$.\vs

\begin{center}\begin{picture}(0,0)%
\includegraphics{f2.pstex}%
\end{picture}%
\setlength{\unitlength}{3947sp}%
\begingroup\makeatletter\ifx\SetFigFont\undefined%
\gdef\SetFigFont#1#2#3#4#5{%
  \reset@font\fontsize{#1}{#2pt}%
  \fontfamily{#3}\fontseries{#4}\fontshape{#5}%
  \selectfont}%
\fi\endgroup%
\begin{picture}(4500,1374)(-74,-1573)
\put(3076,-736){\makebox(0,0)[lb]{\smash{\SetFigFont{12}{14.4}{\rmdefault}{\mddefault}{\updefault}{\color[rgb]{0,0,0}$X_{k+1}$}%
}}}
\put(3151,-1186){\makebox(0,0)[lb]{\smash{\SetFigFont{12}{14.4}{\rmdefault}{\mddefault}{\updefault}{\color[rgb]{0,0,0}$U(s_{k+2})$}%
}}}
\put(2176,-736){\makebox(0,0)[lb]{\smash{\SetFigFont{12}{14.4}{\rmdefault}{\mddefault}{\updefault}{\color[rgb]{0,0,0}$X_k$}%
}}}
\put(1276,-1336){\makebox(0,0)[lb]{\smash{\SetFigFont{12}{14.4}{\rmdefault}{\mddefault}{\updefault}{\color[rgb]{0,0,0}$U(s_k)$}%
}}}
\put(1876,-586){\makebox(0,0)[lb]{\smash{\SetFigFont{12}{14.4}{\rmdefault}{\mddefault}{\updefault}{\color[rgb]{0,0,0}$\! W_k$}%
}}}
\put(676,-361){\makebox(0,0)[lb]{\smash{\SetFigFont{12}{14.4}{\rmdefault}{\mddefault}{\updefault}{\color[rgb]{0,0,0}$U_1$}%
}}}
\put(2176,-1486){\makebox(0,0)[lb]{\smash{\SetFigFont{12}{14.4}{\rmdefault}{\mddefault}{\updefault}{\color[rgb]{0,0,0}$U(s_{k+1})$}%
}}}
\put(4051,-436){\makebox(0,0)[lb]{\smash{\SetFigFont{12}{14.4}{\rmdefault}{\mddefault}{\updefault}{\color[rgb]{0,0,0}$U'_1$}%
}}}
\put(826,-736){\makebox(0,0)[lb]{\smash{\SetFigFont{12}{14.4}{\rmdefault}{\mddefault}{\updefault}{\color[rgb]{0,0,0}$X$}%
}}}
\put(376,-736){\makebox(0,0)[lb]{\smash{\SetFigFont{12}{14.4}{\rmdefault}{\mddefault}{\updefault}{\color[rgb]{0,0,0}$V$}%
}}}
\put(-74,-1036){\makebox(0,0)[lb]{\smash{\SetFigFont{12}{14.4}{\rmdefault}{\mddefault}{\updefault}{\color[rgb]{0,0,0}$x_0=x$}%
}}}
\put(4426,-736){\makebox(0,0)[lb]{\smash{\SetFigFont{12}{14.4}{\rmdefault}{\mddefault}{\updefault}{\color[rgb]{0,0,0}$W_n$}%
}}}
\put(4426,-1036){\makebox(0,0)[lb]{\smash{\SetFigFont{12}{14.4}{\rmdefault}{\mddefault}{\updefault}{\color[rgb]{0,0,0}$x_1$}%
}}}
\put(2551,-436){\makebox(0,0)[lb]{\smash{\SetFigFont{12}{14.4}{\rmdefault}{\mddefault}{\updefault}{\color[rgb]{0,0,0}$W_{\! k+1}$}%
}}}
\end{picture}
\vs
{\sc Figure 1. } Stepwise 
exponentiation along the geodesic segment $\f\left(\{x\}\times
[0,1]\right)$.\end{center}
  
  Consider the vector field $X_k$ along $W_k$ whose value at
  $\f(y,t_k)$ is $X_{(y,t_k)}$.  By construction, there exists a
  bijection $\f_k:W_k\to W_{k+1}$ defined by
  $\f_k(\f(y,t_k)):=\f(y,t_{k+1})$ for all $y\in V$.
  
We claim that for every $k=1,\ldots, n$, \bi
\item $\f_{k-1}$ is an isometry;
\item the vector field $X_{k}$ is parallel along $W_{k}$;
\item $W_{k}$ is an open subset of the leaf $U_1(t_{k})$.  \ei For
$k=1$, the first statement is empty, and the other two hold by
hypothesis. 
  
  Assume that the claim holds for some $k\ge 1$.  Since $X_k$ is
  parallel along $W_k$, it is constant in the product coordinates on
  $W_k\times U_2(s_{k})\subset U(s_{k})$ ({\em i.e.} there exists $Z\in
  T_{x_{t_k}}U_2(s_k)$ such that $(X_k)_{(z,x_{t_k})}=(0,Z)$, $\forall\;
  z\in W_k$).  By \eqref{pse} we have
  $\f_k(\f(y,t_k))=\exp_{\f(y,t_k)}(t_{k+1}-t_k)X_{(y,t_k)}$ so in the
  product coordinates $\f_k(z,x_{t_{k}})=(z,x_{t_{k+1}})$ for all
  $z\in W_k$, showing that $W_{k}\subset U_1(t_{k})$ and that $\f_k$
  is an isometry. Moreover $X_{k+1}$ is constant along $W_{k+1}$ in
  these coordinates, thus proving the induction step.

We have shown that in the neighborhood $V$ of $x$ in $U_1$, the map
$x\mapsto \psi(x)= \exp_x X$ is the composition of $n-1$ isometries
$\f_{n-1}\circ\cdots\circ \f_1$ between local leaves of the
distribution $V_1$. As this holds in the neighborhood of every point
$x$ of $U_1$, $\psi$ is a local isometry from $U_1$ to its image
$U'_1$. In particular, this shows that $U'_1$ is an open subset of the
complete integral leaf of $V_1$ passing through $\f(x,1)$.
\r

The next result, which is elementary as well,
shows that exponentiating a geodesic tangent to $V_1$ in the direction
of a constant or affine Jacobi field tangent to $V_2$ yields another
geodesic whenever it is defined.

\begin{elem}\label{geo}
Let $\c:[a,b]\to M_0$ be a geodesic tangent to $V_1$ parametrized by
  arc-length and let $X\in T_{\c(a)}M_0$ be a vector tangent to
  $V_2$. Extend $X$ to a parallel vector field along $\c$.

{\em (i)} Assume that $\c_s(t):=\exp_{\c(t)}(sX)$ is well-defined for
  all $t\in[a,b]$ and $s\in[0,1]$. Then $\c_1(t)$ is a geodesic in
  $M_0$ and its tangent vector at $t$ is the parallel transport of
  $\dot \c(t)$ at $\exp_{\c(t)}(X)$ along the geodesic $s\mapsto
  \exp_{\c(t)}(sX)$.

{\em (ii)} Assume that $\c^X(t):=\exp_{\c(t)}(tX)$ is well-defined for
  all $t\in[a,b]$. Then $\c^X(t)$ is a geodesic in $M_0$ and the
  projections of $\dot \c^X(t)$ onto $V_1$ and $V_2$ are parallel
  vector fields along $\c^X$ of length $1$ and $|X|$ respectively.

\end{elem}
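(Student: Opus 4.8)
The plan is to deduce both statements from the reducibility of the curvature tensor together with Lemma \ref{exp-loc}. Recall first that, since $V_1$ and $V_2$ are parallel and complementary, each leaf is totally geodesic and the curvature carries no mixed component: $R(A,B)=0$ whenever $A$ is tangent to $V_1$ and $B$ to $V_2$ (locally $(\mm,g_0)$ is a Riemannian product and its curvature is the direct sum of the curvatures of the two factors). Throughout I would extend $X$ to the $\c$-parallel field $X(t)$ along $\c$, which stays tangent to $V_2$, and consider the variation
$$F(t,s):=\exp_{\c(t)}(sX),$$
smooth on an open set containing the relevant parameter region; I write $\sigma_t(s):=F(t,s)$ for the $V_2$-geodesics it sweeps out and $\frac{D}{\partial t},\frac{D}{\partial s}$ for the covariant derivatives of vector fields along $F$ in the two parameter directions.

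For (i) I would first analyse $J:=\partial_t F$, which for each fixed $t$ is the Jacobi field along $\sigma_t$ with $J(t,0)=\dot\c(t)\in V_1$ and $\frac{D}{\partial s}J(t,0)=\frac{D}{\partial t}\partial_s F(t,0)=\frac{D}{\partial t}X(t)=0$, using the symmetry lemma $\frac{D}{\partial s}\partial_t F=\frac{D}{\partial t}\partial_s F$ and the parallelism of $X$ along $\c$. Decomposing $J=J^1+J^2$ along the parallel splitting and using $\dot\sigma_t\in V_2$ together with the vanishing of the mixed curvature, the Jacobi equation decouples into $\frac{D^2}{\partial s^2}J^1=0$ and a homogeneous Jacobi equation for $J^2$ with vanishing Cauchy data; hence $J^2\equiv 0$ and $J^1$ is parallel along $\sigma_t$. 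Thus $J(t,s)$ is exactly the parallel transport of $\dot\c(t)$ along $\sigma_t$, which is the asserted formula for $\dot\c_1(t)=J(t,1)$; in particular $\frac{D}{\partial s}\partial_t F=0$. That each $\c_s$ is a geodesic I would obtain from Lemma \ref{exp-loc}: extending $X$ to a parallel $V_2$-field on a neighbourhood of $\c([a,b])$ inside its leaf, the map $x\mapsto\exp_x(sX)$ is an isometry of open pieces of leaves of $V_1$, so it carries the geodesic $\c$ of the totally geodesic leaf to the geodesic $\c_s$; equivalently $\frac{D}{\partial t}\partial_t F=0$.

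For (ii) I would differentiate along the diagonal. Writing $\c^X(t)=F(t,t)$, the chain rule $\frac{D}{dt}[W(t,t)]=\frac{D}{\partial t}W+\frac{D}{\partial s}W$ gives $\dot\c^X=\partial_t F+\partial_s F$ on the diagonal, whose $V_1$- and $V_2$-components are $\partial_t F=J$ (of length $|\dot\c|=1$) and $\partial_s F=\dot\sigma_t$ (of length $|X|$). Feeding in the four identities already established in (i), namely $\frac{D}{\partial t}\partial_t F=0$ (each $\c_s$ is a geodesic), $\frac{D}{\partial s}\partial_s F=0$ (each $\sigma_t$ is a geodesic), and $\frac{D}{\partial s}\partial_t F=\frac{D}{\partial t}\partial_s F=0$ (parallelism of $J$ along $\sigma_t$), one gets at once $\frac{D}{dt}\dot\c^X=0$, so $\c^X$ is a geodesic; the same identities yield $\frac{D}{dt}[\partial_t F(t,t)]=0$ and $\frac{D}{dt}[\partial_s F(t,t)]=0$, i.e. both projections are parallel along $\c^X$, with the stated lengths.

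The main obstacle I anticipate is not any single computation but making the variational argument rigorous away from a full parameter rectangle: in (ii) the map $F$ is a priori controlled only over the triangular region $0\le s\le t$, so I must invoke the openness of the domain of $\exp$ to guarantee that $F$ extends to a genuine two-dimensional neighbourhood of each diagonal point, where the symmetry lemma and the identities of (i) apply pointwise. The conceptual heart, however, is the decoupling of the Jacobi equation in (i), which rests entirely on the vanishing of the mixed curvature of a reducible metric.
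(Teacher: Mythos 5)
Your proof is correct, and it reaches both conclusions by a route that genuinely differs from the paper's. For (i) the paper argues purely synthetically: Lemma \ref{exp-loc} exhibits $x\mapsto\exp_x(sX)$ as an isometry between open pieces of (totally geodesic) leaves of $V_1$, which gives the geodesic claim at once, and the parallel-transport statement is read off from the product-coordinate description established in the inductive claim inside that lemma's proof. You derive the same parallel-transport statement instead from the Jacobi equation along the transverse geodesics $\sigma_t$, decoupled by the vanishing of the mixed curvature, with Cauchy data $J(0)=\dot\gamma(t)\in V_1$, $\tfrac{D}{\partial s}J(0)=0$; this is a clean alternative whose payoff is that it makes the identity $\tfrac{D}{\partial s}\partial_tF=0$ explicit for reuse in (ii). For (ii) the paper first treats the global Riemannian product, where $\gamma^X=(\gamma_1,\gamma_2)$ is visibly a geodesic with parallel component projections, and then localizes around each $\gamma^X(t_0)$ via the local de Rham chart; your direct computation of $\tfrac{D}{dt}\dot\gamma^X$ as a sum of four vanishing second covariant derivatives replaces that reduction and is shorter once the identities of (i) are in place, at the price of being less geometric. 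The one delicate point --- that $F(t,s)$ is a priori controlled only on the region $0\le s\le t$ --- is exactly the point the paper also has to address, and your appeal to the openness of the domain of the exponential map to get a full two-parameter neighbourhood of each diagonal point is the same device used there, so no gap remains.
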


\begin{proof}
(i) The first statement follows immediately from Lemma
\ref{exp-loc}. The second one is a consequence of the inductive claim used to
prove the same lemma.

(ii) Assume first that $M_0$ is a global Riemannian product
$M_0=M_1\times M_2$. If $\c(a)=(m_1,m_2)$, then $\c(t)=(\c_1(t),m_2)$
for some geodesic $\c_1$ in $M_1$ parametrized by arc-length. The
vector field $X$ along $\c$ can be written $X=(0,X_2)$, where $X_2$ is
a constant vector tangent to $M_2$ at $m_2$. Denoting by
$\c_2(t)=\exp_{m_2}tX_2$ the geodesic in $M_2$ starting at $m_2$ with
initial speed $X_2$, then $\c^X(t)=(\c_1(t),\c_2(t))$, is a
geodesic in $M_0$. The projections of $\dot \c^X(t)$ onto $V_1$ and
$V_2$ are $(\dot \c_1,0)$ and $(0,\dot\c_2)$, which are clearly
parallel vector fields along $\c^X$ of length $1$ and $|X|$
respectively.

Back to the general case, it is of course enough to show that the
statement holds in the neighborhood of every point $\c^X(t_0)$.
Since the domain of definition of the exponential on the normal bundle
of a geodesic is open, the curve $c(t):=\exp_{\c(t)}(t_0X)$ is
well-defined for $t$ near $t_0$. By Lemma \ref{exp-loc}, $c(t)$ is a
geodesic through $x:=\c^X(t_0)$, parametrized by arc-length. Moreover,
if $Y$ denotes the parallel vector field along $c(t)$ with
$Y_x=d\exp_{\c(t_0)}(t_0X)$, Lemma \ref{exp-loc} also shows that
$Y_{\c(t)}=d\exp_{\c(t)}(t_0X)$, so by \eqref{pse},
$\c^X(t)=\exp_{\c(t)}((t-t_0)Y)$.  \vs 

\begin{center}\begin{picture}(0,0)%
\includegraphics{f3.pstex}%
\end{picture}%
\setlength{\unitlength}{3947sp}%
\begingroup\makeatletter\ifx\SetFigFont\undefined%
\gdef\SetFigFont#1#2#3#4#5{%
  \reset@font\fontsize{#1}{#2pt}%
  \fontfamily{#3}\fontseries{#4}\fontshape{#5}%
  \selectfont}%
\fi\endgroup%
\begin{picture}(3634,1580)(7029,-5819)
\put(8326,-5161){\makebox(0,0)[lb]{\smash{\SetFigFont{12}{14.4}{\rmdefault}{\mddefault}{\updefault}{\color[rgb]{0,0,0}$c$}%
}}}
\put(8026,-5761){\makebox(0,0)[lb]{\smash{\SetFigFont{12}{14.4}{\rmdefault}{\mddefault}{\updefault}{\color[rgb]{0,0,0}$\gamma(t_0)$}%
}}}
\put(7126,-5761){\makebox(0,0)[lb]{\smash{\SetFigFont{12}{14.4}{\rmdefault}{\mddefault}{\updefault}{\color[rgb]{0,0,0}$\c$}%
}}}
\put(8026,-4936){\makebox(0,0)[lb]{\smash{\SetFigFont{12}{14.4}{\rmdefault}{\mddefault}{\updefault}{\color[rgb]{0,0,0}$x$}%
}}}
\put(7726,-4861){\makebox(0,0)[lb]{\smash{\SetFigFont{12}{14.4}{\rmdefault}{\mddefault}{\updefault}{\color[rgb]{0,0,0}$Y$}%
}}}
\put(7351,-4711){\makebox(0,0)[lb]{\smash{\SetFigFont{12}{14.4}{\rmdefault}{\mddefault}{\updefault}{\color[rgb]{0,0,0}$U$}%
}}}
\put(9376,-4561){\makebox(0,0)[lb]{\smash{\SetFigFont{12}{14.4}{\rmdefault}{\mddefault}{\updefault}{\color[rgb]{0,0,0}$\gamma^X(t)=\exp_{\c(t)}tX$}%
}}}
\put(9001,-5011){\makebox(0,0)[lb]{\smash{\SetFigFont{12}{14.4}{\rmdefault}{\mddefault}{\updefault}{\color[rgb]{0,0,0}$tX$}%
}}}
\put(9301,-5761){\makebox(0,0)[lb]{\smash{\SetFigFont{12}{14.4}{\rmdefault}{\mddefault}{\updefault}{\color[rgb]{0,0,0}$\c(t)$}%
}}}
\end{picture}
\vs
{\sc Figure 2.} Idea of the proof of Lemma \ref{geo} (ii).
\end{center}

By the local de Rham theorem, the
point $x$ has a neighborhood $U$ isometric to $U_1\times U_2$, where
$U_i$ is some local leaf of $V_i$ through $x$. As $\c^X(t)$ lies in
$U$ for $t$ near $t_0$, the statement follows from the first part of
the proof.
\end{proof}

We assume from now on that $D$ is a closed tame Weyl connection on a
compact conformal manifold $(M,c)$ which has reducible holonomy and
that $(\mm,g_0)$ is the minimal Riemannian cover of $(M,c,D)$. We
denote as before by $d$ the distance induced by $g_0$ on $\mm$, by
$\o$ the singularity of $\mm$ and by $\dt$ the distance to the
singularity: $\dt(x):=d(x,\o)$.  Since $D=\n^0$, $g_0$ has reducible
holonomy, so the above results apply to the present setting.  We will
need the following quantitative version of the local de Rham
decomposition theorem for $(\mm,g_0)$.

\begin{elem}\label{polidisc}
There exists a quasi-linear function $\s:\mm\to \RM^*_+$ such that
each point $x\in\mm$ has a neighborhood $U$ and an isometry $F:U\ra
B^1_x(\s(x))\times B^2_x(\s(x))$, with $F(x)=(x,x)$, where $B^i_x(r)$
is the ball of radius $r$ around $x$ in the maximal leaf $M_i$ though
$x$, tangent to the distribution $V_i$.
\end{elem}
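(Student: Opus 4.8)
The plan is to realize $\s$ as a fixed fraction of the \emph{de Rham radius function} $\s_0\colon\mm\to(0,+\infty]$, where $\s_0(x)$ is the supremum of those $r>0$ for which the following product chart at $x$ is a well-defined isometric embedding. The chart is built by sliding the intrinsic leaf ball $B^1_x(r)\subset M_1$ along the $V_2$-geodesics issuing from its points: given $y\in B^1_x(r)$ and a vector $X\in V_2$ at $x$ with $|X|<r$, parallel-transported inside $M_1$ to $X_y$ at $y$, one sets $F^{-1}(y,\exp_x X):=\exp_y X_y$. By Lemma \ref{exp-loc} each such slide is an isometry between leaves of $V_1$, and Lemma \ref{geo} identifies the transverse curves as $V_2$-geodesics of the correct length, so that the assembled map is an isometry of the Riemannian product $B^1_x(r)\times B^2_x(r)$ onto a neighbourhood of $x$; this is precisely the local de Rham decomposition (Proposition IV.5.2 in \cite{kn1}) made quantitative.

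First I would check that the chart is defined for small $r$, which is where Theorem \ref{t1} enters. A unit-speed geodesic of $(\mm,g_0)$ with finite life-time $L$ is Cauchy as the parameter tends to $L$, hence converges in the one-point completion $\mmm=\mm\cup\{\o\}$; it cannot converge inside $\mm$ (else it would extend), so it runs into $\o$, whence $L\ge\dt$ evaluated at its base point. Thus every geodesic from $y$ survives at least up to length $\dt(y)$, and since $\dt(y)\ge\dt(x)-r$ on $B^1_x(r)$, all the exponentials above are defined as soon as $r\le\dt(x)/2$. In particular the leaf balls $B^i_x(r)$ are genuine geodesic balls and $\s_0(x)>0$ for every $x$; note that $\s_0$ may equal $+\infty$ (the flat case), which is harmless below.

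Next I would exploit the homotheties. Each $f\in\G$ is affine for $\n^0=D$ and satisfies $d(f(a),f(b))=\rho(f)d(a,b)$, so it carries leaves to leaves (of $V_i$, or of $V_j$ if it interchanges the two factors) and maps a product chart of radius $r$ at $x$ to one of radius $\rho(f)r$ at $f(x)$. Since the two factors enter symmetrically with equal radius, this yields $\s_0(f(x))=\rho(f)\s_0(x)$, i.e. $\s_0$ is $\G$-equivariant of weight $1$. A stability argument (openness of the domain of $\exp$ together with persistence of the isometric-embedding property under small perturbations of $x$ and of $r$) shows $\s_0$ is lower semicontinuous, hence bounded below by a positive constant on a compact fundamental domain $\Omega$; as $\dt$ is continuous and positive on $\Omega$, the quotient $\s_0/\dt$ has a positive infimum $k_1$ there, and weight-$1$ equivariance propagates this to $\s_0\ge k_1\dt$ on all of $\mm$. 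Setting $\s:=\tfrac12\min(\s_0,\dt)$ gives $\s\le\tfrac12\dt$ and $\s\ge\tfrac12\min(k_1,1)\dt$, so $\s$ and $\s^{-1}$ are locally bounded, and Lemma \ref{l3.5} yields that $\s$ is quasi-linear. As $\s(x)<\s_0(x)$, the radius-$\s(x)$ chart is a genuine isometry $F\colon U\to B^1_x(\s(x))\times B^2_x(\s(x))$ with $F(x)=(x,x)$, as required.

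The main obstacle I expect is the uniformity, namely the lower semicontinuity of $\s_0$ and the resulting positive lower bound for $\s_0/\dt$ on $\Omega$: one must rule out that the size of the de Rham product collapses faster than $\dt$ as one moves within $\Omega$. The conceptual point that makes this tractable is that the incompleteness of $(\mm,g_0)$ is concentrated at the \emph{single} point $\o$ (Theorem \ref{t1}), so leaf geodesics cannot terminate prematurely except by approaching $\o$, while the weight-$1$ scaling transports a single uniform estimate on $\Omega$ to the whole manifold. Note that this reasoning rests on Theorem \ref{t1} and on equivariance rather than on tameness directly; tameness will be invoked in the subsequent steps of Theorem \ref{main}, not for the construction of the chart itself.
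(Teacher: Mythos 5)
Your argument is correct and follows essentially the same route as the paper: reduce to a compact fundamental domain via the weight-$1$ equivariance of the radius (Lemma \ref{l3.5}), obtain a uniform positive product radius there from the local de Rham theorem, and extend equivariantly. The only cosmetic difference is that you run the compactness step through lower semicontinuity of a pointwise de Rham radius function $\s_0$ (and an auxiliary life-time bound from Theorem \ref{t1}), where the paper simply covers the fundamental domain by finitely many de Rham charts and takes the minimum of their radii.
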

\begin{proof}
By Lemma \ref{l3.5}, it is enough to define $\sigma$ on a relatively
compact fundamental domain $K\subset \mm$ of the covering $\mm\to M$,
and to extend it to $\mm$ in a $\G$-equivariant way by
$\sigma(f(x))=\rho(f)\sigma(x)$ for all $f\in \G$ and $x\in K$.

The local de Rham theorem ensures that for every $x\in \mm$ there
exist neighborhoods $U_i(x)$ of $x$ in the maximal leaf $M_i(x)$
though $x$, tangent to the distribution $V_i$, such that $U_1(x)\times
U_2(x)$ is isometric to a neighborhood $U(x)$ of $x$ in $\mm$. Take a
finite number of points $x_i$ such that $\overline K\subset \cup_i
U(x_i)$. Each neighborhood $U_1(x_i)$ and $U_2(x_i)$ contains a
geodesic ball centered in $x_i$ of radius $r_1(x_i)$ and $r_2(x_i)$
respectively.  It is then enough to define $\sigma$ on $K$ to be the
minimum of all these radii.
\r

We now come to a key point of the proof of Theorem \ref{main}, namely
the existence of {\em complete} maximal leaves tangent to the
distributions $V_i$.

\begin{epr}\label{complete}
If $M_1$ is a maximal leaf of $V_1$ which is incomplete, then every
maximal leaf of $V_2$ which intersects $M_1$ is complete.
\end{epr}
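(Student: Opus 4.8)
The plan is to argue by contradiction. Since for every $p\in M_1$ the leaf $M_1(p)$ coincides with the given incomplete leaf $M_1$, the assertion is equivalent to saying that no point of $\mm$ can have both its $V_1$-leaf and its $V_2$-leaf incomplete; so I fix a point $p$ through which both $M_1(p)$ and $M_2(p)$ are incomplete and aim for a contradiction. First I would record the two facts used throughout: every incomplete geodesic has finite length, hence is a Cauchy sequence and converges to the unique completion point $\o$ (Theorem \ref{t1}); and, by tameness, $\mu$ is quasi-linear (Proposition \ref{tam}), so there is a constant $k_2$ with $\mu\le k_2\dt$. A preliminary step produces, from the incompleteness of the two leaves, a unit-speed incomplete $V_1$-geodesic $\c:[0,L)\to M_1(p)$ and a unit-speed incomplete $V_2$-geodesic $\tau:[0,S)\to M_2(p)$, both issuing from $p$ (moving an incomplete geodesic from another point of a leaf to $p$ along a compact path, which stays a positive distance from $\o$, is itself a mild instance of the transport construction below, since $V_i$ parallel forces the leaves to be totally geodesic).

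The heart of the argument is a two-parameter ``product'' map $F:[0,L)\times[0,S)\to\mm$ with $F(t,0)=\c(t)$ and $F(0,s)=\tau(s)$, built by exponentiating parallel vector fields as in Lemmas \ref{exp-loc} and \ref{geo}: for fixed $s$ the curve $\c_s:t\mapsto F(t,s)$ is a $V_1$-geodesic issuing from $\tau(s)$, and for fixed $t$ the curve $s\mapsto F(t,s)$ is a $V_2$-geodesic. Wherever $F$ is defined, those lemmas guarantee that $\c_s$ is isometric to $\c$ (and each $s$-curve isometric to $\tau$); transporting back and forth along compact subsegments shows $\c_s$ has life-time exactly $L$, for otherwise $\c=\c_0$ could be extended beyond its maximal parameter $L$. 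Thus each $\c_s$ is an incomplete $V_1$-geodesic of length $L$ emanating from $\tau(s)$.

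Granting that $F$ is defined on the whole rectangle, the contradiction is immediate: as $s\to S$ we have $\tau(s)\to\o$, hence $\dt(\tau(s))\to 0$, yet $\c_s$ yields $L\le\mu(\tau(s))\le k_2\dt(\tau(s))\to 0$, which is absurd since $L>0$.

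The main obstacle, and the only substantial point, is therefore to prove that $F$ is defined on all of $[0,L)\times[0,S)$, i.e. that the exponentials never leave the manifold at an interior parameter. I would run a connectedness argument on the open set $\Omega\subset[0,L)\times[0,S)$ on which $F$ is defined: it contains a neighbourhood of the two coordinate axes, and the quantitative local de Rham decomposition (Lemma \ref{polidisc}) supplies product charts of uniform size $\s\approx\dt$ along any compact sub-rectangle, which propagates definedness across chart overlaps. The crucial geometric input is that the exponential fails to extend only by reaching the completion point $\o$; in the local product picture $F$ attains $\o$ precisely when one of its two factor-geodesics reaches its own endpoint, so no interior point of the rectangle can map to $\o$, forcing $\Omega$ to have no interior boundary point and hence to be the full rectangle. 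Weak tameness (closedness of $\I^D$ in $TM_0\setminus\{0\}$) is what keeps the incompleteness locus under control as the parameters approach the boundary. I expect this definedness/propagation step — reconciling the local product structure with the single-point completion near $\o$ — to be the technically delicate part, the final contradiction with quasi-linearity of $\mu$ being then essentially immediate.
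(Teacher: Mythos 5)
Your overall strategy (transport along incomplete geodesics using the parallel splitting, then invoke quasi-linearity of $\mu$ to get a contradiction near $\o$) is the right one, and your final step --- $L\le\mu(\tau(s))\le k_2\dt(\tau(s))\to 0$ --- is sound \emph{granted} the existence of the curves $\c_s$. But the step you yourself flag as delicate, the definedness of $F$ on the whole rectangle, is a genuine gap, and the justification you sketch does not close it. The connectedness argument fails at exactly the dangerous place: the local de Rham product charts of Lemma \ref{polidisc} have size $\s\approx\dt$, which tends to $0$ along $\tau(s)\to\o$ and along $\c(t)\to\o$, so without further input you can only move a distance $O(\dt)$ in the transverse direction from a point near the singularity --- nowhere near enough to carry the whole curve $\c$ (of fixed length $L$) over to $\tau(s)$ for $s$ close to $S$. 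Your assertion that ``$F$ attains $\o$ precisely when one of its two factor-geodesics reaches its own endpoint'' presupposes a global product structure that does not exist; proving that no interior parameter of the rectangle maps to $\o$ is essentially equivalent to the statement you are trying to prove, and Lemma \ref{exp-loc} cannot be invoked for the extendability of $\c_s$ because its isometry conclusion is conditional on the very definedness in question.

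What is missing is the quantitative bootstrap that the paper uses. From tameness one gets not only $\mu\le k_2\dt$ but, applied to the backward half-geodesic, the lower bound $\dt(\c(t))\ge\k t$ ($\k=1/k_2$) along any unit-speed incomplete geodesic converging to $\o$ at $t=0$; since by Theorem \ref{t1} a geodesic can only fail to extend by running into $\o$, this means one may exponentiate a transverse parallel vector a distance $\k t$ from $\c(t)$. The key observation is then that the shifted curve $\c_1(t)=\exp_{\c(t)}\k tX$ is \emph{again} an incomplete geodesic converging to $\o$ (Lemma \ref{geo}\,(ii)), hence satisfies the same lower bound on $\dt$, so one can take another step of size $\k t$; iterating shows $\exp_{\c(t)}sX$ is defined for \emph{all} $s\in\R$, i.e.\ the $V_2$-geodesics through points of the incomplete leaf are complete. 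This gives the proposition directly, with no contradiction argument and no two-parameter rectangle. Without this iteration (or an equivalent device) your set $\Omega$ cannot be shown to contain a strip $[0,\ell]\times[0,S)$, and the proof does not go through.
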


\begin{proof}
Since $M_1$ is totally geodesic and incomplete, through every point 
$x\in M_1$ passes a geodesic
$\c:(0,r]\to M_1$ parametrized by arc-length, such that $\c(r)=x$, 
which can not be defined at $t=0$. Since $M_i$ is totally
geodesic in $\mm$, $\c$ is also a geodesic in $\mm$. By Theorem
\ref{t1}, we must have $\lim_{t\to 0}\c(t)=\o$ in $(\mmm,d)$.

 Let $X\in T_xM\cap V_2$ be any
unit normal vector to $M_1$ at $x$, extended as before to a parallel
vector field along $\c$. We claim that the geodesic generated by $X$
in $\mm$ is complete.

The crucial point here is the fact that every point $\c(t)$ is far
enough from the singularity $\o$, in order to ensure that the
exponential function is well-defined in a suitable neighborhood. More
precisely, Proposition \ref{tam} shows that there exists a constant $\k>0$ such that
for every $t\in(0,r]$, the
distance $\dt(\c(t))$ from $\c(t)$ to $\o$ (in $\mm$) is bounded from below by
${\k t}$. Consequently, $\exp_{\c(t)}sX$ is well-defined
for $|s|\le \k{t}$, so by Lemma \ref{geo} (ii), the curve
$\c_1:(0,r]\to \mm$ defined by $\c_1(t):=\exp_{\c(t)}\k tX$ is a
geodesic in $\mm$ with $|\dot\c_1|^2=1+{\k ^{2}}$. Moreover, the limit
in $\mmm$ of $\c_1(t)$ as $t\to 0$ is clearly $\o$. Proposition
\ref{tam} applied this time to the geodesic parametrized by arc-length
$\t\c_1$ defined by
$$\t\c_1(t):=\c_1((1+\k ^{2})^{-1/2}t)$$ yields
$\dt(\t\c_1(t))>\k{t}$, whence
$$\dt(\c_1(t))>(1+\k ^{2})^{1/2}\k{t}>\k{t}.$$ 
Consequently, for every
$t\in (0,r]$, every geodesic defined by a unit vector $Y\in
T_{\c_1(t)}\mm$ is defined at least up to the time $\k t$.  Taking $Y$
to be the speed vector of the geodesic $s\to \exp_{\c(t)}sX$ at $s=\k t$, we obtain
that this geodesic can actually be extended for $s\in[0,2\k t]$, for
any $t\in (0,r]$.  By Lemma \ref{geo} (ii), the curve $\c_2:(0,r]\to
\mm$ defined by $\c_2(t):=\exp_{\c(t)}2\k tX$ is thus a geodesic in
$\mm$ with $|\dot\c_2|^2=1+{4\k ^{2}}$. Again, we check that the
distance from $\c_2(t)$ to the singularity is at least $\k t$, showing
that for every $t\in (0,r]$, $\exp_{\c(t)}sX$ is well-defined for
$|s|\le {3\k t}$. Iterating the same argument shows that
the geodesic $\exp_{\c(t)}sX$ is actually defined for every $t\in (0,r]$ and for
every $s\in\R$.\vs

\begin{center}\begin{picture}(0,0)%
\includegraphics{f4.pstex}%
\end{picture}%
\setlength{\unitlength}{3947sp}%
\begingroup\makeatletter\ifx\SetFigFont\undefined%
\gdef\SetFigFont#1#2#3#4#5{%
  \reset@font\fontsize{#1}{#2pt}%
  \fontfamily{#3}\fontseries{#4}\fontshape{#5}%
  \selectfont}%
\fi\endgroup%
\begin{picture}(3689,1950)(9666,-5971)
\put(11739,-5006){\makebox(0,0)[lb]{\smash{\SetFigFont{12}{14.4}{\rmdefault}{\mddefault}{\updefault}{\color[rgb]{0,0,0}$\c_2(t)=\exp_{\c(t)}2\k tX$}%
}}}
\put(11739,-4607){\makebox(0,0)[lb]{\smash{\SetFigFont{12}{14.4}{\rmdefault}{\mddefault}{\updefault}{\color[rgb]{0,0,0}$\c_3(t)=\exp_{\c(t)}3\k tX$}%
}}}
\put(11744,-5481){\makebox(0,0)[lb]{\smash{\SetFigFont{12}{14.4}{\rmdefault}{\mddefault}{\updefault}{\color[rgb]{0,0,0}$\c_1(t)=\exp_{\c(t)}\k tX$}%
}}}
\put(11744,-5922){\makebox(0,0)[lb]{\smash{\SetFigFont{12}{14.4}{\rmdefault}{\mddefault}{\updefault}{\color[rgb]{0,0,0}$\c$}%
}}}
\put(9666,-5906){\makebox(0,0)[lb]{\smash{\SetFigFont{12}{14.4}{\rmdefault}{\mddefault}{\updefault}$\o$}}}
\end{picture}
\vs
{\sc Figure 3.} Idea of the proof of
  Proposition \ref{complete}.\end{center} 

In particular, for $t=r$, $\c(t)=x$, we have proved that the geodesic through $x$
tangent to $X\in V_2$ is complete. Since $X$ was arbitrarily chosen,
the whole integral leaf of $V_2$ through $x$ is thus complete.
\r

In order to apply this result, we need to show that incomplete leaves
actually exist.

\begin{elem}\label{exist}
There exist incomplete maximal leaves $M_i$ of $V_i$ or, equivalently,
incomplete geodesics $\c_i$ tangent to $V_i$ for $i=1$ and $i=2$.
\end{elem}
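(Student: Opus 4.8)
The plan is to build, out of a single incomplete geodesic running into the singularity, pure geodesics tangent to each $V_i$ that still reach $\o$; each such geodesic lies in a totally geodesic leaf, which is thereby forced to be incomplete. Since $(\mm,g_0)$ is incomplete, Theorem \ref{t1} furnishes a unit-speed incomplete geodesic $\c:(0,r]\to\mm$ with $\lim_{t\to0}\c(t)=\o$. As $T\mm=V_1\oplus V_2$ is parallel, the two components of $\dot\c$ are parallel along $\c$, so I write $\dot\c=aU+bW$ with $U,W$ unit parallel fields tangent to $V_1,V_2$ and $a^2+b^2=1$.

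First I would produce one incomplete leaf. If $\c$ happens to be tangent to some $V_i$ (i.e.\ $a=0$ or $b=0$), then $\c$ already lies in a leaf of $V_i$ that runs into $\o$, hence incomplete. Otherwise $a,b>0$, and I would \emph{shear off} the $V_2$--part of $\c$ by setting $\xi(t):=\exp_{\c(t)}(-bt\,W(t))$, the curve obtained by cancelling the accumulated $V_2$--displacement. The essential input is the tameness estimate underlying Proposition \ref{tam}: exactly as in the proof of Proposition \ref{complete} one obtains $\k>0$ with $\dt(\c(t))\ge\k t$, and because every incomplete geodesic issuing from a point $x$ must reach $\o$ and so has length $\ge\dt(x)$, the exponential map at $\c(t)$ is defined up to radius $\dt(\c(t))\ge\k t$. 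Removing the bounded $V_2$--displacement $bt$ in finitely many steps of size $\k t$ (re-deriving the lower bound for each intermediate geodesic as in Proposition \ref{complete}, and invoking Lemmas \ref{exp-loc} and \ref{geo} to certify that the resulting curve is a geodesic tangent to $V_1$) yields a genuine $V_1$--geodesic $\xi$ on $(0,r]$ with $\xi(t)\to\o$. Its $V_1$--leaf is then incomplete.

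Next I would obtain a leaf of the other foliation by a bootstrap. Having an incomplete $V_1$--leaf containing $\xi$, Proposition \ref{complete} guarantees that every $V_2$--geodesic issuing from $\xi$ is complete; hence, for $c$ as large as we like, $\eta_c(t):=\exp_{\xi(t)}(ct\,W'(t))$ is a well-defined geodesic (Lemma \ref{geo}(ii)) with $V_1$--speed $1$ and $V_2$--speed $c$, which again limits to $\o$ because its displacement $ct$ off $\xi$ tends to $0$. Shearing off the now comparatively small $V_1$--part of $\eta_c$ as in the previous step---when $c$ is large the $V_1$--displacement to cancel is a small fraction of the arc length, so it lies well inside the exponential domain and a single shear suffices---produces a pure $V_2$--geodesic reaching $\o$, whence an incomplete leaf of $V_2$. (If the original $\c$ is tangent to $V_2$, the roles of the two steps are simply interchanged.)

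The step I expect to be the main obstacle is the shearing. In Proposition \ref{complete} one only exponentiates \emph{outward} and merely needs the geodesics to stay defined; here I must instead move \emph{transversally} so as to cancel one parallel component, while guaranteeing that none of the intermediate curves plunges into $\o$ before the cancellation is complete. This is precisely where tameness is indispensable: the uniform bound $\dt(\c(t))\ge\k t$ from Proposition \ref{tam}, combined with the stepwise reconstruction supplied by Lemma \ref{geo}(ii), is what keeps every intermediate exponential well-defined and forces the limiting shadow to be an honest leaf geodesic rather than merely a continuous curve; verifying tangency to the correct distribution at each step, via Lemma \ref{exp-loc}, is the remaining technical point.
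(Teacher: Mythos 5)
Your proof is correct and follows essentially the same route as the paper's: starting from an incomplete geodesic converging to $\o$, you use the tameness lower bound $\dt(\c(t))\ge\k t$ to shear off one parallel component of $\dot\c$ in finitely many exponential steps, which is exactly the paper's iteration on the slope $q(\c)$. The only (harmless) divergence is the second foliation: the paper simply invokes the symmetry of $V_1$ and $V_2$ and re-runs the same shearing on the other component of the original geodesic, whereas you bootstrap through Proposition \ref{complete} by first tilting to a large $V_2$-slope and then shearing once --- slightly longer, but equally valid.
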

\begin{proof}
Let $\c:(0,1]\to \mm$ be an incomplete geodesic, such that
$\lim_{t\to 0}\c(t)=\o$ in $(\mmm,d)$. We may assume that $\c$ is not
tangent to $V_1$ or $V_2$: If for instance $\c$ were tangent to $V_1$,
we replace it by $\c^X$ given by Lemma \ref{geo} (ii), which is
neither tangent to $V_1$ nor to $V_2$.

Let $X_1$ and $X_2$ denote the projections of $\dot\c$ on $V_1$ and
$V_2$ respectively, which are clearly parallel along $\c$. We denote
by $r_i:=|X_i|\ne 0$ the norms of $X_i$ and by $r:=\sqrt{r_1^2+r_2^2}$
the norm of $\dot\c$. Define the {\em slope} of $\c$ to be the
quotient $q(\c):=r_1/r_2$.

We claim that $\c_1(t):=\exp_{\c(t)}(-tX_1)$ is defined for every
$t\in(0,1]$, and is an incomplete geodesic tangent to $V_2$, such that
$\lim_{t\to 0}\c_1(t)=\o$ in $(\mmm,d)$. The argument is similar to
that used in the proof of Proposition \ref{complete}: The exponential,
denoted $\c^s(t)$ of $-tsX_1$ at $\c(t)$ is well-defined by
Proposition \ref{tam} for $|s|\le \frac{\k r}{r_1}$. For every fixed
$s$ in this interval, $\c^s(t)$ is an incomplete geodesic and its
slope is (see Lemma \ref{geo} (ii)) $q(\c^s)=(1-s)q(\c)$. If $\frac{\k
r}{r_1}\ge 1$, which is equivalent to $q(\c)\le (\k^{-2}-1)^{-1/2}$,
the incomplete geodesic $\c^s$ has zero slope for $s=1$, {\em i.e.} it is
tangent to $V_2$. Otherwise, we replace $\c$ by $\c^s$ with
$s=\frac{\k r}{r_1}$ and repeat this procedure. The slope of the new
geodesic is
$$q(\c^s)=\bigg(1-\frac{\k r}{r_1}\bigg)q(\c)=q(\c)-\frac{\k
r}{r_2}\le q(\c)-{\k},$$ showing that the procedure stops after a
finite number of iterations.  Since $V_1$ and $V_2$ play symmetric
r\^oles, this finishes the proof.
\r

\begin{ecor}\label{po}
If there exists an incomplete geodesic $\c$ passing through a point
$x\in\mm$ such that $\dot\c$ is neither tangent to $V_1$ nor to $V_2$,
then $x$ belongs to a complete leaf of $V_1$ which intersects an
incomplete maximal leaf of $V_2$, and to a complete leaf of $V_2$
which intersects an incomplete maximal leaf of $V_1$.
\end{ecor}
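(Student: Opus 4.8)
The plan is to feed the given geodesic $\c$ into the construction used in the proof of Lemma \ref{exist} and simply keep track of which leaves the resulting incomplete geodesics meet. First I would parametrize $\c$ by arc-length on $(0,1]$ with $\c(1)=x$; since $\c$ is incomplete it has finite length, so $\{\c(t)\}$ is Cauchy and Theorem \ref{t1} forces $\lim_{t\to 0}\c(t)=\o$. Writing $\dot\c=X_1+X_2$ with $X_i$ the (parallel and, by hypothesis, nonzero) projection of $\dot\c$ onto $V_i$, I would run the slope-reduction procedure of Lemma \ref{exist} on the $V_1$-component: applying Proposition \ref{tam} to bound $\dt(\c(t))$ from below by a multiple of $t$ keeps the relevant exponentials defined, and after finitely many elementary steps of the form $\c'(t):=\exp_{\c(t)}(-ts X_1)$ one obtains an incomplete geodesic $\c_1:(0,1]\to\mm$ tangent to $V_2$ with $\lim_{t\to 0}\c_1(t)=\o$. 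Let $N_2$ denote the maximal leaf of $V_2$ containing $\c_1$; it is incomplete.

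The new point, and the only one requiring care, is to check that $N_2$ meets the maximal leaf $L_1$ of $V_1$ through $x$. This follows by tracking the endpoint $t=1$ through the construction: each elementary step sends the current endpoint $p$ to $\exp_p(-sX_1)$, obtained by exponentiating a vector of $(V_1)_p$; since $V_1$ is totally geodesic, this keeps the endpoint on the maximal $V_1$-leaf on which it already lies. As the initial endpoint is $\c(1)=x\in L_1$, every subsequent endpoint, and in particular $\c_1(1)$, stays on $L_1$. Hence $\c_1(1)\in N_2\cap L_1$, so the incomplete leaf $N_2$ intersects $L_1$. Proposition \ref{complete}, applied with the roles of $V_1$ and $V_2$ exchanged, then shows that $L_1$ is complete. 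This yields the first assertion: $x$ lies on the complete leaf $L_1$ of $V_1$, which meets the incomplete maximal leaf $N_2$ of $V_2$.

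Finally, I would invoke the symmetry between $V_1$ and $V_2$: running the same procedure on the $V_2$-component, i.e. starting from $\c_2'(t):=\exp_{\c(t)}(-ts X_2)$, produces an incomplete maximal leaf $N_1$ of $V_1$ meeting the maximal leaf $L_2$ of $V_2$ through $x$ (again because the endpoints stay on $L_2$), and Proposition \ref{complete} shows $L_2$ is complete. This gives the second assertion. I expect no real difficulty beyond the endpoint-tracking above, since all the analytic content---the lower bound on $\dt(\c(t))$ guaranteeing that the exponential map is defined along $\c$---has already been established in Proposition \ref{tam} and exploited in Lemma \ref{exist} and Proposition \ref{complete}.
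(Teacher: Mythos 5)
Your proposal is correct and follows essentially the same route as the paper, which simply states that the corollary "follows directly from the proof of Lemma \ref{exist} together with Proposition \ref{complete}"; you have filled in the one implicit detail (that the incomplete $V_2$-leaf produced by the slope-reduction actually meets the $V_1$-leaf through $x$, because each elementary step moves the endpoint $t=1$ by exponentiating a $V_1$-vector and the $V_1$-leaves are totally geodesic). No gaps.
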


\begin{proof}The result follows directly from the proof of Lemma \ref{exist}
together with Proposition \ref{complete}.\r

\begin{elem}\label{iso}
If $M_1$ is a maximal leaf of $V_1$ which is incomplete, then the universal coverings of all
maximal leaves of $V_2$ which intersect $M_1$ are isometric.
\end{elem}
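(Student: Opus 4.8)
The plan is to realize an isometry between two leaves $L$ and $L'$ of $V_2$ meeting $M_1$ at points $x$ and $x'$ by exponentiating, over the whole leaf $L$, a parallel section of $V_1$. Concretely, I would fix $X_0\in V_1$ at $x$ with $\exp_x X_0=x'$ (so $X_0$ is tangent to $M_1$ and exponentiates to the $M_1$-geodesic joining $x$ to $x'$), extend it to a parallel field $X$ along $L$, and set $\Phi(z):=\exp_z X_z$. The field $X$ is globally well defined on $L$ because the holonomy of $g_0$ acts trivially on $V_1$ along loops tangent to $V_2$ (part of the local de Rham splitting, Proposition IV.5.2 in \cite{kn1}), so parallel transport of $X_0$ inside $L$ is path-independent. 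Once $\Phi$ is defined on all of $L$, the index-swapped version of Lemma \ref{exp-loc} (its proof is symmetric in the two factors) shows that $\Phi$ is a local isometry onto an open subset of a leaf of $V_2$; since $\Phi(x)=x'\in L'$ and $L$ is connected, this image lies in $L'$, and the inverse slide $z\mapsto\exp_z(-X_z)$ shows that $\Phi$ is injective. As $L$ is complete by Proposition \ref{complete}, a local isometry out of $L$ into the connected leaf $L'$ is a Riemannian covering, hence onto; being also injective it is a global isometry $L\to L'$.

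The whole difficulty is therefore the \emph{global definedness} of the slide $\Phi$ on the complete leaf $L$: although $L$ is complete, it may approach the singularity $\o$ (points of arbitrarily small $\dt$ lying at infinite intrinsic distance), and a $V_1$-geodesic issued from such a point could a priori run into $\o$ before reaching length $|X_0|$. This is where I expect the main work to be, and it is exactly the phenomenon already met in Proposition \ref{complete}. I would handle it precisely as there, using the tameness estimate of Proposition \ref{tam}: along an incomplete geodesic $\c\colon(0,r]\to M_1$ with $\c(r)=x$ and $\c(t)\to\o$ (one passes through every point of $M_1$, since $M_1$ is incomplete and totally geodesic), Proposition \ref{tam} gives $\dt(\c(t))\ge\k t$, and the iteration carried out in Proposition \ref{complete} shows that the slides $\exp_{\c(t)}sY$ in the $V_2$-direction are defined for all $s\in\R$ and all $t\in(0,r]$.

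The cleanest case is when $x'=\c(t')$ lies on such a geodesic $\c$: writing every point of $L$ as $z=\exp_{x}sY$ for a unit $Y\in V_2$ at $x$ and $s\in\R$ (possible since $L$ is complete and totally geodesic, so $\exp_x$ is onto $L$), the point $\Phi(z)$ is precisely $\exp_{\c(t')}sY'$, where $Y'$ is the parallel transport of $Y$ along $\c$ from $\c(r)$ to $\c(t')$. By the construction of Proposition \ref{complete} all these exponentials are defined, and Lemma \ref{geo} (i) identifies the one-parameter family $t\mapsto\exp_{\c(t)}sY$ as a $V_1$-geodesic whose velocity is the parallel transport of $\dot\c$; hence $\Phi$ sends $V_2$-geodesics of $L$ to $V_2$-geodesics of $L'$ and is the announced global isometry. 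For arbitrary $x,x'\in M_1$ I would then connect their leaves through families of the form $\{L_{\c(t)}\}$: near $\o$ the local product scale $\s$ of Lemma \ref{polidisc} is comparable to $\dt\approx\k t$, so for small $t$ two such geodesics issued from $x$ and from $x'$ reach points that can be joined by finitely many short $V_1$-slides lying inside the product charts of Lemma \ref{polidisc} (hence automatically defined), each inducing a local isometry that extends, by the completeness argument above, to a global isometry of the complete $V_2$-leaves. Chaining these isometries, and using that being isometric is an equivalence relation on the connected leaf $M_1$, yields $L\cong L'$.

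To reiterate, the main obstacle is controlling the exponential map near $\o$: everything reduces to showing that the $V_1$-slides stay defined across a complete leaf that may accumulate at the singularity, and this is secured by the tameness inequality $\dt(\c(t))\ge\k t$ together with the iteration of Proposition \ref{complete}. The remaining steps (well-definedness of the parallel field, local-to-global isometry via completeness, and the connectedness packaging over $M_1$) are then formal.
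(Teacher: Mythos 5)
Your proposal is correct and follows essentially the same route as the paper: reduce to two leaves joined by a geodesic segment in $M_1$, use Proposition \ref{complete} to get completeness of all the intermediate $V_2$-leaves $M_2(\c(t))$, commute the two exponentials via Lemma \ref{geo} (i) so that the $V_1$-slide is automatically defined on the whole leaf, and conclude with the index-swapped Lemma \ref{exp-loc}. The only difference is in the packaging of the general case: the paper simply observes that any two points of $M_1$ are joined by a broken geodesic inside $M_1$ and applies the argument to each segment, so your excursion near the singularity with the product charts of Lemma \ref{polidisc} is unnecessary.
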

\begin{proof}
Let $M_2(x)$ denote the maximal leaf of $V_2$ through $x$. Since every
two points of $M_1$ can be joined by a broken geodesic, it is enough
to show that $M_2(\c(0))$ and $M_2(\c(1))$ are locally isometric for every
geodesic $\c:[0,1]\to M_1$. Let $y$ be any point in $M_2(\c(0))$ and $U_2$ a simply connected neighborhood of $y$ in $M_2(\c(0))$.
Consider the normal vector field $X$ on $U_2$ obtained by parallel transport of $\dot\c(0)$. Since
$M_2(\c(0))$ is complete, $y$ can be
expressed as $y=\exp_{\c(0)}(Y)$ for some $Y\in
T_{\c(0)}M_2(\c(0))$. We extend $Y$ along $\c$ by parallel
transport. By Proposition \ref{complete}, the leaves $M_2(\c(t))$ are
complete, hence $\exp_{\c(t)}(sY)$ is well-defined for every $s\in \R$
and $t\in[0,1]$.  By Lemma \ref{geo} (i) we get
$\exp_{\c(t)}(Y)=\exp_y(tX)$. The exponential of $tX$ is thus
defined for all points of $U_2$ and $t\in[0,1]$ so Lemma
\ref{exp-loc} shows that $U_2$ is isometric with its image by $\exp(X)$ in $M_2(\c(1))$. 

The very same argument actually shows that the globally defined vector field obtained by parallel transport on the universal covering of $M_2(\c(0))$ defines a map $\widetilde{M_2(\c(0))}\to M_2(\c(1))$ which is a local isometry. This map lifts to a local isometry $\widetilde{M_2(\c(0))}\to \widetilde{M_2(\c(1))}$ which is a global isometry as these two manifolds are complete and simply connected.
\r

Consider now a metric $g$ on $\mm$ obtained as the pull-back of 
a metric in the conformal class $c$ on
$M$. Let $\f$ be the conformal factor relating $g$ to $g_0$ by
$g_0=\f^2 g$ and let $\t d$ the geodesic distance induced on $\mm$ by
$g$. Denote by $B_x(r)$ and $\t B_x(r)$ the set of points at distance
less than $r$ from $x$ with respect to $d$ and $\t d$
respectively. Recall that by Lemma \ref{3.5} $\f$ is quasi-linear, so
there exist positive constants $k_1,\ k_2$ such that
\beq\label{ql}k_1\dt(x)\le \f(x)\le k_2\dt(x),\qquad \forall\
x\in\mm.\eeq

\begin{elem}\label{ball} For every $x\in\mm$ and positive real number $r$, 
the open ball $B_{x}(r)$ contains the open ball $\t B_{x}(\t r)$,
where $\t r=\frac{r}{k_2(\dt(x)+r)}$.
\end{elem}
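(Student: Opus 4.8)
The plan is to compare $g_0$-lengths and $g$-lengths of curves issuing from $x$ by means of the conformal relation $g_0=\f^2 g$ together with the quasi-linear bound \eqref{ql}. The starting observation is that $\dt$ is $1$-Lipschitz for the distance $d$: since $\dt(z)=d(z,\o)$, the triangle inequality gives $\dt(z)\le\dt(x)+d(x,z)$ for every $z\in\mm$. In particular, on the closed ball $\overline{B_x(r)}$ one has $\dt(z)\le\dt(x)+r$, so the right-hand inequality in \eqref{ql} yields $\f(z)\le k_2(\dt(x)+r)=:C$ throughout $\overline{B_x(r)}$. As the $g_0$-length of a curve is the integral of $\f$ against its $g$-speed, any curve contained in $\overline{B_x(r)}$ has $g_0$-length (written $L_{g_0}$) at most $C$ times its $g$-length (written $L_g$).

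Given $y\in\t B_x(\t r)$, I would pick a curve $\c:[0,1]\to\mm$ from $x$ to $y$ with $g$-length strictly smaller than $\t r$, and set $t^*:=\sup\{t\in[0,1]\mid d(x,\c(s))<r\text{ for all }s\in[0,t]\}$. For $s\le t^*$ continuity forces $\c(s)\in\overline{B_x(r)}$, so the bound $\f\le C$ is available along $\c|_{[0,t^*]}$ and hence
$$d(x,\c(t^*))\le L_{g_0}(\c|_{[0,t^*]})\le C\,L_g(\c|_{[0,t^*]})\le C\,L_g(\c)<C\t r=r,$$
where the final equality is exactly the definition $\t r=r/(k_2(\dt(x)+r))$. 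Thus $\c(t^*)$ actually lies in the \emph{open} ball $B_x(r)$.

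The remaining, and genuinely delicate, point will be to bootstrap this into the statement that the whole curve stays in $B_x(r)$: if $t^*<1$, then openness of $B_x(r)$ and continuity of $\c$ would produce $\e>0$ with $\c([0,t^*+\e))\subset B_x(r)$, contradicting the maximality of $t^*$. Therefore $t^*=1$ and $d(x,y)=d(x,\c(1))<r$, i.e. $y\in B_x(r)$, which proves $\t B_x(\t r)\subset B_x(r)$. The essential difficulty is precisely that the pointwise control $\f\le C$ holds only where $d(x,\cdot)\le r$; one cannot integrate the $g_0$-length over all of $\c$ from the outset, and it is the continuity argument above that guarantees $\c$ never escapes $B_x(r)$ before reaching $y$.
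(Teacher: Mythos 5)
Your argument is correct and follows essentially the same route as the paper's: both rest on the observation that $\dt\le\dt(x)+r$ on $\overline{B_x(r)}$, hence $\f\le k_2(\dt(x)+r)$ there, so that $g_0$-lengths of paths staying in $\overline{B_x(r)}$ are controlled by $k_2(\dt(x)+r)$ times their $g$-lengths, combined with a first-exit-time truncation. The paper phrases this as a contradiction (a path reaching a point outside $B_x(r)$ must have $g$-length at least $\t r$), while you run the same estimate directly to show the curve never exits; the two are equivalent.
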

\begin{proof}
For every $y\in \overline {B_{x}(r)}$ we have $\dt(y)\le \dt(x)+r$, so
by \eqref{ql} $\f(y)\le k_2(\dt(x)+r)$. Consequently, the $g^0$-length
$l^0(c)$ and $g$-length $l(c)$ of every path contained in $\overline
{B_{x}(r)}$ are related by \beq\label{path} l^0(c)\le
k_2(\dt(x)+r)l(c).\eeq Assume there exists $z\in \t B_{x}(\t
r)\sm {B_{x}(r)}$ and let $c:[0,1]\to \mm$ be any path joining
$x$ and $z$. Define $s_0=\inf\{s\ |\ c(s)\notin B_{x}(r)\}$ and
consider the path $c'=c|_{[0,s_0]}$ which is clearly contained in
$\overline {B_{x}(r)}$.  Then $l^0(c')\ge r$, so by \eqref{path},
$l(c)\ge l(c')\ge \t r$. Since this holds for every path $c$, we must
have $\t d(x,z)\ge \t r$, contradicting the fact that $z\in \t
B_{x}(\t r)$.
\r

\begin{elem}\label{disj}
Let $\c:(0,a]\to\mm$ be an incomplete $g_0$-geodesic
parametrized by arc-length,
such that $\lim_{t\to 0}\dt(\c(t))=0$. There exist positive real
numbers $\rho,\ q\in(0,1)$ such that the open balls
$B_n:=B_{\c(q^n)}(\rho q^n)$, $n\in\N$, are all pairwise disjoint.
\end{elem}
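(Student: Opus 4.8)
The plan is to reduce the pairwise disjointness of the balls $B_n=B_{\c(q^n)}(\rho q^n)$ to a single metric inequality between their centres. Two open balls $B_{\c(q^m)}(\rho q^m)$ and $B_{\c(q^n)}(\rho q^n)$ are disjoint as soon as $d(\c(q^m),\c(q^n))\ge \rho q^m+\rho q^n$, and since the roles of $m$ and $n$ are symmetric it suffices to prove
\[
d(\c(q^m),\c(q^n))\ge \rho(q^m+q^n)\qquad\text{for all } m<n,
\]
for a suitable choice of the constants $\rho,q\in(0,1)$. We may also require $q\le a$ from the outset, so that $\c(q^n)$ is defined for every $n\ge 1$; as we shall see, shrinking $q$ only makes the inequality above easier to satisfy, so this is harmless.

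The geometric heart of the argument is the comparability of $\dt(\c(t))$ with the arc-length parameter $t$. The upper bound $\dt(\c(t))\le t$ is immediate: the segment $\c|_{(0,t]}$ has $g_0$-length $t$ and, by hypothesis, connects $\c(t)$ to the singularity $\o$ in the completion $\mmm$, so $d(\c(t),\o)\le t$. For the lower bound I would invoke tameness. Reversing $\c$ on $(0,t]$ produces an incomplete half-geodesic of length $t$ issuing from $\c(t)$, whence $\mu(\c(t))\ge t$, where $\mu$ is the supremal-length function of Section \ref{3}. By Proposition \ref{tam} the function $\mu$ is quasi-linear, so there is a constant $C\ge 1$ with $\mu\le C\dt$ on all of $\mm$; therefore $\dt(\c(t))\ge t/C$. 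This two-sided estimate $t/C\le \dt(\c(t))\le t$ is the decisive consequence of the tame assumption, and it is the step I expect to carry the real content of the lemma.

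With this in hand the remaining estimate is elementary. Applying the triangle inequality in $\mmm$ with the third point $\o$ gives, for $m<n$,
\[
d(\c(q^m),\c(q^n))\ge \dt(\c(q^m))-\dt(\c(q^n))\ge \frac{q^m}{C}-q^n .
\]
It then remains to choose $\rho$ and $q$ so that $\tfrac{q^m}{C}-q^n\ge \rho(q^m+q^n)$ for every $n>m$. Since $q<1$ the right-hand side is largest when $n=m+1$, so after dividing by $q^m$ the requirement reduces to $\tfrac1C-\rho\ge q(1+\rho)$. Taking $\rho:=\tfrac{1}{2C}$ and $q:=\min\{\tfrac{1}{2C+1},a\}$ satisfies this, and both constants lie in $(0,1)$ because $C\ge 1$. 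This completes the construction; the only non-routine input is the lower bound $\dt(\c(t))\ge t/C$, which is precisely where the tame hypothesis enters.
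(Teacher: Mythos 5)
Your proof is correct and follows essentially the same route as the paper: both arguments rest on the two-sided estimate $t/C\le\dt(\c(t))\le t$ obtained from the quasi-linearity of $\mu$ (Proposition \ref{tam}), and your final condition $(\tfrac1C-\rho)q^m\ge(1+\rho)q^n$ is exactly the paper's separation condition (the paper phrases it via the values of $\dt$, or equivalently of the conformal factor $\f$, on the balls $B_n$, while you phrase it via the reverse triangle inequality applied to the centres and $\o$). The only substantive step, the lower bound $\dt(\c(t))\ge t/C$ via the life-time of the reversed geodesic, is argued just as in the paper.
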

\begin{proof}
Recall that by Proposition \ref{tam} we have control on the distance
from $\c(t)$ to the singularity $\omega$, {\em i.e.} there exists a constant $\k\in(0,1)$,
independent of $\c$, such that:
\beq\label{expl}{\k}{t}\le\dt(\c(t))\le t,\qquad \forall\ t\in(0,a].\eeq We start with arbitrary
$\rho$ and $q$ in $(0,1)$.  For every $y\in B_n$, Equation
\eqref{expl} yields
$$ ({\k}-\rho)q^n\le\dt(\c(q^n))-\rho q^n \le \dt(y)\le
\dt(\c(q^n))+\rho q^n\le (\rho+1)q^n,$$ so by \eqref{ql} we get
$$k_1({\k}-\rho)q^n\le \f(y)\le k_2(\rho+1)q^n.$$ It is thus enough to
choose $\rho$ and $q$ such that $k_2(\rho+1)q^{n+1}<k_1({\k}-\rho)q^n$
for every $n$, which is equivalent to $\rho<{\k}$ and
$q<\frac{k_1}{k_2(\rho+1)}({\k}-\rho).$
\r

\begin{ecor}\label{inter} Consider the open subset
$B:=\cup_{n\ge 1}B_n$ in $\mm$. There exists $f\in \G$ different from
the identity such that $f(B)\cap B\ne\emptyset$.
\end{ecor}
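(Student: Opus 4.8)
The plan is to rescale all the balls $B_n$ to comparable size by homotheties from $\G$, to extract by compactness two rescaled balls that overlap, and finally to read off from this overlap a nontrivial element $f\in\G$ with $f(B)\cap B\ne\emptyset$.

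First I fix a compact fundamental domain $\Omega\subset\mm$ of the action of $\G$ on $\mm$, as in Lemma \ref{l3.5}, so that every point of $\mm$ is moved into $\Omega$ by some element of $\G$. Since $\Omega$ is compact and $\o\notin\mm$, the continuous function $\dt$ has on $\Omega$ a positive minimum $a_0$ and a finite maximum $b_0$. Writing $p_n:=\c(q^n)$ for the centre of $B_n$, I choose for each $n$ an element $g_n\in\G$ with $g_n(p_n)\in\Omega$. As $\G$ acts by homotheties, $\dt(g_n(p_n))=\rho(g_n)\,\dt(p_n)$, and combining $a_0\le\dt(g_n(p_n))\le b_0$ with the estimate \eqref{expl}, namely $\k q^n\le\dt(p_n)\le q^n$, I obtain $a_0q^{-n}\le\rho(g_n)\le b_0\k^{-1}q^{-n}$. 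Since a homothety of ratio $\rho(g_n)$ sends $B_x(r)$ to $B_{g_n(x)}(\rho(g_n)r)$, the rescaled ball $g_n(B_n)=B_{g_n(p_n)}\big(\rho(g_n)\rho q^n\big)$ has its centre in the fixed compact set $\Omega$ and radius in the fixed interval $[a_0\rho,\,b_0\k^{-1}\rho]$; in particular every such radius is at least $\eta:=a_0\rho>0$.

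Next I apply a pigeonhole argument. Covering $\Omega$ by finitely many balls of radius $\eta/4$, among the infinitely many centres $\{g_n(p_n)\}_{n\ge1}\subset\Omega$ two of them, say $g_m(p_m)$ and $g_n(p_n)$ with $m\ne n$, fall into the same small ball, whence $d(g_m(p_m),g_n(p_n))<\eta/2$. As each rescaled ball has radius at least $\eta$, this puts $g_m(p_m)$ into $g_n(B_n)$ as well as into $g_m(B_m)$, so $g_m(B_m)\cap g_n(B_n)\ne\emptyset$; setting $f:=g_m^{-1}g_n\in\G$ and applying $g_m^{-1}$ gives $B_m\cap f(B_n)\ne\emptyset$, hence $B\cap f(B)\ne\emptyset$. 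The one point that must not be overlooked is that $f\ne\id$: if $f=\id$ then $g_m=g_n$, and $g_m(B_m)\cap g_m(B_n)\ne\emptyset$ would force $B_m\cap B_n\ne\emptyset$, contradicting the pairwise disjointness of the $B_n$ from Lemma \ref{disj} (recall $m\ne n$). Apart from this, the argument is soft: the only genuinely delicate step is the normalisation above, where the homothety structure of $\G$ together with the two-sided linear control \eqref{expl} on $\dt(\c(t))$ keeps all rescaled balls centred in a fixed compact set and of radius bounded below, so that a finite pigeonhole forces the overlap.
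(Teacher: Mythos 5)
Your proof is correct, and it takes a genuinely different route from the paper's. The paper uniformizes the sizes of the balls $B_n$ by switching to the auxiliary $\G$-invariant metric $g$: Lemma \ref{ball} (via the quasi-linearity of the conformal factor $\f$ from Lemma \ref{3.5}) shows each $B_n$ contains a $g$-ball of the fixed radius $\rho/(k_2(1+\rho))$, and then infinitely many pairwise disjoint projections, each containing a $g$-ball of that radius, cannot fit in the compact manifold $M$. You instead uniformize by applying the group itself: you push each centre $\c(q^n)$ into a compact fundamental domain $\Omega$ by some $g_n\in\G$, use the equivariance $\dt\circ g_n=\rho(g_n)\dt$ together with \eqref{expl} to pin $\rho(g_n)$ between $a_0q^{-n}$ and $b_0\k^{-1}q^{-n}$, so that the rescaled balls $g_n(B_n)$ have centres in $\Omega$ and $d$-radii bounded below by $a_0\rho$, and conclude by pigeonhole; the disjointness from Lemma \ref{disj} is exactly what rules out $f=\id$, a point you rightly make explicit (the paper leaves it implicit in the condition $m\ne n$). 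The two mechanisms are dual — passing to the invariant metric versus renormalizing by homotheties — and yours has the advantage of bypassing Lemma \ref{ball} and the conformal factor $\f$ entirely, working purely in the metric space $(\mm,d)$ with its homothety group; the paper's version is shorter on the page only because Lemma \ref{ball} has been set up immediately beforehand for this purpose. The one background fact you use that deserves a word is that $\dt$ is $\G$-equivariant of weight $1$ (i.e.\ the extension of each homothety to the completion fixes $\o$); this is exactly what the paper invokes in the proof of Proposition \ref{tam}, so it is available.
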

\begin{proof}
Lemma \ref{ball} applied to $x=\c(q^n)$ and $r=\rho q^n$ shows that
$B_n$ contains the open ball $\t B_x(\t r)$ where
$$\t r=\frac{r}{k_2(\dt(x)+r)}=\frac{\rho q^n}{k_2(\dt(x)+\rho
q^n)}\ge \frac{\rho q^n}{k_2(q^n+\rho
q^n)}=\frac{\rho}{k_2(1+\rho)}.$$ Recall that $\G$ acts by isometries
on $(\mm,g)$ and that $(\mm,g)/\G=(M,g)$.  If $f(B_n)\cap
B_m=\emptyset$ for every $f\in\G$ and $m\ne n$, the projections
$\pi(B_n)$ of $B_n$ onto $M$ would be pairwise disjoint sets, each of
them containing a ball of $g$-radius $\frac{\rho}{k_2(1+\rho)}$ in
$M$. This is impossible since $M$ is compact, thus proving our
assertion.
\r

The last step in the proof of Theorem \ref{main} is the following:

\begin{elem}\label{flat}
Let $M_1$ be a maximal leaf of $V_1$ which is incomplete. Then for
every $x\in M_1$, the maximal leaf $M_2(x)$ of $V_2$ through $x$ is
flat.
\end{elem}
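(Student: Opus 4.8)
The plan is to exhibit, on a single leaf of $V_2$ meeting $M_1$, a self-homothety of ratio different from $1$, and then to invoke the classical fact that a complete Riemannian manifold carrying such a homothety is flat. By Proposition \ref{complete} every leaf of $V_2$ meeting $M_1$ is complete, and by Lemma \ref{iso} all of them are mutually isometric; hence it suffices to produce the homothety on one such leaf, after which flatness propagates to $M_2(x)$ for every $x\in M_1$ by isometry. The homothety will be manufactured out of an element $f\in\G$ provided by Corollary \ref{inter}: since $f$ preserves the parallel splitting $V_1\oplus V_2$, it carries one $V_2$-leaf onto another by a homothety of ratio $\rho(f)\neq 1$ (the only isometry in $\G$ being the identity), and the whole point of Corollary \ref{inter} is to guarantee that these two leaves both meet $M_1$, hence are isometric to each other.

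In detail, since $M_1$ is incomplete and totally geodesic, I would fix $x\in M_1$ and choose, exactly as in the proof of Proposition \ref{complete}, an incomplete geodesic $\c:(0,a]\to M_1$ parametrized by arc-length, tangent to $V_1$, with $\c(a)=x$ and $\lim_{t\to0}\c(t)=\o$ in $\mmm$. I would then apply Lemma \ref{disj} and Corollary \ref{inter} to this $\c$, taking the constant $\rho$ small enough that each ball $B_n=B_{\c(q^n)}(\rho q^n)$ lies inside a de Rham product neighbourhood $U_n\cong B^1_{\c(q^n)}(\s)\times B^2_{\c(q^n)}(\s)$ as in Lemma \ref{polidisc}; this is possible because $\s$ is quasi-linear, hence bounded below by a fixed multiple of $\dt(\c(q^n))\ge\k q^n$ (see \eqref{expl}), while $B_n$ has radius $\rho q^n$. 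Corollary \ref{inter} then yields $f\in\G\setminus\{\id\}$ and indices $n,m\ge1$ together with a point $p\in f(B_n)\cap B_m$, so that $p\in B_m$ and $f^{-1}(p)\in B_n$.

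Reading $p$ in the product coordinates of $U_m$, the local $V_2$-plaque through $p$ meets the local $V_1$-plaque through $\c(q^m)$, which lies in $M_1$; consequently the maximal leaf $M_2(p)$ intersects $M_1$, and the same argument in $U_n$ shows that $M_2(f^{-1}(p))$ intersects $M_1$. Hence both leaves are complete by Proposition \ref{complete} and, together with $M_2(\c(q^n))$ and $M_2(\c(q^m))$, mutually isometric by Lemma \ref{iso}. Since $f$ preserves $V_2$ and acts by the pure homothety of ratio $\rho(f)\neq1$, it restricts to a homothety $f:M_2(f^{-1}(p))\to M_2(p)$ of ratio $\rho(f)$; composing it with an isometry $M_2(p)\to M_2(f^{-1}(p))$ furnished by Lemma \ref{iso} produces a self-homothety $h:M_2(p)\to M_2(p)$ of ratio $\rho(f)\neq1$.

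It then remains to conclude that a complete Riemannian manifold $(N,g)$ admitting a homothety $h$ of ratio $\l\neq1$ is flat. Replacing $h$ by $h^{-1}$ we may assume $\l<1$, so $h$ is a metric contraction and, $N$ being complete, has a fixed point $o$; the scaling $h^*g=\l^2g$ relates the sectional curvatures at $h^k(q)$ and at $q$ by the factor $\l^{-2k}$, so that $K_g(\sigma)_q=\l^{2k}K_g(dh^k_q\sigma)_{h^k(q)}$, and letting $k\to\infty$ (whence $h^k(q)\to o$, where the curvature stays bounded) forces $K_g(\sigma)_q=0$ at every $q$. Thus $M_2(p)$ is flat, and by Lemma \ref{iso} so is $M_2(x)$ for every $x\in M_1$. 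The delicate point is the construction in the second and third paragraphs: producing a bridging element $f$ that matches two $V_2$-leaves sitting at genuinely different ``radii'' while keeping the relevant balls inside product charts. This is exactly where the co-compactness of $\G$, equivalently the compactness of $M$, enters decisively through Corollary \ref{inter}.
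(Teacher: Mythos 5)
Your proof is correct and follows essentially the same route as the paper's: the same incomplete geodesic in $M_1$, the disjoint balls of Lemma \ref{disj} shrunk below the de Rham radius $\sigma$ of Lemma \ref{polidisc}, the bridging element $f$ from Corollary \ref{inter}, and the composition of $f$ with the isometry from Lemma \ref{iso} to get a strict self-homothety of a complete leaf. The only difference is that you prove the final classical fact (complete plus non-isometric homothety implies flat) directly via the fixed point of the contraction, where the paper simply cites Lemma 2, p.~242 of \cite{kn1}.
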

\begin{proof}
Let $\c:(0,1]\to M_1$ be an incomplete geodesic with respect to $g_0$
parametrized by arc-length, such that $\c(1)=x$
and $\c(t)$ converges to $\o$ (with respect to $d$) as $t$ tends to
$0$. By Lemma \ref{disj} one can find $\rho,\ q\in(0,1)$ such that the
open balls $B_n:=B_{\c(q^n)}(\rho q^n)$ are pairwise disjoint.
Moreover, one can choose $\rho$ such that each maximal leaf of $V_2$ through a
point of $B=\cup_{n\ge 1}B_n$ intersects $M_1$. Indeed, this follows
from Lemma \ref{polidisc} provided that $\rho q^n$ is smaller than
$\sigma(\c(q^n))$ for every $n$. Since $\sigma$ is quasi-linear, there
exists some $\sigma_0$ such that $\sigma(x)\ge\sigma_0 \dt(x)$, so
from \eqref{expl} it suffices to take $\rho<{\k}{\sigma_0}$.

Corollary \ref{inter} now shows that there exists $f\in \G$ different
from the identity and $y,z\in B$ such that $y=f(z)$. Then $f$ maps the
integral leaf $M_2(z)$ of $V_2$ through $z$ to the integral leaf  $M_2(y)$ of
$V_2$ through $y$. Since both leaves intersect $M_1$, Lemma \ref{iso}
shows that there exists a global isometry between their
universal coverings  $\widetilde{M_2(y)}$ and $\widetilde{M_2(z)}$.
Moreover, $f$ lifts to a strict homothety $\tilde f:\widetilde{M_2(z)}\to \widetilde{M_2(y)}$.
Composing $\tilde f$ with the isometry above, we
obtain a strict homothety of $\widetilde{M_2(z)}$. Since $\widetilde{M_2(z)}$ is complete,
Lemma 2, page 242 in \cite{kn1} shows that it must be flat. By Lemma
\ref{iso} again, all the other leaves tangent to $V_2$ must be flat as
well.
\r

We are now in position to complete the proof of Theorem \ref{main}.
From Corollary \ref{po}, and Lemma \ref{flat}, the sectional curvature
of $g_0$ vanishes at each point $x$ which belongs to an incomplete
geodesic which is neither tangent to $V_1$ nor to $V_2$. The proof of
Lemma \ref{geo} (ii) shows that the set of such points is dense in
$\mm$. Thus $(\mm,g_0)$ is a flat Riemannian manifold, so the holonomy
group of $D=\n^0$ is discrete.
\r

\obs The only place where the compactness assumption on $M$ is needed
in Theorem \ref{main}, is to ensure, by Theorem \ref{t1}, that the
minimal Riemannian cover of $(M,c,D)$ has exactly one
singularity. Theorem \ref{main} thus holds in a slightly more general
setting, and applies in particular to all Riemannian cones over complete
Riemannian manifolds. 
\eobs

\vs

\section{Stable Weyl connections}

In this section we introduce the notion of {\em stability} for Weyl
connections and show that a stable Weyl connection is necessarily
tame. As stability is an open condition in the $C^1$-topology, this
shows, in particular, that the class of tame closed Weyl connections
is significantly large.

\begin{defi}\label{at}
A Weyl connection $D$ on a conformal manifold $(M,c)$ is called {\em
stable} if there exists a complete Riemannian metric $g\in
c$ and a positive real number $\e>0$ such that
\beq\label{tame}|\theta|^2g(X,X)+(\n_X\theta)(X)\ge 2\e g(X,X),\qquad
\forall\ X\in TM,  \eeq
where $\n$ denotes the Levi-Civita
covariant derivative of $g$ and $\theta$ denotes the Lee form of $D$
with respect to $g$.
\end{defi}

Recall \cite{g} that the Lee form $\theta$ of a Weyl connection $D$ with respect
to a metric $g\in c$ measures the difference between $D$ and the
Levi-Civita connection $\n=\n^g$ of $g$: \beq\label{lee}
D_XY-\n_XY=\tilde\theta_X(Y):=\theta(X)Y+\theta(Y)X+\theta^\sharp
g(X,Y),\ \forall\ X,Y\in TM, \eeq where $\theta=g(\theta^\sharp,\cdot)$.

\begin{ere}\label{et}
With the notations from Example \ref{leit}, it is easy to see that the
standard Weyl connection $D_0$ on (a compact quotient of) a metric
cone is stable. Indeed, the Lee form of $D_0$ with respect to the
complete metric $g$  is $\theta_0:=ds$ and
it is parallel for $\n=\n^g$. Therefore
$|\theta_0|^2g+\n\theta=g$ thus \eqref{tame} is even an equality for $\e=1/2$. 
\end{ere}

\begin{ere}
An exact Weyl connection on a compact conformal manifold $M$ can not be
stable. Indeed, its Lee form $\theta$ with respect to any
metric is exact, $\theta=d\varphi$ so \eqref{tame} cannot hold at
points where $\varphi$ reaches its maximum on $M$.
\end{ere}
\smallskip

The remaining part of this section is devoted to the:

\begin{proof}[Proof of Theorem \ref{geod}] Let $\theta$ be the Lee form of $D$ with respect to
  $g$. If $\n$ denotes the Levi-Civita covariant derivative of $g$,
  then we get from (\ref{lee}), see also \cite{g}: 
\beq\label{we}D_X-\n_X=(\theta\wedge
  X)_*-p\theta(X)\id\eeq on $T^*M^{\otimes p}$, where $(\theta\wedge
  X)_*$ is the usual extension of the endomorphism $\theta\wedge X$ as
  a derivation (in fact, the right hand side of \eqref{we} is just
  $\tilde\theta_X$, acting as a derivation on $T^*M^{\otimes p}$). It
  is easy to check that $(\theta\wedge X)_*g=0$, so
  \eqref{we} yields \beq\label{we1}D_Xg=-2\theta(X)g.\eeq On the other
  hand, applying \eqref{we} to the Lee form $\theta$ itself yields
$$D_X\theta=\n_X\theta+ g(\theta,\theta) g(X,.)-2\theta(X)\theta,$$
thus showing that \eqref{tame} is equivalent to
\beq\label{tame2}(D_X\theta)(X)\ge 2\e g(X,X)-2 \theta(X)^2,\qquad
\forall\ X\in TM.  \eeq Let $\c(t)$ be a geodesic with respect to $D$
on $M$ and let $I=(a,b)$ denote its maximal domain of definition, with
$a,b\in\overline\R=\R\cup\{\pm\infty\}$. We introduce the ``speed'' and ``slope'' functions
$F(t):={g(\dot\c(t),\dot\c(t))}^{-\frac12}$ and $H(t):=\theta(\dot
\c(t))$, defined on $I$. Using \eqref{we1} we get
\beq\label{s1}F'(t)=\dot
\c(t).F(t)=-\frac12\big[(D_{\dot\c(t)}g)(\dot\c(t),\dot\c(t))\big]\big[
{g(\dot\c(t),\dot\c(t))}^{-\frac32}\big]=F(t)H(t),\eeq and from
\eqref{tame2}, \beq\label{s2}H'(t)=\dot \c(t).H(t)=(D_{\dot
\c(t)}\theta)(\dot \c(t))\ge2\e F(t)^{-2}-2H(t)^2.  \eeq

\begin{elem}\label{z}
If $b<\infty$ ({\em i.e.} $\c$ is incomplete toward the future) then
$\underline{\lim}_{t\to b}F(t)=0$.  Similarly, if $a>-\infty$, then
$\underline{\lim}_{t\to a}F(t)=0$.
\end{elem}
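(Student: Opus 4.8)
The plan is to argue by contradiction and to exploit the completeness of the metric $g$ (which is part of the analytically tame hypothesis in Theorem \ref{geod}), reducing the statement to the standard escape lemma for integral curves of the geodesic spray. Since $F(t)=g(\dot\c(t),\dot\c(t))^{-1/2}$ is the reciprocal of the $g$-speed of $\c$, the conclusion $\liminf_{t\to b}F(t)=0$ is equivalent to saying that $|\dot\c(t)|_g$ is unbounded as $t\to b$. I would therefore assume the opposite, namely that $\liminf_{t\to b}F(t)=\delta_0>0$, and derive a contradiction with the maximality of the domain $(a,b)$.

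Under this assumption there is $t_1<b$ with $F(t)\ge \delta_0/2$, hence $|\dot\c(t)|_g\le 2/\delta_0=:C$, on the whole final interval $(t_1,b)$. First I would observe that $\c$ then has finite $g$-length there: for $t_1<s<t<b$ one has $d_g(\c(s),\c(t))\le\int_s^t|\dot\c|_g\le C(t-s)$, so $\{\c(t)\}$ is $g$-Cauchy as $t\to b$. Because $g$ is complete, $\c(t)$ converges to some point $p\in M$, and by Hopf--Rinow the closed ball $\overline{B}_g(p,1)$ is compact. Consequently, for $t$ close to $b$ the velocity $\dot\c(t)$ lies in the set $K:=\{v\in TM\ |\ \pi(v)\in\overline{B}_g(p,1),\ |v|_g\le C\}$, which is compact, being the closed $C$-disc bundle of $TM$ restricted to the compact base $\overline{B}_g(p,1)$.

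The contradiction now comes from the escape lemma: a $D$-geodesic is an integral curve $t\mapsto\dot\c(t)$ of the geodesic spray on $TM$, and if its maximal interval has finite right endpoint $b$, this integral curve is not eventually contained in any compact subset of $TM$. This is incompatible with $\dot\c(t)$ remaining in the compact set $K$ for all $t$ in a final interval, so the assumption $\liminf_{t\to b}F(t)>0$ is untenable and $\liminf_{t\to b}F(t)=0$. The case $a>-\infty$ is identical after reversing the parameter $t\mapsto -t$, which sends $D$-geodesics to $D$-geodesics and leaves $F$ unchanged, turning the left endpoint into a finite right endpoint.

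The only delicate point is the clean invocation of the escape lemma together with Hopf--Rinow, so I expect the main obstacle to be phrasing the compactness of the velocity set correctly, i.e. making sure that a bounded $g$-speed over a convergent (hence eventually relatively compact) family of base points really confines $\dot\c(t)$ to a compact region of $TM$. I would also emphasize that, in contrast to the later estimates of this section, the differential relations \eqref{s1} and \eqref{s2} are not needed for Lemma \ref{z}: it uses only the incompleteness of the $D$-geodesic and the completeness of $g$, and it is precisely this dichotomy between speed blow-up and extendability that the escape lemma makes precise.
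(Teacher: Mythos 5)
Your proof is correct and follows essentially the same route as the paper's: negate the statement to get a bound on the $g$-speed near the finite endpoint, use completeness of $g$ (via Hopf--Rinow) to confine the velocity curve $t\mapsto\dot\c(t)$ to a compact subset of $TM$, and conclude by the escape lemma for integral curves of the geodesic spray. The only cosmetic difference is that the paper traps the base points directly in the closed ball $\{y\ |\ d(x,y)\le kT\}$ rather than first extracting a limit point $p$, but the argument is the same.
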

\begin{proof}
We have to show that the $g$-norm of the speed vector of an
incomplete $D$-geodesic cannot be bounded on $M$. Consider the
geodesic flow of $D$, viewed as a vector field on the tangent bundle
$TM$. Let $\c$ be the maximal half-geodesic with respect to $D$,
issued from some $X\in T_xM$. There exists $T>0$ such that the maximal
integral curve through $X$ of the geodesic flow is defined only for
$t< T$. Assume that the $g$-norm of $\dot\c$ is bounded:
$g(\dot\c(t),\dot\c(t))<k^2$ for all $t\in [0,T)$. Then the
corresponding integral curve is contained in the subset
$K(k,T)$ of $TM$ defined by
$$K(k,T):=\{Y_y\in TM\ |\ d(x,y)\le kT\ \hbox{and}\ g(Y,Y)\le k^2\},$$
where $d$ denotes the geodesic distance with respect to $g$. Since $g$ is complete,
the closed geodesic balls are compact, so $K(k,T)$ is a compact subset of $TM$.
On the other hand, it is well-known that an incomplete integral curve
of a vector field cannot be contained in any compact subset, thus
proving the lemma.
\r

In order to fix the ideas, we will assume from now on that
\beq\label{ass}0\in I\qquad \mbox{and}\qquad H(0)\le0\eeq (this can
always be achieved by making a translation in time and replacing
$\c(t)$ with $\c(-t)$ if necessary).

\begin{elem}\label{crit}
The function $F$ has at most one critical point. If this happens, then
the critical point is an absolute minimum of $F$, and $\c$ is
complete. Conversely, if $\c$ is complete, then $F$ has a critical
point.
\end{elem}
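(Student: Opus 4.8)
The plan is to analyze the ordinary differential inequalities \eqref{s1} and \eqref{s2} satisfied by $F$ and $H$ along a $D$-geodesic $\c$, using the analytic tameness hypothesis in the equivalent form \eqref{tame2}. The key algebraic relation to extract is an inequality governing the sign of $H'$ in terms of $F$ and $H$. First I would observe that since $F>0$ everywhere (it is the reciprocal of a positive norm), the chain rule identity \eqref{s1} shows that $F'(t)=F(t)H(t)$, so \emph{the critical points of $F$ are exactly the zeros of $H$}. Thus the statement about critical points of $F$ translates entirely into a statement about the sign changes of $H$, and the whole proof reduces to analyzing the scalar function $H$.

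Next I would examine what \eqref{s2} says at a zero of $H$. If $H(t_0)=0$, then \eqref{s2} gives $H'(t_0)\ge 2\e F(t_0)^{-2}>0$, so $H$ is strictly increasing through every one of its zeros. A standard one-variable argument then shows that $H$ can have \emph{at most one} zero: once $H$ becomes positive (which it must, just after crossing zero, since $H'>0$ there), it can never return to $0$, because returning would require $H$ to approach $0$ from above, i.e. with $H'\le 0$ at the zero, contradicting $H'(t_0)>0$. Under the normalization \eqref{ass} that $H(0)\le 0$, this unique zero, if it exists, is the place where $F$ switches from decreasing to increasing, hence is an absolute minimum of $F$. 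This handles the first two sentences of the claim.

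For the equivalence with completeness, I would combine this monotonicity picture with Lemma \ref{z}. If $H$ has no zero at all, then by \eqref{ass} we have $H\le 0$ throughout, so by \eqref{s1} $F$ is non-increasing; I would then argue that $F$ stays bounded away from $0$ in the backward direction but that forward incompleteness is impossible, tracking the two cases $H\equiv 0$ versus $H<0$ somewhere. The cleanest route is the contrapositive: if $\c$ is \emph{incomplete} toward some end, then Lemma \ref{z} forces $\underline{\lim}F=0$ at that end; but if $F$ had no critical point it would be monotone, and I would show that monotone behaviour of $F$ compatible with the sign constraints from \eqref{s2} prevents $F$ from tending to $0$ at both relevant ends simultaneously, yielding the needed dichotomy. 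Conversely, if $\c$ is complete, then $I=\R$ and I would use \eqref{s2} to show $H$ must vanish somewhere: assuming $H<0$ on all of $\R$ (forbidden, since $H'>0$ wherever $H=0$ and $H(0)\le 0$ allow a crossing) leads via the differential inequality $H'\ge 2\e F^{-2}-2H^2$ to a forced sign change.

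The main obstacle I anticipate is the converse direction and the precise extraction of the completeness conclusion from the inequality \eqref{s2}, rather than an equation: because \eqref{s2} only bounds $H'$ from below, I cannot directly integrate it, and I must argue qualitatively that $H$ cannot remain non-positive forever without producing a contradiction with the lower bound $2\e F^{-2}$ (which is itself controlled through \eqref{s1}). Coupling the two inequalities correctly, and ruling out the degenerate possibility that $F\to 0$ while $H$ stays controlled, is the delicate point; this is presumably where the completeness of the background metric $g$ (via Lemma \ref{z}) enters decisively, since it is exactly Lemma \ref{z} that links the analytic behaviour of $F$ at a finite endpoint to geometric incompleteness of $\c$.
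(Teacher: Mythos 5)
Your reduction of the statement to a sign analysis of $H$ (via $F'=FH$ with $F>0$), the uniqueness of the zero of $H$ from $H'>0$ at any zero of $H$, and the identification of that zero as the absolute minimum of $F$ under the normalization \eqref{ass} all coincide with the paper's argument. The problem is the converse direction, which is the substantive half of the lemma and which you do not actually prove: you assert that assuming $H<0$ on all of $\R$ ``leads via the differential inequality to a forced sign change,'' and then in your last paragraph concede that coupling \eqref{s1} and \eqref{s2} is exactly the delicate point you have not resolved. The lower bound on $H'$ alone does not force a sign change, since the negative term $-2H^2$ could a priori dominate the forcing term $2\e F^{-2}$ as long as $|H|$ stays large. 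The paper's missing step is quantitative: if $H<0$ on $\R$ then $F$ is positive and decreasing, so $F'=FH\to 0$, i.e. $F^2H^2\to0$; dividing \eqref{s2} by $H^2$ gives $-(1/H)'=H'/H^2>2\e/(F^2H^2)-2\to+\infty$, and integrating forces $H\to 0^-$; since moreover $F\le F(0)$ on $[0,\infty)$, the right-hand side of \eqref{s2} is eventually bounded below by a positive constant $\delta$, so $H'>\delta$ for large $t$ and $H$ must become positive, a contradiction. Some such two-stage argument (first $H\to0$, then a uniform positive lower bound on $H'$) is needed; its absence is a genuine gap.

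A secondary but real error: in your paragraph on ``the equivalence with completeness'' you claim that when $H$ has no zero, ``forward incompleteness is impossible.'' This is false --- incomplete geodesics with $H<0$ throughout are precisely the objects the rest of the proof of Theorem \ref{geod} (Lemma \ref{est} and the final length estimate) is designed to control, and they already exist on a metric cone. What the forward implication actually requires is the direct argument: if $F$ has a critical point $t_0$, then $F\ge F(t_0)>0$ on all of $I$, so $\underline{\lim}\,F$ cannot vanish at a finite endpoint, and Lemma \ref{z} excludes both $a>-\infty$ and $b<\infty$. Your stated target, preventing ``$F$ from tending to $0$ at both relevant ends simultaneously,'' is the wrong one: one must exclude $F\to0$ at either single finite end.
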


\begin{proof}
Let $t_0$ be a critical point of $F$. From \eqref{s1},
$H(t_0)=0$. Moreover, \eqref{s2} shows that $H'$ is strictly positive
at each point where $H$ vanishes, so actually $H$ cannot vanish more
than once. Thus $H(t)$ is positive for $t\ge t_0$ and negative for
$t\le t_0$, so $t_0$ is a global minimum of $F$. Lemma \ref{z} then
shows that $\c$ cannot be incomplete.

Conversely, assume that $\c$ is complete, {\em i.e.} $I=\R$. If $F$ has no
critical point, $H$ does not vanish, so by our assumption \eqref{ass},
$H<0$ on $\R$. $F$ is thus a decreasing positive function, so
$\lim_{t\to\infty}H(t)F(t)=\lim_{t\to\infty}F'(t)=0$. Dividing by
$H(t)^2$ in \eqref{s2} yields
$$\frac{H'(t)}{H(t)^2}>\frac{2\e}{F(t)^2H(t)^2}-2.$$ Since the right
hand side tends to infinity as $t\to\infty$, an integration shows that
$\lim_{t\to\infty}H(t)=0$.  Using \eqref{s2} again, we then see that
there exist some $t_0\in\R$ and $\delta>0$ such that $H'(t)>\delta$
for $t>t_0$. This of course contradicts the fact that $H$ is negative
on the whole real line $\R$, thus proving the lemma.
\r

The convention \eqref{ass} together with Lemma \ref{crit} ensures that
if $\c$ is an incomplete geodesic, $H$ is negative on $I$, so $F$ is
decreasing. By Lemma \ref{z}, $\c$ is complete toward $-\infty$,
{\em i.e.} $I=(-\infty,b)$, with $b\in \R_+$.

\begin{elem}\label{est}
If $\c:I\to\mm$ is a geodesic with respect to $D$ which is incomplete in the positive
direction, then \beq\label{eh}
H(t)\le-\frac{\sqrt\e}{F(t)},\qquad\forall\ t\in I.  \eeq
\end{elem}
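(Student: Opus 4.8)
The plan is to reformulate the claim as a statement about the single function $F$ and then run a barrier argument near the incomplete endpoint. Since $F>0$, the inequality $H\le-\sqrt\e/F$ is equivalent to $FH\le-\sqrt\e$, and because $FH=F'$ by \eqref{s1}, the whole lemma amounts to showing $F'(t)\le-\sqrt\e$ for all $t\in I$. First I would rewrite \eqref{s2} purely in terms of $F$: substituting $H=F'/F$ and $H'=F''/F-(F'/F)^2$ into \eqref{s2} and multiplying through by $F>0$ gives
\beq F''\ge\frac{2\e-(F')^2}{F}.\eeq
Recall also from the discussion preceding Lemma \ref{crit} that for an incomplete $\c$ one has $H<0$ on $I$, hence $F'=FH<0$ and $F$ is strictly decreasing; moreover $I=(-\infty,b)$ with $b\in\R_+$, and Lemma \ref{z} combined with monotonicity forces $\lim_{t\to b}F(t)=0$ (a decreasing positive function whose lower limit is $0$ actually converges to $0$).

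Next I would argue by contradiction, assuming $F'(t_0)>-\sqrt\e$ for some $t_0\in I$. The first step is a barrier observation: at any instant where $F'=-\sqrt\e$ the right-hand side of the displayed inequality equals $\e/F>0$, so $F''>0$ there. Consequently $F'$ cannot pass through the level $-\sqrt\e$ in the decreasing direction (at a first such crossing $t_3$ one would have $F'(t_3)=-\sqrt\e$ yet $F''(t_3)\le0$, contradicting $F''(t_3)>0$), and one concludes $F'>-\sqrt\e$ on the whole interval $[t_0,b)$. On this interval $(F')^2<\e$, so the inequality improves to $F''\ge\e/F$, while simultaneously $|F'|<\sqrt\e$.

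The heart of the proof is then to exploit that $1/F$ is non-integrable at $b$. Writing $F(t)=\int_t^b(-F'(s))\,ds$ (legitimate since $F(b)=0$ and $|F'|$ is bounded on $[t_0,b)$) and using $-F'<\sqrt\e$ yields $F(t)\le\sqrt\e\,(b-t)$, hence $1/F(t)\ge 1/(\sqrt\e(b-t))$. Integrating $F''\ge\e/F$ from a point $t_2$ close to $b$ then gives $F'(t)-F'(t_2)\ge\e\int_{t_2}^t ds/F(s)$, and the right-hand side diverges to $+\infty$ as $t\to b$ because of the logarithmic divergence of $\int ds/(b-s)$. This forces $F'(t)\to+\infty$, contradicting $F'<0$. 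Therefore $F'\le-\sqrt\e$ throughout $I$, which is the assertion \eqref{eh}.

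I expect the main obstacle to be precisely this last divergence step, namely verifying that $1/F$ is genuinely non-integrable near the incomplete endpoint $b$: since $b$ is finite, a priori $F$ could conceivably vanish too slowly for the integral to blow up. The linear bound $F(t)\le\sqrt\e(b-t)$, available exactly because we are in the regime $|F'|<\sqrt\e$, is what resolves this, and it is also the reason the clean argument yields the constant $\sqrt\e$ rather than the sharp value $\sqrt{2\e}$ — at the level $F'=-\sqrt{2\e}$ the barrier term $2\e-(F')^2$ degenerates to $0$ and the push $F''>0$ is lost. The weaker bound $\sqrt\e$ nonetheless suffices for the later applications.
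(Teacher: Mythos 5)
Your proof is correct, and it reaches the conclusion by a route that differs from the paper's in its key mechanisms, even though both arguments share the same skeleton (argue by contradiction, show that the set where \eqref{eh} fails propagates all the way to the incomplete endpoint $b$, then derive a contradiction there). The paper stays with the pair $(F,H)$: on the connected component $(a',b')$ of the failure set $\{t\ |\ H(t)>-\sqrt\e/F(t)\}$ containing $t_0$, inequality \eqref{s2} makes $H$ strictly increasing while $F$ is decreasing, so the component cannot close up at a finite $b'<b$; and once $b'=b$, the boundedness of $H$ on $(t_0,b)$ gives $\int_{t_0}^{b}H(t)\,dt>-\infty$, hence $\log F$ stays bounded below, contradicting Lemma \ref{z}. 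You instead eliminate $H$ entirely, recast \eqref{s2} as the second-order inequality $F''\ge(2\e-(F')^2)/F$, and run a first-crossing barrier on $F'$ at the level $-\sqrt\e$; your propagation step is essentially the same fact in disguise (since $F'=FH$, the level $F'=-\sqrt\e$ is exactly $H=-\sqrt\e/F$), but your endgame is genuinely different and, as you correctly anticipate, it has to be: the bound $|F'|<\sqrt\e$ alone does not prevent $F\to0$ in finite time, so you cannot reuse the paper's $\log F$ argument, and you instead invoke $F''\ge\e/F$ a second time together with $F(t)\le\sqrt\e\,(b-t)$ to force $F'\to+\infty$, contradicting $F'<0$. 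Both endgames are sound; the paper's is shorter because the monotonicity of $H$ on the failure set hands it the lower bound $H\ge H(t_0)$ for free, whereas yours is self-contained at the level of $F$ and makes visible why the constant $\sqrt\e$ (rather than the degenerate level $\sqrt{2\e}$) appears. The only nitpick is attributional: the facts $H<0$ on $I$ and $I=(-\infty,b)$ that you use are established in the paragraph following Lemma \ref{crit}, not preceding it.
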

\begin{proof}
If \eqref{eh} does not hold, there exists $t_0\in I$ such that
$$-\frac{\sqrt\e}{F(t_0)}<H(t_0)<0.$$ We define the open set
$$I':=\{t\in I\ |\ -\frac{\sqrt\e}{F(t)}<H(t)\},$$ and let $(a',b')$
be the connected component of $I'$ containing $t_0$. By \eqref{s2},
$H$ is strictly increasing on $I'$. In the other hand, we have seen
that $F$ is decreasing on $I$. If $b'\in I$, we would have
$$H(b')=-\frac{\sqrt\e}{F(b')}<-\frac{\sqrt\e}{F(t_0)}<H(t_0),$$ a
contradiction. The only possibility left is thus $b'=b$. But this is
impossible as well, since by \eqref{s1},
$$\lim_{t\to
b'}\log(F(t))=\log(F(t_0))+\int_{t_0}^{b'}H(t)dt>-\infty,$$
contradicting Lemma \ref{z}.
\r

Back to the proof of Theorem \ref{geod}, using \eqref{eh} and \eqref{s1} 
we get $F'(t)\le -\sqrt{\e}$, and thus
$$\sqrt{\e}b= \int_0^b \sqrt{\e}dt\le -\int_0^b F'(t)dt\le
F(0)=g(\dot\c(0),\dot\c(0))^{-\frac12}.$$ In other words, the
life-time of every geodesic $\c$, incomplete in the positive
direction, is bounded from above by $(\e
g(\dot\c(0),\dot\c(0)))^{-\frac12}.$ Let $K$ be any compact subset of
$TM\sm\{0\}$ and let $l(K)$ denote
$$l(K):=\inf_{X\in K}\{g(X,X)\}.$$ With the notations from Section 3,
for every $X\in \I ^D\cap K$ we have $\ll ^D(X)\le (\e
l(K))^{-\frac12}$, so $D$ is tame by Proposition \ref{tam}.
\r
The stability condition \eqref{tame} is clearly open in
the $C^1$ topology defined by the metric $g$ on the space of Weyl
connections. Therefore, Theorem \ref{main} applies to open subsets 
of the space of closed Weyl connections. 

\begin{ex}\label{gc}
Let $h_t$ be a $T$-periodic 1-parameter family of metrics on a compact manifold $N$ and let $g:=dt^2+h_t$ be the generalized cylinder metric 
defined on $M:=\R\times N$, with Levi-Civita connection $\n$. It is straightforward to check that 
\beq\label{gcm}\n_{\frac{\partial}{\partial t}}\tfrac{\partial}{\partial t}=0,\qquad \left[\tfrac{\partial}{\partial t},Y\right]=0,\qquad g(\n_Y\tfrac{\partial}{\partial t},Y)=g(\n_{\frac{\partial}{\partial t}}Y,Y)=\tfrac12 \dot h_t(Y,Y),\qquad \forall Y\in TN.
\eeq

By compactness, there exists some positive real number $s_0$ such that $s_0h_t+\tfrac12\dot h_t$ is positive definite for all $t$.
Then for every $s>s_0$, the Weyl connection $D^s$ whose Lee form with respect to $g$ is $\theta^s:=s\,dt$,
is stable. Indeed, for any $X\in TM$ written as $X=a\tfrac{\partial}{\partial t}+Y$ with $Y\in TN$, we can express the left hand term of Inequality \eqref{tame} using \eqref{gcm} as
$$|\theta|^2g(X,X)+(\n_X\theta)(X)=s^2(a^2+h_t(Y,Y))+\tfrac12 s\dot h_t(Y,Y),$$
therefore \eqref{tame} is satisfied for $2\e:=\mathrm{min}\{s_0^2,s^2-ss_0\}$. If $\Gamma$ denotes the group generated by the $g$-isometry $(t,x)\mapsto (t+T,x)$, then $D^s$ defines a closed, non-exact, stable Weyl connection on the compact manifold $M/\Gamma$. Moreover, for $\dot h_t$ large with respect to $h_t$, this connection is not $C^1$-close to a quotient of a cone (which corresponds to the case $\dot h_t\equiv 0$).
\end{ex}
\vs

\section{Examples and applications}

\subsection{Holonomy issues}
An exact Weyl connection on a conformal manifold is just the
Levi-Civita connection of some metric in the conformal class.  The
possible restricted holonomy groups of exact Weyl connections are thus
given by the Berger-Simons theorem (\cite{besse}, p. 300). The
analogous question for non-closed Weyl connections can be answered from
\cite{schwach} in the irreducible case and was studied in \cite{af} in
the reducible case. It thus remains to understand the case of closed,
non-exact Weyl connections. The next result -- which gives a complete list in
the compact case, under the assumption that the connection is tame -- is a direct consequence of Theorem \ref{main} and well known facts.

\begin{ath}\label{holo}
The restricted holonomy group of a closed, non-exact, tame Weyl connection
$D$ on a compact $n$-dimensional conformal manifold $(M,c)$ is one of
the following:
$$\SO(n),\ \UU(n/2),\ \SU(n/2),\ \Sp(n/4),\ \mathrm{G}_2\ \mbox{(for $n=7$)},\
\Spin(7)\ \mbox{(for $n=8$)},\ 0. $$ Conversely, each of the groups
listed above can be realized as the restricted holonomy of a closed,
non-exact Weyl connection on a compact conformal manifold.
\end{ath}

\begin{proof} Since $D$ is locally the Levi-Civita connection of
  metrics in the conformal class $c$, the Berger-Simons theorem
  applies. Assume first that $D$ is locally symmetric. The metric
  $g_0$ on the minimal Riemannian cover $\mm$ of $(M,c,D)$ is then
  locally symmetric. Every nontrivial homothety $f$ satisfies
  $f^*g_0=\rho(f)^2 g_0$ and preserves the Riemannian curvature tensor
  $R_0$. In particular $f^*(|R_0|^2)=\rho(f)^{-4} |R_0|^2$. On the
  other hand, $R_0$ being parallel with respect to the Levi-Civita 
  connection of $g_0$, $|R_0|^2$ is constant on
  $\mm$. Since $\rho(f)\ne 1$, this shows that $(\mm,g_0)$ is flat, so
  $\Hol_0(D)=0$.

Assuming from now on that $\Hol_0(D)\ne 0$, $D$ is irreducible by
Theorem \ref{main}, so $\Hol_0(D)$ is in the Berger list \cite{besse},
p. 301. It remains to show that $\Sp(k)\cdot \Sp(1)$ can not be
realized as the restricted holonomy group of a closed, non-exact Weyl
connection. The argument is similar to the one used above. If
$\Hol_0(D)=\Sp(k)\cdot \Sp(1)$ then the minimal cover $(\mm,g_0)$ is
{\em quaternion-K\"ahler}, therefore Einstein with non-zero Ricci
tensor $\Ric=\l g_0$ \cite{besse}.  Since the homotheties that act on $\mm$
preserve the Levi-Civita connection of $g_0$, they also preserve the Ricci
tensor. We infer that every homothety has to be an isometry, which
contradicts the fact that $D$ is not exact.

Conversely, every group in the above list can be
realized as the holonomy of a closed, non-exact Weyl connection on a
compact manifold $M=S^1\times N$, obtained as 
quotient of the Riemannian cone over a manifold $(N,g)$ endowed with special structure by a non-trivial homothety
(see \cite{baer93a} for details).

\r

Note that the case $\Hol_0(M,D)=\UU(m)$ is well-known in the
literature and corresponds to {\em locally conformally K\"ahler}
(l.c.K.) manifolds.  The l.c.K. structure constructed above on
$S^1\times N$ for every Sasakian manifold $N$ has the following
special property: There exists a metric $g$ in the conformal class
such that the Lee form of $D$ with respect to $g$ is
$\nabla^g$-parallel \cite{v}.  This special kind of l.c.K. metric is
called {\em Vaisman} metric and it is known that not every
l.c.K. structure contains such a metric in the conformal class, not even 
for a deformation of the l.c.K. conformal class (see
\cite{lebrun}, \cite{tric} for examples of l.c.K. manifolds which can
not be conformally Vaisman for topological reasons, having non-zero 
Euler characteristic, and also \cite{f}
for a classification of Vaisman structures on compact 4-manifolds).

For the other holonomy groups in the above list we have the following structure result 
(note that the tameness assumption is no longer required):
\begin{ath}\label{struct}
Let $(M,c,D)$ be a compact Weyl manifold of dimension $n>2$, such that $D$ is a closed non-exact Weyl
connection whose restricted holonomy is one of the following subgroups of $\SO(n)$:
$\SU(n/2),\ \Sp(n/4), \mathrm{G}_2\subset \SO(7)$, $\Spin(7)\subset \SO(8)$ or
$0\subset \SO(n)$. Then the following hold:
\bi
\item The minimal Riemannian cover of $(M,c,D)$ is a Riemannian cone.
\item The manifold $M$, endowed with its Gauduchon metric, is a mapping torus.
\ei
\end{ath}
\begin{proof}
Let $g\in c$ denote the {\em Gauduchon metric} of $D$ on $M$ (which is determined
up to a multiplicative constant by the fact that the Lee form of $D$ with
respect to $g$ is $\delta^g$--co-closed, see \cite{g1}), as well as its
pull-back to the  
universal cover $\tilde M$ of $M$. We denote by $g_0$ the metric on $\tilde M$ having $D$ as Levi-Civita covariant derivative.
In all five cases (2)--(6), the metric $g_0$ is Ricci-flat, so $D$ is an {\em
Einstein-Weyl} connection. This also holds on the compact manifold $M$, therefore
Theorem 3 in \cite{g} implies that the Lee form of $D$ with respect to $g$ is
parallel (and non-zero). The same is true on the complete, simply connected manifold $(\tilde
M,g)$, which is therefore isometric to a Riemannian product $(\R,ds^2)\times (N,g_N)$.
The Lee form of $D$ with respect to $g$ on $\tilde M$ is just $ds$, so
$g_0=e^{2s}g$, i.e. $g_0=dt^2+t^2g_N$ after a coordinate change $t:=e^s$. This
means that $(\tilde M,g_0)$ is the Riemannian cone over $(N,g_N)$. It is well-known,
see for example \cite{baer93a}, that if the holonomy of the Riemannian cone of
$(N,g_N)$ is one of the five groups above, then $(N,g_N)$ is Einstein with
positive scalar curvature. This, together with the fact that $N$ is closed in
$\tilde M$, (and thus complete), implies that $N$ has to be compact.

Let $f\in\pi_1(M)$ be any deck transformation, thus acting isometrically on
$(\tilde M,g)$. Since $f$ is affine with respect to $D$, it has to preserve the
Lee form of $D$ with respect to $g$, {\em i.e.} $f^*(ds)=ds$, and therefore it
preserves its $g$-dual $\d/\d s$. This means that $f$ commutes with the flow of
$\d/\d s$ on $\tilde M$, so it 
is induced by an isometry, also denoted by $f$, of $(N,g_N)$:
$f(s,x)=(s+\ln(\rho(f)),f(x))$ (recall that $\rho(f)$ is the homothety constant
of $f$ with respect to $g_0$: $f^*g_0=\rho(f)^2g_0$). It follows that the group
$\I\subset\pi_1(M)$ of deck transformations preserving $g_0$ induces a group of
isometries $\I_N$ acting freely on 
$(N,g_N)$, so the minimal Riemannian cover $(\mm,g_0)$ of $(M,c,D)$ is the
Riemannian cone over $(N,g_N)/\I_N$. 

Finally, the compactness of $N$ implies that the deck transformation group 
$\G=\pi_1(M)/\I$ of the covering $\mm\to M$ is discrete, hence isomorphic to
$\Z$, showing that $(M,g)$ is the mapping torus of an isometry of
$(N,g_N)/\I_N$.
\end{proof}

As a consequence, $\chi(M)=0$ and the fundamental group of $M$ is a finite
extension of $\Z$. Note that if $\dim M=2$ and $D$ is flat, 
its minimal covering may be $\C^*$ or $\C$. In both cases $\pi_1(M)$ is (a
finite extension of) $\Z^2$.

\obs The Berger-Simons theorem, along with the de Rham decomposition
theorem, completely classify the restricted holonomy groups of torsion-free
connections with {\em bounded} full holonomy group (as a subset of GL$(n,\R)\subset \R^{n^2}$). On the other hand,
a closed, non-exact Weyl connection is just a torsion-free connection
whose restricted holonomy group is {\em compact}, but its full holonomy
group is {\em not bounded}.  Theorem \ref{main} and the results in this
section can thus be interpreted as an holonomy classification for this
kind of connections (under the tameness assumption).  \eobs

\vs

\subsection{An example of cone-like manifold which is not tame}\label{ntame}

Let $\widehat{C_0}$ be the following rotation cone in $\R^3$:
$$\widehat{C_0}:=\{(x,y,z)\ |\ z=\sqrt{x^2+y^2}\}.$$ The set
$C_0:=\widehat{C_0}\sm\{0\}$ is a smooth Riemannian submanifold of
$\R^3$ and its metric completion is $\widehat{C_0}$. The homothety
$X\mapsto 2X$ in $\R^3$ defines by restriction a homothety $f$ of
$C_0$, which generates a group of homotheties $\G:=\{f^n\ | \
n\in\Z\}$ acting freely and properly discontinuously on $C_0$. The
quotient space is a topological torus $T^2$. The Riemannian metric on
$C_0$ defines by projection a conformal structure on $T^2$, and its
Levi-Civita connection projects to a closed, non-exact Weyl connection
on $T^2$.

We are going to apply some surgery and smoothening to get by similar
methods a closed, non-exact Weyl connection on a surface of genus 2.

To do that, consider the domain $B_0:=C_0\cap\{1<z<2\}$, remove the
two topological discs obtained as intersection of $B_0$ with the full
cylinder
$$Z_0:=\{(x,y,z)\in\R^3\ |\ y^2+(z-3/2)^2\le 1/16\},$$ connect the
borders of the two removed discs by the part of the boundary of $Z_0$
that lies {\em inside} the cone $C_0$, then smoothen it up to get a new
surface $B\subset \R^3$ such that: \bi
\item Only the part of $B_0$ inside the (larger) cylinder
$$Z:=\{(x,y,z)\in\R^3\ |\ y^2+(z-3/2)^2\le 1/8\}$$ has been changed
(in particular there are neighborhoods of the two boundary circles of
$B_0$ that are unchanged, so the gluing with the remaining part of
$C_0$ can be done smoothly);
\item The symmetries
$$S^x,S^y:\R^3\ra \R^3,\ S^x(x,y,z):=(-x,y,z),\ S^y(x,y,z):=(x,-y,z)$$
still act as isometries of $B$.  \ei The union
$$N_0:=\bigcup_{n\in\Z}f^n\left(\overline{B}\right)$$ is then a
non-closed (hence incomplete) smooth submanifold in $\R^3$ which can
be completed as a metric space by adding the origin to it. Let $g_0$
denote the induced Riemannian metric from $\R^3$. The group $\G$ acts
on $(N_0,g_0)$ by homotheties and the quotient space $N:=N_0/\G$ is a
genus 2 surface (obtained by gluing together the two circles that
constitute the boundary of $B$). The Riemannian metric $g_0$ and its
Levi-Civita connection define, by projection, a conformal structure
$c$, and a closed, non-exact Weyl connection $D$ on $N$.

\begin{center}\begin{picture}(0,0)%
\includegraphics{f5.pstex}%
\end{picture}%
\setlength{\unitlength}{3947sp}%
\begingroup\makeatletter\ifx\SetFigFont\undefined%
\gdef\SetFigFont#1#2#3#4#5{%
  \reset@font\fontsize{#1}{#2pt}%
  \fontfamily{#3}\fontseries{#4}\fontshape{#5}%
  \selectfont}%
\fi\endgroup%
\begin{picture}(7194,3057)(6466,-7867)
\put(8507,-7504){\makebox(0,0)[lb]{\smash{\SetFigFont{12}{14.4}{\rmdefault}{\mddefault}{\updefault}$Z$}}}
\put(7930,-6440){\makebox(0,0)[lb]{\smash{\SetFigFont{12}{14.4}{\rmdefault}{\mddefault}{\updefault}$B_0$}}}
\put(10311,-5491){\makebox(0,0)[lb]{\smash{\SetFigFont{12}{14.4}{\rmdefault}{\mddefault}{\updefault}$B$}}}
\put(12438,-5619){\makebox(0,0)[lb]{\smash{\SetFigFont{12}{14.4}{\rmdefault}{\mddefault}{\updefault}$k_n$}}}
\put(12256,-6448){\makebox(0,0)[lb]{\smash{\SetFigFont{12}{14.4}{\rmdefault}{\mddefault}{\updefault}$\c_n$}}}
\put(13226,-5160){\makebox(0,0)[lb]{\smash{\SetFigFont{12}{14.4}{\rmdefault}{\mddefault}{\updefault}$c^-$}}}
\put(11671,-5522){\makebox(0,0)[lb]{\smash{\SetFigFont{12}{14.4}{\rmdefault}{\mddefault}{\updefault}$c^+$}}}
\put(6814,-5542){\makebox(0,0)[lb]{\smash{\SetFigFont{12}{14.4}{\rmdefault}{\mddefault}{\updefault}construction}}}
\put(6814,-5767){\makebox(0,0)[lb]{\smash{\SetFigFont{12}{14.4}{\rmdefault}{\mddefault}{\updefault}of $N_0$}}}
\put(11407,-7689){\makebox(0,0)[lb]{\smash{\SetFigFont{10}{12.0}{\rmdefault}{\mddefault}{\updefault}{\color[rgb]{0,0,0}$\{x=0\}$}%
}}}
\put(11823,-5058){\makebox(0,0)[lb]{\smash{\SetFigFont{12}{14.4}{\rmdefault}{\mddefault}{\updefault}{\color[rgb]{0,0,0}$\c_{n-1}$}%
}}}
\put(12907,-7674){\makebox(0,0)[lb]{\smash{\SetFigFont{10}{12.0}{\rmdefault}{\mddefault}{\updefault}{\color[rgb]{0,0,1}$\{y=0\}$}%
}}}
\put(10476,-7403){\makebox(0,0)[lb]{\smash{\SetFigFont{12}{14.4}{\rmdefault}{\mddefault}{\updefault}{\color[rgb]{0,0,0}geodesics}%
}}}
\put(10126,-7411){\makebox(0,0)[lb]{\smash{\SetFigFont{12}{14.4}{\rmdefault}{\mddefault}{\updefault}{\color[rgb]{0,0,0}and}%
}}}
\put(10201,-7186){\makebox(0,0)[lb]{\smash{\SetFigFont{12}{14.4}{\rmdefault}{\mddefault}{\updefault}{\color[rgb]{0,0,0}symmetries}%
}}}
\put(10351,-7636){\makebox(0,0)[lb]{\smash{\SetFigFont{12}{14.4}{\rmdefault}{\mddefault}{\updefault}{\color[rgb]{0,0,0}on $N_0$}%
}}}
\end{picture}
\vs
{\sc Figure 4.} Construction and 
properties of $N_0$.\end{center}

We are going to study the geodesics of $N_0$ and prove the following
\begin{prop}\label{cong}
The Weyl connection $D$ is not tame on $(N,c)$.
\end{prop}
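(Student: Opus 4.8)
The plan is to contradict the characterization of tameness in Proposition \ref{teim}: I will exhibit a compact subset $K$ of the unit tangent bundle $S(TN_0)$ and a sequence of \emph{incomplete} geodesics whose initial unit vectors lie in $K$ but whose life-times tend to $+\infty$, so that $\ll^{g_0}$ is unbounded on $\I^{g_0}\cap K$ and $D$ cannot be tame (equivalently, one could show via Proposition \ref{tam} that $\mu$ fails to be quasi-linear). First I would record the two structural ingredients coming from the cylindrical \emph{bridge} glued into the cone. The reflections $S^x,S^y$ are isometries of $(N_0,g_0)$, so each connected component of their fixed-point sets is a totally geodesic curve. In particular the circle $k_0:=\{(0,\tfrac14\cos\f,\tfrac32+\tfrac14\sin\f)\mid \f\in[0,2\pi)\}$, which is a component of $\mathrm{Fix}(S^x)$ lying entirely inside the cone on the flat cylindrical core of the bridge, is a closed (hence complete) geodesic, while $N_0\cap\{y=0\}=\mathrm{Fix}(S^y)$ is a totally geodesic curve running through the bridge. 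By $\G$-equivariance, $k_n:=f^n(k_0)$ is a closed geodesic at every scale $2^n$.

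Next I would analyse geodesics that \emph{wind} around the bridge near $k_0$. The cylindrical core of the bridge is flat and has a fixed axial length $L>0$; a geodesic meeting its cross-sectional circles at a small angle $\theta$ is, on this flat piece, a helix of pitch $\propto\tan\theta$, so it remains on the bridge for arc-length of order $L/\sin\theta$, winding $\Theta(1/\sin\theta)$ times, before reaching one of the two ends where the cylinder is smoothly glued to the cone. Hence, for any family of such geodesics with $\theta\to0$, the length accumulated while winding near $k_0$ is unbounded, whereas the initial vectors stay in an arbitrarily small, in particular relatively compact, neighbourhood of the unit tangent to $k_0$. Fixing such a neighbourhood once and for all provides $K$.

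The remaining and genuinely delicate point is to show that \emph{infinitely many} of these winding geodesics are incomplete, i.e. reach the singularity $\o$ of $\mmm$ (whose existence and uniqueness is Theorem \ref{t1}); this is the main obstacle. Here I would exploit the symmetry $S^y$ together with the self-similarity under $f$: the idea is to select a sequence of angles $\theta_n\to0$ for which the geodesic, after winding $\Theta(1/\sin\theta_n)$ times and exiting the bridge, is funnelled down the cone and cascades through the lower bridges $k_{-1},k_{-2},\dots$, converging to $\o$. Away from the two gluing collars the surface $N_0$ is flat, so each such geodesic develops to a straight segment and its continuation is explicit; the whole difficulty is concentrated at the smoothed junctions between cone and cylinder, where the curvature lives. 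Since the smoothing can be taken symmetric under both $S^x$ and $S^y$ and supported in a thin collar, I would control the passage through a junction by a continuity/perturbation argument, and use an intermediate-value argument in $\theta$ to guarantee a sequence of tilts producing apex-bound (hence incomplete) trajectories whose windings still diverge.

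Granting this, each resulting geodesic $\c_n$ is incomplete, has unit initial vector in the fixed compact set $K$, and satisfies $\ll^{g_0}(\dot\c_n(0))\to+\infty$ (the life-time being at least of order $1/\sin\theta_n$, the arc-length already spent winding inside the bridge). Thus $\ll^{g_0}$ is unbounded on $\I^{g_0}\cap K$, and Proposition \ref{teim} shows that $D$ is not tame on $(N,c)$, which proves Proposition \ref{cong}.
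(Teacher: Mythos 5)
Your overall strategy --- produce incomplete geodesics with initial unit vectors in a fixed compact subset $K$ of $S(TN_0)$ but with life-times tending to infinity, then invoke Proposition \ref{teim} --- would indeed contradict tameness, and your preliminary observations are sound (the $k_n$ are closed geodesics as components of $\mathrm{Fix}(S^x)$, and a geodesic leaving $k_0$ at a small angle $\theta$ stays near $k_0$ for arc-length of order $1/\sin\theta$). But the proof has a genuine gap exactly where you locate it yourself: you never establish that any of these winding geodesics, let alone a sequence with $\theta_n\to 0$, is incomplete, i.e. converges to the singularity $\omega$. This cannot be waved away by ``continuity/perturbation'' plus an ``intermediate-value argument in $\theta$'': away from the two smoothed collars the surface is a flat cone, on which the apex-bound directions through a given point form a very thin set (on the unperturbed cone only the rays are incomplete), and you have identified no continuous quantity changing sign that would force an apex-bound trajectory to occur at prescribed, arbitrarily small tilts. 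Since complete geodesics contribute nothing to $\mathcal{L}^D$ restricted to $\mathcal{I}^D$, without this step the argument proves nothing.

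The paper sidesteps the problem by a different and much shorter mechanism. The point $P=P_n$ is fixed by both $S^x$ and $S^y$, hence by $S=S^x\circ S^y$, which acts as $-\mathrm{Id}$ on $T_PN_0$; therefore every geodesic through $P$ extends to a symmetric interval. Applied to a single geodesic $\gamma$ from $P$ to $\omega$, of some length $T$, this yields a geodesic defined on $(-T,T)$ \emph{both} of whose ends converge to $\omega$. The point $\gamma(T-\varepsilon)$ then lies at distance at most $\varepsilon$ from $\omega$ while supporting an incomplete half-geodesic of length $2T-\varepsilon$, so the function $\mu$ of Proposition \ref{tam} fails to be quasi-linear. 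Note that this mechanism is dual to yours: in the paper the life-times stay bounded while the distance to $\omega$ collapses, whereas you keep the base points in a compact set and try to make the life-times diverge. Either would suffice, but only the former is actually carried out; to salvage your route you would still need something like this reflection trick to manufacture the required incomplete geodesics.
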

\begin{proof} 
Consider the isometries $S^x$ and $S^y$ acting on $(N_0,g_0)$, whose
fixed point sets consist of unions of geodesics in $N_0$: \bi
\item $\mbox{Fix}(S^x)=N_0\cap\{x=0\}$, which is a union of two half
lines $c^+,c^-:(0,\infty)\ra N_0$, $c^\pm(t):=(0,\pm t,t)$ and an
infinity of circles $k_n:=\{(0,y,z)\ |\ y^2+(z-3\cdot
2^{n-1})^2=4^{n-2}\}$, $n\in\Z$;
\item $\mbox{Fix}(S^y)=N_0\cap\{y=0\}$, which is a union of closed
curves $\c_n$ connecting $f^n(B)$ with $f^{n+1}(B)$ and intersecting
$k_n$ in $P_n:=(0,0,7\cdot 2^{n-2})$, and $k_{n+1}$ in
$Q_{n+1}:=(0,0,3\cdot 2^{n-1})$.  \ei

We denote $P:=P_n$, for some positive integer $n$. The point $P$ is a
  fixed point for both isometries $S^x$ and $S^y$, and hence for their
  composition $S:=S^x\circ S^y$. The latter induces the map $X\mapsto
  -X$ on $T_PN_0$ and associates to a point $Q\in N_0$ the {\em
  geodesic reflection through $P$}, {\em i.e.} the point $\bar Q$ such that,
  for any geodesic $c^Q:(-\e,a]\ra N_0$ with $c^Q(a)=Q$ and
  $c^Q(0)=P$, $c^Q$ can be defined on a symmetric interval $[-a,a]$,
  and $\bar Q=c^Q(-a)$.

On the other hand, there exists a geodesic $\c:(-\e,T)\ra N_0$ such
that $\c(0)=P$ and $\c(t)$ tends to $\o=(0,0,0)$ as $t$ tends to
$T$. The remark above implies that the geodesic is actually defined on
$(-T,T)$ (and this is its maximal domain of definition), and
$$\lim_{t\ra T}\c(t)=\o=\lim_{t\ra -T}\c(t),$$ so both ends of the
incomplete geodesic $\c$ tend to the singularity.

For any $\e>0$, the point $\c(T-\e)$ can thus be connected by at least
two half-geodesics with $\o$, namely the two branches of $\c$, of
lengths $\e$ and $2T-\e$ respectively. As $\e$ can be chosen
arbitrarily small, we see that there is no bound for the ratios of
those lengths, therefore $N_0$ is not tame by the converse statement
in Proposition \ref{tam}.
\end{proof}

\vs

\subsection{Reducible, non-conformal, locally metric connections} We give here an
example of a non-conformal locally metric connection with reducible
holonomy which is not flat and not globally a product. 

\begin{ex} \label{exf}
 Let $(\tl M,\tl
g):=(\tl M_1,\tl g_1)\times (\tl M_2,\tl g_2)$, where
$$(\tl M_1,\tl g_1):=(\R^*_+\times S^n, dr^2+r^2 g_1)$$
is the Riemannian cone over a sphere endowed with a non-round metric
$g_1$, and $(\tl M_2,\tl g_2):=(\R,dt^2)$ is just a line.
We define now
$$\gamma_j(r,x,t):=(e^{a_j}r,x,t+ja_j),\ j=1,2,$$
where $a_1$ and $a_2$ are real numbers such that $a_1.a_2>0$ and $a_1/a_2\not\in\Q$. We see that $\gamma_j$
act by affine transformations of $(\tl M,\tl g)$. On the other hand,
the group $\Gamma$, generated by $\gamma_1$ and $\gamma_2$ is Abelian and
acts freely on $\tl M$. One can also check that this action is proper,
thus $M:=\tl M/\Gamma$ is a manifold that inherits the Levi-Civita
connection $D$ of $\tl g$ but  $D$ does not preserve any
Riemannian metric on $M$. Moreover, the $D$-stable distributions (note
that $(\tl M_1,\tl g_1)$ is an irreducible Riemannian manifold, because
$g_1$ is not the round metric) on $M$ generate transversal foliations,
but no global product structure.
\end{ex}

\subsection{Open problems}\label{55} Several natural questions emerge
from the 
considerations above. 
 \bi
\item Theorem \ref{t1} shows that if $D$ is a closed Weyl connection on
  a compact conformal manifold $(M,c)$, then the minimal Riemannian
  cover $(\mm,g_0)$ can be metrically completed by adding exactly one
  point. The metric completion of its universal covering $(\t M,\t g)$
  is, however, not well understood: The {\em boundary} of
  $\t M$ in its metric completion may be more complicated in general,
  possibly depending on the growth of the fundamental group of $M$.
\item One can check that the stability condition forces the Lee 
form $\theta$ to be non-vanishing, therefore restricting the topology of 
$\mm$ to products $\R\times N$, in particular $\chi(M)=0$. Does the tameness 
condition also imply a topological restriction? And if this restriction 
is satisfied, is the connection automatically tame?  
\item Ultimately, does
  Theorem \ref{main} hold without the tameness assumption? 
\ei
A positive answer to this last question would be equivalent to Conjecture \ref{conj}. Note that the only crucial place where tameness is used is Proposition \ref{complete}, which ensures the existence of sufficiently many complete leaves on a reducible cone-like manifold. 

\end{document}